\newtheorem{theorem}{Theorem}[section]
\newtheorem{proposition}[theorem]{Proposition}
\newtheorem{lemma}[theorem]{Lemma}
\theoremstyle{definition}
\newtheorem{definition}[theorem]{Definition}
\numberwithin{equation}{section}
\newcommand{\be}{\begin{equation}}
\newcommand{\bel}[1]{\begin{equation}\label{#1}}
\newcommand{\ee}{\end{equation}}
\newcommand{\BA}{\begin{array}}
\newcommand{\EA}{\end{array}}
\newcommand{\BAL}{\begin{aligned}}
\newcommand{\EAL}{\end{aligned}}
\newcommand{\Proof}{\note{Proof}}
\newcommand{\note}[1]{\noindent\textit{#1.}\hskip 2mm}
\newcommand{\forevery}{\quad \forall}
\newcommand{\abs}[1]{\left |#1\right |}
\newcommand{\norm}[1]{\left \|#1\right \|}
\newcommand{\dist}{\mathrm{dist}\,}
\newcommand{\sign}{\mathrm{sign}}
\newcommand{\q}{\quad}
\newcommand{\qq}{\qquad}
\newcommand{\prt}{\partial}
\newcommand{\sms}{\setminus}
\newcommand{\ti}{\times}
\newcommand{\tl}{\tilde}
\newcommand{\sbs}{\subset}
\newcommand{\ity}{\infty}
\def\ga{\alpha}     \def\gb{\beta}       \def\gg{\gamma}
       \def\gd{\delta}      \def\ge{\epsilon}
\def\gth{\theta}                         \def\vge{\varepsilon}
\def\gf{\phi}       \def\vgf{\varphi}    
      \def\gk{\kappa}      \def\gl{\lambda}
\def\gm{\mu}        \def\gn{\nu}         \def\gp{\pi}
\def\gs{\sigma}       \def\gt{\tau}
      \def\gw{\omega}
                \def\gz{\zeta}
     \def\Gd{\Delta}      
\def\Gl{\Lambda}    \def\Gs{\Sigma}      
\def\Gw{\Omega}              
\def\CS{{\mathcal S}}      
   \def\CO{{\mathcal O}}   
      \def\CF{{\mathcal F}}
   \def\CK{{\mathcal K}}   \def\CL{{\mathcal L}}
\def\BBA {\mathbb A}       
\def\BBG {\mathbb G}   \def\BBH {\mathbb H}    
   \def\BBK {\mathbb K}    
   \def\BBN {\mathbb N}    
   \def\BBR {\mathbb R}
\def\GTM {\mathfrak M}
\def\tr{\mathrm{tr\,}}
\def\div{\mathrm{div}}
\newcommand{\ei}{\phi_{\xm }}
\newcommand{\xa}{\alpha}
\newcommand{\xb}{\beta}
\newcommand{\xg}{\gamma}
\newcommand{\xG}{\Gamma}
\newcommand{\xd}{\delta}
\newcommand{\xD}{\Delta}
\newcommand{\xe}{\varepsilon}
\newcommand{\xk}{\kappa}
\newcommand{\xl}{\lambda}
\newcommand{\xL}{\Lambda}
\newcommand{\xm}{\mu}
\newcommand{\xn}{\nu}
\newcommand{\xs}{\sigma}
\newcommand{\xf}{\phi}
\newcommand{\xo}{\omega}
\newcommand{\xO}{\Omega}
\begin{document}

\title[Semilinear elliptic equations with Hardy potential]{Semilinear elliptic equations with Hardy potential and gradient nonlinearity}
\author{Konstantinos Gkikas}
\address{Konstantinos T. Gkikas, Department of Mathematics, National and Kapodistrian University of Athens, 15784 Athens, Greece}
\email{kugkikas@gmail.com}
\author{Phuoc-Tai Nguyen}
\address{P. T. Nguyen, Department of Mathematics ans Statistics, Masaryk University, Brno, Czech Republic.}
\email{ptnguyen@math.muni.cz}


\maketitle

\tableofcontents
\begin{abstract}
	Let $\Omega \subset {\mathbb R}^N$ ($N \geq 3$) be a $C^2$ bounded domain and $\delta$ be the distance to $\partial \Omega$. We study positive solutions of equation (E) $-L_\mu u+ g(|\nabla u|) = 0$ in $\Omega$ where $L_\mu=\Delta + \frac{\mu}{\delta^2} $, $\mu \in (0,\frac{1}{4}]$ and $g$ is a continuous, nondecreasing function on ${\mathbb R}_+$. We prove that if $g$ satisfies a singular integral condition then there exists a unique solution of (E) with a prescribed boundary datum $\nu$. When $g(t)=t^q$ with $q \in (1,2)$, we show that equation (E) admits a critical exponent $q_\mu$ (depending only on $N$ and $\mu$). In the subcritical case, namely $1<q<q_\mu$, we establish some a priori estimates and provide a description of solutions with an isolated singularity on $\partial \Omega$. In the supercritical case, i.e. $q_\mu\leq q<2$, we demonstrate a removability result in terms of Bessel capacities.

	\medskip
	
	\noindent\textit{Key words:}  Hardy potential, Martin kernel,  boundary trace, critical exponent, gradient term, isolated singularities, removable singularities.
	
	\medskip
	
	\noindent\textit{2000 Mathematics Subject Classification:} 35J60, 35J75, 35J10, 35J66.
	
\end{abstract}

\section{Introduction}

In this paper, we are concerned with the boundary value problem with measure data for the following equation
\bel{N} - \Gd u - \frac{\gm}{\gd^2}u + g(|\nabla u|) = 0 \ee
in a $C^2$ bounded domain $\Gw$ in $\BBR^N$ ($N \geq 3$), where  $\gm \in (0,\frac{1}{4}]$, $\gd(x)=\gd_\Gw(x):=\dist(x,\prt \Gw)$ and $g: \BBR_+ \to \BBR_+$ is a nondecreasing, continuous function with $g(0)=0$. Put
\begin{equation}\label{L}
  L_\mu=L_\mu^\Gw:=\Gd+\frac{\mu}{\gd^2}.
\end{equation}
We say that $u$ is $L_\mu$ harmonic (resp. subharmonic, superharmonic) if $u$ is a distributional solution (resp. subsolution, supersolution) of
\bel{LE} -L_\mu u = 0 \quad \text{in } \Gw. \ee

When $\mu=0$, $L_\mu \equiv \Gd$ and equation \eqref{N} becomes
\bel{mu=0} -\Gd u + g(|\nabla u|) = 0 \quad \text{in } \Gw.
\ee
The boundary value problem with measure data for \eqref{mu=0} was first studied by Nguyen and V\'eron in \cite{NgVe1} where the existence of a positive solution with a prescribed measure boundary datum was obtained under a so-called subcriticality integral condition on $g$. In case that $g$ is a purely power function, i.e. $g(t)=t^q$, it was shown that equation \eqref{mu=0} admits the \textit{critical exponent} $q^*=\frac{N+1}{N}$ and the structure of the class of solutions with a boundary isolated singularity was fully depicted in the subcritical case $q \in (1,q^*)$. These results were then extended to a much more intricate equations where the nonlinearity depends on both solutions and their gradient (see \cite{MaNg2,Ng1}). An attempt to extend the mentioned results was carried out in \cite{BGV} (see also the references therein) to the case of $p$-laplacian where the analysis is complicated and requires heavy computations due to the nonlinearity of the operator.

The case $\mu<0$ (and the case of more general potentials) was investigated by Ancona in \cite{An2}. 

When $\mu>0$, the semilinear equation with absorption power term
\bel{SLE} -L_\mu u + u^p = 0 \quad \text{in } \Gw \ee
has been recently studied in different directions. If $\mu \leq \frac{1}{4}$, Bandle et al. \cite{BMR} gave a classification of large solutions, i.e. positive solutions of \eqref{SLE} which blow up on $\prt \Gw$, according to their boundary behavior, in connection to the exponent
\bel{alpha} \ga:=\frac{1}{2} + \sqrt{\frac{1}{4}-\gm} \ee
and the Hardy constant
\bel{Hardy} C_H(\Gw):= \inf_{H^1_0(\Gw)} \frac{\int_\Gw |\nabla u|^2dx}{\int_\Gw (u/\gd)^2dx}.
\ee

Afterwards, Marcus and Nguyen dealt with \textit{moderate solutions}  of \eqref{LE} and \eqref{SLE} by introducing a concept of \textit{normalized boundary trace} (see \cite[Definition 1.2]{MaNg}). An advantage of this notion is that it allows to overcome the difficulty originating from the presence of the Hardy potential $\frac{\mu}{\gd^2}$ and hence enables to characterize $L_\mu$ harmonic functions in terms of their boundary behavior. 

This notion of boundary trace was then extended by Marcus and Moroz in \cite{MaMo} to the case $\mu<\frac{1}{4}$ due to the fact that in this case there exists a local $L_\mu$ superharmonic function in a neighborhood of $\prt \Gw$ and then it was used to study the nonlinear problem \eqref{SLE}. See also \cite{BMM} and references therein.

In parallel, Gkikas and V\'eron \cite{GkV} treated the boundary value problem for \eqref{LE} and \eqref{SLE} in a slightly different setting, giving a complete description of singular solutions of \eqref{SLE} by introducing a notion of boundary trace in a \textit{dynamic way} which is recalled below.

Let $D \Subset \Gw$ and $x_0\in D.$ If $h\in C(\prt D)$ then the following problem
\bel{sub} \left\{ \BAL
-L_{\xm }u &=0\qquad&&\text{in } D, \\
u &=h\qquad&&\text{on } \prt D,
\EAL \right. \ee
admits a unique solution which allows to define the $L_{\xm }$ harmonic measure $\xo_D^{x_0}$ on $\prt D$
by
\bel{redu2}
u(x_0)=\int_{\prt D}{}h(y)d\gw^{x_0}_{D}(y).
\ee
A sequence of domains $\{ \Gw_n \}$ is called a \textit{smooth exhaustion} of $\Gw$ if $\prt \Gw_n \in C^2$, $\overline{\xO_n}\subset \xO_{n+1}$, $\cup_n\xO_n=\xO$ and $\mathcal{H}^{N-1}(\prt \xO_n)\to \mathcal{H}^{N-1}(\prt \xO)$. For each $n$, let $\xo_{\Gw_n}^{x_0}$ be the $L_{\gm }^{\Gw_n}$ harmonic measure on $\prt\Gw_n.$

\begin{definition} \label{nomtrace}
	Let $ \mu \in (0,\frac{1}{4}]$. A function $u$ possesses a \emph{boundary trace}  if there exists a measure $\gn\in\GTM(\prt\Gw)$ such that for any smooth exhaustion $\{ \Gw_n \}$ of $\xO$, there  holds
	\be\label{trab}
	\lim_{n\rightarrow\infty}\int_{ \partial \xO_n}\gf u\, d\xo_{\Gw_n}^{x_0}=\int_{\partial \xO} \gf \,d\xn \quad\forall \gf \in C(\overline{\xO}).
	\ee
	The boundary trace of $u$ is denoted by $\tr(u)$.
\end{definition}
It was showed in \cite{GkNg} that when $\mu \in (0,C_H(\Gw))$ the notion of boundary trace in Definition \ref{nomtrace} coincides with the notion of normalized boundary trace introduced in \cite{MaNg}. Since we would like to deal with the whole range $(0,\frac{1}{4}]$, we will employ Definition \ref{nomtrace}. However, we need an additional condition as follows:
\bel{eig}
\gl_\mu:=\inf_{\vgf \in H_0^1(\Gw) \setminus \{ 0\} }\frac{\int_{\Gw}(|\nabla \vgf|^2 -\frac{\mu}{\gd^2}\vgf^2)dx  }{\int_{\Gw}\vgf^2 dx}>0.
\ee

\medskip

{\bf Throughout the present paper, we assume that $\mu \in (0,\frac{1}{4}]$ and \eqref{eig} holds}. Under this condition, the Representation Theorem (see \cite{MaNg,GkV}) is valid, which allows to develop a theory for linear equations (see \cite{GkNg}).

Related results for semilinear elliptic equations with Hardy potential and source term can be found in \cite{BHNV,Ng2}.

For $\phi\geq 0$, denote by $\GTM(\Gw,\phi)$ the space of Radon measures $\tau$ on $\Gw$ satisfying $\int_{\Gw}\phi\, d |\tau|<\infty$ and by $\GTM^+(\Gw,\gf)$ the positive cone of $\GTM(\Gw,\gf)$. Denote by $\GTM(\prt \Gw)$ the space of bounded Radon measures on $\prt \Gw$ and by $\GTM^+(\prt \Gw)$ the positive cone of $\GTM(\prt \Gw)$. Denote  $L^p_w(\Gw,\tau)$, $1 \leq p < \infty$, $\tau \in \GTM^+(\Gw)$, the weak $L^p$ space (or Marcinkiewicz space) with weight $d\gt$; see \cite{MVbook} for more details.

Let $G_\mu^\Gw$ and $K_\mu^\Gw$ be respectively the Green kernel and Martin kernel of $-L_\mu$ in $\Gw$ (see Section 2.2 for more details). The Green operator and Martin operator are defined as follows:
\bel{Gm} \BBG_\mu^\xO[\tau](x):=\int_{\Gw}G_\mu^\xO(x,y)d\tau(y) \forevery \tau \in \GTM(\Gw,\gd^{\ga}), \ee
\bel{Km} \BBK_\mu^\xO[\nu](x): = \int_{\prt \Gw}K_\mu^\xO(x,z)d\nu(z) \forevery \nu \in \GTM(\prt \Gw). \ee

These operators play an important role in the study of the boundary value problem for the linear equation
\begin{equation}\label{NHL} \left\{ \BAL
- L_\gm u&=\tau\qquad \text{in }\;\Gw,\\
\tr(u)&=\xn.
\EAL \right. \end{equation}

\begin{definition} \label{solLinear} Let $(\tau,\xn)\in \GTM(\Gw,\gd^{\ga})\times \GTM(\prt \Gw)$. We say that $u$ is a \textit{weak solution} of \eqref{NHL} if $u \in L^1(\Gw,\gd^\ga)$ and
	\bel{lweakform}
	- \int_{\xO}u L_{\xm }\zeta \, dx=\int_\xO \zeta \, d\tau - \int_{\Gw} \mathbb{K}_{\xm}^\xO[\xn]L_{\xm }\zeta \, dx
	\qquad\forall \zeta \in\mathbf{X}_\xm(\xO),
	\ee
	where the space of test function ${\bf X}_\mu(\Gw)$ is defined by
	\bel{Xmu} {\bf X}_\mu(\Gw):=\{ \zeta \in H_{loc}^1(\Gw): \gd^{-\ga} \zeta \in H^1(\Gw,\gd^{2\ga}), \, \gd^{-\ga}L_\mu \zeta \in L^\infty(\Gw)  \}.
	\ee
\end{definition}
The existence and uniqueness result for \eqref{NHL}, which was established in \cite{GkNg}, is an important ingredient in the investigation of the boundary value problem for \eqref{N} 
\bel{BP} \left\{ \BAL
-L_\mu u+g(|\nabla u|) &=0 \quad \text{in } \Gw, \\
\tr(u)&=\nu.
\EAL \right. \ee

We reveal that the presence of the Hardy potential $\frac{\mu}{\gd^2}$  in the linear part of the equation means that the problem cannot be handled via classical elliptic PDEs methods as the singularity of the potential at the boundary is too strong. Moreover, the presence of the gradient term, which leads to the lack of monotonicity property of the nonlinearity, makes the analysis much intricate. The interplay between the Hardy potential $\frac{\mu}{\gd^2}$ and the gradient term yields substantial new difficulties and requires new methods.

Before stating main results of the paper, let us give the definition of weak solutions of \eqref{BP}.

\begin{definition} \label{weaksol}  Let $\nu \in \GTM(\prt \Gw)$.  A function $u$ is called a  \emph{weak solution} of \eqref{BP} if $u\in L^1(\Gw,\gd^{\ga})$, $g (|\nabla u|) \in L^1(\Gw,\gd^{\ga})$ and
	\bel{intform}- \int_{\Gw} u L_\gm\zeta \, dx + \int_{\Gw} g (|\nabla u|) \zeta \, dx = - \int_{\Gw}\BBK_\gm[ \gn] L_\gm \zeta \, dx, \quad \forall \zeta \in{\mathbf X_\xm}(\Gw).
	\ee
\end{definition}

Crucial ingredients in the study of \eqref{BP} are estimates of $\nabla \BBG_\mu^\Gw$ and $\nabla \BBK_\mu^\Gw$, which are established in the next proposition. \medskip

\noindent \textbf{Proposition A.} {\it
(i) Let $\gth \in [0,\ga]$ and $\xg\in[0,\frac{\theta N}{N-1})$. Then there exists a positive constant $c=c(N,\mu,\gth,\gg,\Gw)$ such that
\bel{gradG3} \| \nabla \BBG_\mu^\Gw[|\gt|] \|_{L_w^{\frac{N+\xg}{N+\theta-1}}(\Gw,\gd^\xg)} \leq c \| \gt   \|_{\GTM(\Gw,\gd^\gth)} \quad \gt \in \GTM(\Gw,\gd^\gth),
\ee
where
\bel{Gmgr} \nabla\BBG_\mu^\xO[\tau](x):=\int_{\Gw}\nabla_xG_\mu^\xO(x,y)d\tau(y). \ee

(ii) Let $\xg\geq0$.
Then there exists a positive constant $c=c(N,\mu,\gg,\Gw)$ such that
\bel{gradP2} \| \nabla \BBK_\mu^\Gw[|\gn|] \|_{L_w^{\frac{N+\xg}{N+\xa-1}}(\Gw,\gd^\gg)} \leq c \| \gn   \|_{\GTM(\prt \Gw)} \quad \gn \in \GTM(\prt \Gw),
\ee
where
\bel{Kmgr} \nabla\BBK_\mu^\xO[\nu](x): = \int_{\prt \Gw}\nabla_xK_\mu^\xO(x,z)d\nu(z). \ee
}

A main feature of problem \eqref{BP} is that, in general, it is not solvable for any measure $\nu \in \GTM(\prt \Gw)$. This occurs only when $q$ is smaller than the \textit{critical exponent} given by
$$q_\mu:=\frac{N+\ga}{N+\ga-1}.$$

\noindent \textbf{Theorem B.} {\it
(Existence)\, Assume that $g: \BBR_+ \to \BBR_+$ is continuous, nondecreasing and satisfies
\begin{equation}\label{G1} \tag{g$_1$}
\int_{1}^{\infty}g(s)s^{-1-q_\mu}ds<\infty.
\end{equation}
Then for any $\xn\in\mathfrak M^+(\prt\Gw)$ problem \eqref{BP} admits a nonnegative weak solution
$u=u_{\gn}$. Moreover,
\bel{uGK} u + \BBG_\mu^\Gw[g(|\nabla u|)] = \BBK_\mu^\Gw[\nu]. \ee
}

Let us briefly discuss the idea of the proof. Because of the presence of the Hardy potential, we first construct a solution of \eqref{N} in a subdomain $D \subset \subset \Gw$ due to a combination of the idea in \cite{Ka} and the Schauder fixed point theorem. This result is used to obtain the existence of an approximate solution of the equation with truncated nonlinearity in the whole domain $\Gw$. Finally, we employ  Proposition A and Vitali convergence theorem in the limit process to derive the existence of a solution of \eqref{BP}.

A combination of \eqref{uGK} and Schr\"odinger theory (see \cite{GkV,MaNg}  and references therein) asserts that any weak solution of \eqref{BP} behaves like $\BBK_\mu[\nu]$ on $\prt \Gw$.  \medskip

\noindent \textbf{Proposition C.} {\it (Boundary behavior)
Let $\nu \in \GTM^+(\prt \Gw)$. If $u$ is a nonnegative weak solution of \eqref{BP} then
\bel{nontang} \lim_{x \to y}\frac{u(x)}{\BBK_\mu^\Gw[\nu](x)}= 1 \quad \text{non tangentially},\, \text{for } \nu\text{-a.e. } y \in \prt \Gw.
\ee
}

Following is the monotonicity result which clearly implies the uniqueness the solution of \eqref{BP}.

\noindent \textbf{Theorem D.} {\it (Monotonicity)
Assume  that $g: \BBR_+ \to \BBR_+$ is continuous, nondecreasing and satisfies
	\bel{G2} \tag{g$_2$} |g(t)-g(t')| \leq C(|t|^{q-1}+|t'|^{q-1})|t -t'| \quad \forall t, t' \geq 0,\ee
	for some $q \in (1,q_\mu)$ and
	\bel{G3} \tag{g$_3$}
	g(\xe t) \leq \xe g(t)\quad\forall t>0\quad\text{and }\;\forall \xe\in (0,1].
	\ee
Assume $\nu_i \in \GTM^+(\prt \Gw)$, $\nu_1 \leq \nu_2$ and $u_i$ be a nonnegative solution of \eqref{BP} with $\nu=\nu_i$, $i=1,2$. If $\nu_1 \leq \nu_2$ then $u_1 \leq u_2$ in $\Gw$.		
} \medskip

Note that the classical method can not be applied to our setting because of the lack of monotonicity stemming from the presence of the gradient term and the fact that a constant is not a solution of \eqref{BP}. To overcome the difficulties,
we develop a new method which is based on an estimates on the gradient of subsolutions, Kato's inequality and a comparison principle in a subdomain of $\Gw$.

When $g(t)=t^q$ with $q \in (1,q_\mu)$ then $g$ satisfies \eqref{G1}--\eqref{G3}. In this case, the class of solutions with isolated singularity has an interesting structure which is exploited below.

Put
$$W(x):=\left\{ \BAL &\gd(x)^{1-\ga} \qquad&& \text{if } \xm <\frac{1}{4}, \\
&\gd(x)^{\frac{1}{2}} |\ln \gd(x)| &&\text{if } \xm =\frac{1}{4}.
\EAL \right.
$$

Assume that $0 \in \prt \Gw$. The following proposition provides universal pointwise estimates on solutions with isolated singularity at $0$, as well as their gradient. The proof is obtained thanks to the barrier constructed in the Appendix and the scaling argument in \cite{MVbook}. \medskip

\noindent \textbf{Proposition E.} {\it (A priori estimates)
Assume $0\in\partial\xO$ and let $u$
be a positive solution of \eqref{N} in $\xO,$ with $g(t)=t^q,$ such that
\bel{lama} \lim_{x\in\xO,\;x\rightarrow\xi}\frac{u(x)}{W(x)}=0\qquad\forall \xi\in\partial\xO\setminus \{0\},
\ee
locally uniformly in $\partial\xO\setminus \{0\}$. Then there exists a constant $C=C(N,\mu,q,\Gw)$ such that,
\bel{3.4.24}
u(x)\leq C\gd(x)^{\ga}|x|^{-\frac{2-q}{q-1}-\ga}\qquad\forall x\in \xO,
\ee
\bel{3.4.24*}
|\nabla u(x)|\leq C\gd(x)^{\ga-1}|x|^{-\frac{2-q}{q-1}-\ga}\qquad\forall x\in \xO.
\ee		
}

In case that the boundary trace is a Dirac measure concentrated at $0$, a shaper estimates can be achieved, which is the content of the following theorem. \medskip

\noindent \textbf{Theorem F.} {\it (Weak singularity)
Assume $1< q < q_\mu$. Let $0 \in\prt\Gw$ and $k > 0$.  Let $u_{0,k}^\Gw$ be the solution of
\bel{Pc} \left\{ \BAL -L_\mu u + \abs{\nabla u}^q  &= 0 \quad \text{in } \Gw \\
\tr(u) &= k\gd_0,
\EAL \right. \ee
where $\gd_y$ is the Dirac measure concentrated at $0$. Then
\bel{BA.1} \lim_{x \to y}\frac{u_{0,k}^\Gw(x)}{K_\mu^\Gw(x,0)}=k. \ee
Furthermore the mapping $k\mapsto u_{0,k}^\Gw$ is increasing.
}

From Theorem F, it is natural to analyze the behavior of $\lim_{k \to \infty}u_{0,k}^\Gw$. This task consists of some intermediate steps. The first one is to consider a separable solution of \eqref{N} in the case $\Gw=\BBR_+^N$ and then to translate equation \eqref{N} to an equation on the upper hemisphere
$$S_+^{N-1}=S^{N-1} \cap \BBR_+^N=\left\{(\sin\gf\gs',\cos\gf):\gs'\in S^{N-2},\gf\in [0,\frac{\gp}{2})\right\}.
$$
The second one is to investigate the existence and uniqueness of the  corresponding problem on $S_+^{N-1}$; at this step the exact behavior of $\lim_{k \to \infty}u_{0,k}^{\BBR_+^N}$ can be derived. In the last step, the scaling argument is employed to obtain the behavior of $\lim_{k \to \infty}u_{0,k}^\Gw$. These steps are described in more details below.

We denote by $x=(r,\gs) \in \BBR_+ \ti S^{N-1}$, with $r=|x|$ and $\gs=r^{-1}x$, the spherical coordinates in $\BBR^N $ and we recall the following representation
$$\nabla u=u_r{\bf e}+\frac{1}{r}\nabla' u, \quad \Gd u=u_{rr}+\frac{N-1}{r}u_r+\frac{1}{r^2}\Gd' u
$$
where $\nabla' $ denotes the covariant derivative on $S^{N-1}$ identified with the tangential derivative and $\Gd'$ is the Laplace-Beltrami operator on $S^{N-1}$.

We look for  a particular solution of
\bel{P3} \left\{ \BAL
- L_\mu u + \abs{\nabla u}^q  &= 0 \quad &&\text{ in } \BBR^N_+ \\
u &= 0 &&\text{ on } \prt \BBR^N_+\sms\{0\}=\BBR^{N-1}\sms \{0\}
\EAL \right. \ee
under the separable form
\bel{Sep-sol} u(x)=u(r,\gs)=r^{-\frac{2-q}{q-1}}\gw(\gs) \qq (r,\gs) \in (0,\ity)\ti S_+^{N-1}.  \ee
It follows from a straightforward computation that $\gw$ satisfies
\bel{P4} \left\{ \BAL
-\CL_\mu \gw - \ell_{N,q}\gw + J(\gw,\nabla' \gw) &= 0 \q &&\text{in } S_+^{N-1} \\
\gw &= 0 &&\text{on } \prt S_+^{N-1}
\EAL \right. \ee
where
\bel{CLmu} \BAL \CL_{\xm }\gw &:= \Gd'\gw+\frac{\xm }{ ({\bf e}_N \cdot\gs)^2}w,\qquad
\ell_{N,q} :=\frac{2-q}{q-1}\Big(\frac{q}{q-1}-N\Big), \\
J(s,\xi)&:=\left( \Big(\frac{2-q}{q-1}\Big)^2 s^2 + |\xi|^2\right)^{\frac{q}{2}} \quad (s,\xi)\in\mathbb{R}\times\mathbb{R}^{N}.
\EAL \ee
where ${\bf e}_N$ is the unit vector pointing toward the North pole.
 
Let $\gk_\mu=\xa(N+\xa-2)$ be the first eigenvalue of $-\CL_\mu$ in $S_+^{N-1}$ and $\gf_\mu$ be the corresponding eigenfunction $\gf_\mu(\gs)=({\bf e}_N \cdot \gs)^\ga$ for $\gs \in S_+^{N-1}$. Denote
\bel{Y} {\bf Y}_\mu(S^{N-1}_+):=\{\gf\in H^1_{loc}(S^{N-1}):\xf_0^{-\xa}\phi\in H^1(S^{N-1}_+,\xf_0^{2\ga})\}. \ee

It is asserted below that $q_\mu$ is a critical exponent for the existence of a positive solution of \eqref{P4}. \medskip

\noindent \textbf{Theorem G.} {\it
	(i) If $q \geq q_\mu$ then there exists no nontrivial solution of \eqref{P4}.
	
	(ii) If $1<q<q_\xm$ then problem \eqref{P4} admits a unique positive solution $\xo\in{\bf Y}_\mu(S^{N-1}_+)$. Moreover,
	\bel{estsphere} \BAL
	\xo(\xs)&\leq \left(\frac{\ell_{N,q}-\gk_\xm}{\xa^q}\right)^{\frac{1}{q-1}}\xf_\xm(\xs) \quad\forall\xs\in S^{N-1}_+,\\
	|\nabla' \xo(\xs)|&\leq C\xf_\xm(\xs)^{\frac{\xa-1}{\xa}} \quad\forall\xs\in S^{N-1}_+.\\
	\EAL \ee
where $C=C(N,q,\mu)$.
}

Denote $u_{0,\infty}^\Gw:=\lim_{k \to \infty}u_{0,k}^\Gw$. A combination of Proposition E, Theorem F and Theorem G ensures that $u_{0,\infty}^\Gw$ is a solution of \eqref{N}. Moreover,  $u_{0,\infty}^\Gw$ possesses richer properties as stated in the following theorem. \medskip

\noindent \textbf{Theorem H.} {\it (Strong singularity)
	Assume  $0 \in \prt \Gw$ and $1< q < q_\mu$. Let $u_{0,\infty}^\Gw$ be defined as above. Then $u_{0,\infty}^\Gw$ is a solution of
	\bel{uinfty} \left\{ \BAL -L_\mu u + |\nabla u|^q &=0 \quad &&\text{in } \Gw, \\
	u &= 0 &&\text{on } \prt \Gw \sms \{0\}.
	\EAL\right.
	\ee
	There exists a constant $c=c(N,\mu,q,\Gw)>0$ such that
	\bel{uinf2side}
	c^{-1}\gd(x)^\ga |x|^{-\frac{2-q}{q-1}-\ga} \leq u_{0,\infty}^\Gw(x) \leq c \gd(x)^\ga |x|^{-\frac{2-q}{q-1}-\ga} \quad \forall x \in \Gw,
	\ee
	\bel{gradinf}
	|\nabla u_{0,\infty}^\Gw(x)| \leq c\gd(x)^{\ga-1}|x|^{-\frac{2-q}{q-1}-\ga} \quad \forall x \in \Gw.
	\ee
	Moreover
	\bel{uniq7}
	\lim_{\tiny\BA{c}\Gw \ni x\to 0\\
		\frac{x}{|x|}=\gs\in S^{N-1}_+
		\EA}|x|^{\frac{2-q}{q-1}}u_{0,\infty}^\Gw(x)=\gw(\gs),
	\ee
	locally uniformly on $S^{N-1}_+$, where $\gw$ is the unique solution of \eqref{P4}.	
} \medskip

We next consider the supercritical case, i.e. $q \geq q_\mu$. For any Borel  set $E \subset \BBR^{N-1}$, we denote by $C_{\ga,p}^{\BBR^{N-1}}(E)$ the Bessel capacity of $E$ associated to the Bessel space $L_{\ga,p}(\BBR^N)$ (see Section 6 for more details).

\begin{definition}
	Let $\xn\in\mathfrak M^+(\prt\Gw).$ We will say that $\xn$ is absolutely continuous with respect to the Bessel capacity $C^{\BBR^{N-1}}_{\frac{\xa+1}{q}-\xa,q'}$ where $q'=\frac{q}{q-1}$, if
	\begin{equation}\label{N3}\BA {lll}
	\forall E\subset\prt\Gw,\,E\text { Borel }, C^{\BBR^{N-1}}_{\frac{\xa+1}{q}-\xa,q'}(E)=0\Longrightarrow \xn(E)=0.
	\EA\end{equation}
\end{definition}

\noindent \textbf{Theorem I.} {\it (Absolute continuity)
	Assume $q_\xm\leq q<2$ and $\xn \in \GTM^+(\prt \Gw)$ such that the problem
	\bel{N1} \left\{ \BAL -L_\mu u + \abs{\nabla u}^q  &= 0 \quad \text{in } \Gw \\
	\tr(u) &= \nu.
	\EAL \right. \ee
has a solution. Then
	
	(i) If $q\neq\xa+1$ then $\xn$ is absolutely continuous with respect to $C^{\BBR^{N-1}}_{\frac{\xa+1}{q}-\xa,q'}.$
	
	(ii) If $q=\xa+1$ then for any $\xe\in(0,\min\{\xa+1,\frac{(N-1)\xa}{\xa+1}-(1-\xa)\}),$ $\xn$ is absolutely continuous with respect to $C^{\BBR^{N-1}}_{\xe+1-\xa,\frac{\xa+1}{\xa}}.$\label{theorima1}
}

\noindent \textbf{Theorem J.} {\it (Removability)
	Assume $q_\xm \leq q < 2.$ Let $K\subset\partial\xO$ be compact such that
	
	(i) $C^{\BBR^{N-1}}_{\frac{\xa+1}{q}-\xa,q'}(K)=0$ if $q\neq\xa+1$
	
	or
	
	(ii) $C^{\BBR^{N-1}}_{\xe+1-\xa,q'}(K)=0,$ for some $ \xe\in (0,\min\{\xa+1,\frac{(N-1)\xa}{\xa+1}-(1-\xa)\}),$ if $q=\xa+1.$
	
	\noindent Then any nonnegative solution $u \in C^2(\xO)\cap C(\overline{\xO}\setminus K)$ of
	\bel{problem}
	-L_\xm u+|\nabla u|^{q}=0\quad\text{in }\Gw.
	\ee
	 such that
	\bel{lamab} \lim_{x\in\xO,\;x\rightarrow\xi}\frac{u(x)}{W(x)}=0\qquad\forall \xi\in\partial\xO\setminus K,
	\ee
	is identically zero.
}

The paper is organized as follows. In Section 2, we recall main properties of the boundary trace  and some facts about linear equations. In Section 3, we  establish estimates of the gradient of solutions in weak $L^p$ spaces (see Proposition A). Section 4 is devoted to the proof of Theorem B, Proposition C and Theorem D. Moreover, in this section, we also provide  some estimates of solutions of \eqref{N}. In Section 5, we demonstrate Proposition E and Theorems F, G and H. In Section 6 we deal with the supercritical case and provide the proof of Theorems I and J. Finally, in Appendix we construct a barrier for solutions of \eqref{N} which serves to obtain Proposition E. \medskip

\noindent \textbf{Notation.} In what follows the notation $f \approx g$ means: there exists a positive constant $c$ such that $c^{-1}f < g < cf$ in the domain of the two functions or in a specified  subset of this domain. Of course, in the later case, the constant depends on the subset. 

For $\gb>0$, put
\bel{Gwbeta} \Gw_\gb:=\{x \in \Gw: \gd(x)<\gb \},\; D_\gb:=\{x \in \Gw: \gd(x)>\gb\}, \; \Gs_\gb:=\{x \in \Gw:
\gd(x)=\gb\}.\ee
\medskip

\noindent{\bf Acknowledgements.} The authors are grateful to Professor L. V\'eron  for his useful comments.

\section{The linear problem}
\subsection{Eigenvalue and eigenfunction} Throughout the paper we assume that $\xm\in(0,\frac{1}{4}]$ and \eqref{eig} holds.

We recall important facts of the eigenvalue $\gl_\mu$ of $-L_\mu$ and the associated eigenfunction $\vgf_\mu$ which can be found in \cite {FMT}.

If $0<\xm <\frac{1}{4}$ then the minimizer $\vgf_{\xm } \in H_0^1(\Gw)$ of \eqref{eig} exists and satisfies
\be\label{Lin1}
\vgf_{\mu }\approx \gd^\ga,
\ee
where $\ga$ is defined by \eqref{alpha}. \smallskip

 If $\xm =\frac{1}{4}$, there is no minimizer of \eqref{eig} in $H_0^1(\Gw)$, but there exists a nonnegative function $\vgf_{\frac{1}{4}}\in H_{loc}^1(\xO)$  such that
\be\label{Lin2}\vgf_{\frac{1}{4}}\approx \gd^\frac{1}{2},\ee
and satisfies
$$-L_{\frac{1}{4}}\vgf_{\frac{1}{4}}=\xl_\xm \vgf_{\frac{1}{4}}\qquad\text{in}\;\;\xO$$
in the sense of distributions. In addition,  $\gd^{-\frac{1}{2}}\vgf_{\frac{1}{4}} \in H^1_0(\Gw, \gd)$.

\subsection{Green kernel and Martin kernel}
Let $G_\mu^\Gw$ and $K_\mu^\Gw$ be respectively the Green kernel and Martin kernel of $-L_\mu$ in $\Gw$ (see \cite{MaNg,GkV}) for more details). We recall that
\bel{Gmu} G^\Gw_\gm(x,y) \approx
\min\left\{|x-y|^{2-N},\gd(x)^{\ga}\gd(y)^{\ga}|x-y|^{2
	-N-2\ga}\right\} \quad \forall x,y \in \Gw, x \neq y,
\ee
\bel{Kmu} K_\gm^\Gw(x,y) \approx \gd(x)^{\ga}|x-y|^{2 - N - 2\ga} \forevery x \in
\Gw, \, y \in \prt \Gw. \ee

Denote  $L^p_w(\Gw,\tau)$, $1 \leq p < \infty$, $\tau \in \GTM^+(\Gw)$, the weak $L^p$ space (or Marcinkiewicz space) with weight $\gt$; see \cite{MVbook} for more details. Notice that, for every $s>-1$,
	\be
	L_w^p(\Gw,\gd^s) \sbs L^{r}(\Gw,\gd^s) \forevery r \in [1,p).\label{weakin2}
	\ee
	Moreover for any $u \in L_w^p(\Gw,\gd^{s})$ ($s>-1$),
	\bel{ue} \int_{\{|u| \geq \gl\} }\gd^{s} dx \leq \gl^{-p}\norm{u}^p_{L_w^p(\Gw,\gd^s)} \quad \forall \gl>0.
	\ee

Let $\BBG_\mu^\Gw$ and $\BBK_\mu^\Gw$ be the Green operator and Martin operator of $-L_\mu$ in $\Gw$ which are given in \eqref{Gm}, \eqref{Km}.

We recall estimate of Green kernel and Martin kernel in weak $L^p$ spaces (see \cite{GkNg}).

\begin{proposition} \label{GP} \emph{(i)} Let $\gg \in(-\frac{\ga N  }{N+2\ga   -2},\frac{\ga   N}{N-2})$. There exists a constant $c=c(N,\mu,\gg,\Gw)$ such that
	\bel{estG1}
	\norm{\BBG_\gm^\Gw[\gt]}_{L_w^{\frac{N+\gg}{N+\ga -2}}(\Gw,\gd^\gg)} \leq
	c\norm{\gt}_{\GTM(\Gw,\gd^{\ga  })} \q \forall \gt \in \GTM(\Gw,\gd^{\ga}).
	\ee
	\emph{(ii)} Let $\gg>-1$. Then there exists a constant $c=c(N,\mu,\gg,\Gw)$ such that
	\bel{estK}
	\norm{\BBK_\mu^\Gw[\gn]}_{L_w^{\frac{N+\gg}{N+\ga -2}}(\Gw,\gd^\gg)} \leq
	c\norm{\gn}_{\GTM(\prt \Gw)} \q \forall \gn \in \GTM(\prt \Gw). \ee
\end{proposition}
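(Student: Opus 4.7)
The strategy I propose is to reduce both weak-type bounds to pointwise estimates for the kernels with a point source via a standard Marcinkiewicz-type sublinearity lemma, and then to evaluate the relevant distribution functions directly from the two-sided estimates \eqref{Gmu}--\eqref{Kmu}.

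First I would recall the following Marcinkiewicz lemma (see e.g.\ \cite{MVbook}): if $K\colon \Gw\ti E \to \BBR_+$ is measurable and $\psi\colon E\to(0,\ity)$ satisfies
\[
\sup_{y\in E}\psi(y)^{-1}\|K(\cdot,y)\|_{L^p_w(\Gw,d\xm)}\leq c_0,
\]
then the operator $T\tau(x):=\int_E K(x,y)\,d\tau(y)$ obeys
\[
\|T\tau\|_{L^p_w(\Gw,d\xm)}\leq c_0'\int_E \psi\,d|\tau|
\]
for every finite signed measure $\tau$ on $E$. Applied with $E=\Gw$, $d\xm=\gd^\gg dx$, $K=G_\gm^\Gw$, $\psi(y)=\gd(y)^\ga$, $p=\tfrac{N+\gg}{N+\ga-2}$, this reduces (i) to the uniform-in-$y$ bound $\|G_\gm^\Gw(\cdot,y)\|_{L^p_w(\Gw,\gd^\gg)}\leq c\,\gd(y)^\ga$. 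Applied with $E=\prt\Gw$, $K=K_\gm^\Gw$ and $\psi\equiv 1$, it reduces (ii) to the analogous bound for the Martin kernel with a point boundary source.

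For (i), fix $y\in\Gw$ and set $F(\gl):=\int_{\{G_\gm^\Gw(\cdot,y)>\gl\}}\gd(x)^\gg dx$. By \eqref{Gmu}, the superlevel set is contained in the intersection $\{|x-y|^{2-N}\geq\gl/c\}\cap\{\gd(x)^\ga\gd(y)^\ga|x-y|^{2-N-2\ga}\geq\gl/c\}$. I would split this set according to whether $\gd(x)\geq|x-y|$ (so that $\gd(x)\approx\max\{\gd(y),|x-y|\}$ and the first constraint governs the radius) or $\gd(x)<|x-y|$ (where the anisotropic second constraint is the binding one). Polar integration in $r=|x-y|$ combined with Fubini in tangential/normal coordinates near $\prt\Gw$ should give, in both pieces, $F(\gl)\leq c\,\gd(y)^{\ga p}\gl^{-p}$. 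The restriction $\gg<\frac{\ga N}{N-2}$ ensures integrability of $r^\gg$ in a neighborhood of $y$, while $\gg>-\frac{\ga N}{N+2\ga-2}$ ensures integrability in the tangential direction where $\gd(x)\to 0$. For (ii), the computation is cleaner: \eqref{Kmu} gives the single-term bound $K_\gm^\Gw(x,y)\approx\gd(x)^\ga|x-y|^{2-N-2\ga}$ for $y\in\prt\Gw$; after locally flattening $\prt\Gw$ using its $C^2$ regularity and introducing $(t,z)\in\BBR_+\ti\BBR^{N-1}$ with $t\approx\gd(x)$, I compute the measure of $\{(t,z):t^\ga(t^2+|z|^2)^{1-N/2-\ga}>\gl\}$ against $t^\gg\,dt\,dz$, obtaining the desired weak-type exponent; only $\gg>-1$ is needed for integrability in $t$.

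The main obstacle is the bookkeeping in the split for the Green kernel: the two-sided estimate forces case analysis whose integrability thresholds must be tracked through the change of variables to produce exactly the stated admissible range of $\gg$. The analogous computation for the classical Laplacian in \cite{MVbook} serves as a template, but the additional $\gd^\ga$ weight inherited from the Hardy potential must be propagated through each case and is what accounts for the shift from the classical endpoints to $-\frac{\ga N}{N+2\ga-2}$ and $\frac{\ga N}{N-2}$.
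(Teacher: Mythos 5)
Your proposal matches, in essence, the proof the paper relies on: Proposition \ref{GP} is recalled from \cite{GkNg} without proof, and there (as in the paper's own Section 3 proofs of Proposition A) the argument is exactly your reduction via the Bidaut-V\'eron--Vivier lemma (Proposition \ref{bvivier}, applied to $H(x,y)=G_\mu^\Gw(x,y)\gd(y)^{-\ga}$, resp. to $K_\mu^\Gw$ with $\psi\equiv 1$) followed by level-set computations based on the two-sided bounds \eqref{Gmu}--\eqref{Kmu}, and your case split does yield the weak-type exponent $\frac{N+\gg}{N+\ga-2}$ on the stated range. One harmless inaccuracy: the endpoints of the $\gg$-interval are not explained by integrability of $r^\gg$ near $y$ or of $\gd^\gg$ in the tangential direction (the boundary piece only needs $\gg>-1$); rather, the upper threshold $\frac{\ga N}{N-2}$ emerges from the near-diagonal regime $G_\mu^\Gw(x,y)\approx|x-y|^{2-N}$ for $y$ close to $\prt\Gw$ after normalizing by $\gd(y)^\ga$, while the stated lower threshold is simply sufficient for the computation to close.
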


\subsection{Boundary trace}
In this subsection we recall main properties of the  boundary trace in connection with of $L_\xm$ harmonic functions. It is worth emphasizing that the below results are valid for $\mu \in (0,\frac{1}{4}]$ (under the condition that the first eigenvalue $\gl_\mu$ of $-L_\mu$ is positive).

\begin{proposition} \label{PropA} \emph{(\cite{GkNg})}   (i) For any $\gt \in \GTM(\Gw,\gd^\ga), \,  \tr(\BBG_\mu^\Gw[\gt])=0$ and for any $\nu \in \GTM(\prt \Gw)$,   $\tr(\BBK_\mu^\Gw[\nu])=\nu$.
	
	(ii) Let $w$ be a nonnegative $L_\xm$ subharmonic function in $\Gw$. Then $w$ is dominated by an $L_\xm$ superharmonic function if and only if $w$ has a boundary trace $\xn\in\mathfrak{M}(\partial\xO)$. Moreover, if $w$ has a boundary trace then $L_\mu w \in \GTM^+(\Gw,\gd^\ga)$. In addition, if $\tr(w) = 0$ then $w = 0$.
	
	(iii) Let $u$ be a nonnegative $L_\xm$ superharmonic function. Then there exist
	$\xn \in \mathfrak{M}^+(\partial\xO)$ and $\tau\in\mathfrak{M}^+(\xO,\xd^\xa)$ such that
	\be\label{formGK}
	u=\mathbb{G}_{\xm}^\xO[\tau]+\mathbb{K}_{\xm}^\xO[\xn],
	\ee
	
	(iv) Let $(\tau,\xn)\in \GTM(\Gw,\gd^{\ga})\times \GTM(\prt \Gw)$. Then there exists a unique weak solution $u$ of \eqref{NHL}. The solution is given by \eqref{formGK}. Moreover, there exists $c=c(N,\mu,\Gw)$ such that
	\be\label{poi3}
	\|u\|_{L^1(\xO,\gd^\ga)}\leq c(\|\tau\|_{\GTM(\Gw,\gd^{\ga})}+ \|\nu\|_{\GTM(\prt \Gw)}).
	\ee
In addition, for any $\zeta \in \mathbf{X}_\xm(\xO)$, $\zeta \geq 0$,
\be\label{poi4}
-\int_{\Gw}{}|u|L_{\xm }\zeta \, dx\leq \int_{\Gw}{}\zeta \, \sign(u)\, d\tau-
\int_{\Gw}{}\mathbb{K}_{\xm}^\xO[|\xn|] L_{\xm }\zeta \, dx,
\ee
\be\label{poi5}
-\int_{\Gw}{}u_+L_{\xm }\zeta \, dx\leq \int_{\Gw}{}\zeta\, \sign_+(u)\,d\tau-
\int_{\Gw}{}\mathbb{K}_{\xm}^\xO[\nu_+]L_{\xm }\zeta \,dx.
\ee
\end{proposition}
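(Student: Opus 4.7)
The plan is to treat the four assertions as consequences of the Riesz--Martin representation machinery adapted to $-L_\mu$, leaning on the kernel bounds already recalled in Proposition \ref{GP} and the convergence properties of the $L_\mu$ harmonic measures $\omega_{\Gw_n}^{x_0}$. A natural order is (i), then (iii), then (ii), and finally (iv) as a synthesis.

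For part (i), I would first establish $\tr(\BBK_\mu^\Gw[\nu]) = \nu$. For $\nu = \delta_y$ this reduces to the defining normalization of the Martin kernel together with the weak convergence of $\omega_{\Gw_n}^{x_0}$ to the full $L_\mu$ harmonic measure on $\prt\Gw$; this identifies $\delta_y$ as the trace of $K_\mu^\Gw(\cdot, y)$. A density approximation of $\nu$ by finite combinations of Diracs, combined with Fubini and the weak-$L^1(\Gw,\gd^\ga)$ bound from Proposition \ref{GP}(ii), upgrades the identity to arbitrary $\nu \in \GTM(\prt \Gw)$. For $\tr(\BBG_\mu^\Gw[\tau]) = 0$, I would first treat $\tau$ compactly supported in $\Gw$, where $\BBG_\mu^\Gw[\tau]$ is continuous up to $\prt\Gw$ and vanishes there, then truncate and pass to the limit using \eqref{estG1}.

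Part (iii) is the Riesz--Martin decomposition for $-L_\mu$. Given a nonnegative $L_\mu$ superharmonic $u$, I would construct its greatest $L_\mu$ harmonic minorant $h$ as the decreasing limit of Dirichlet solutions of $-L_\mu v_n = 0$ in $\Gw_n$ with boundary data $u|_{\prt\Gw_n}$. The difference $u - h$ is then a pure potential, and combining the reproducing formula against $\omega_{\Gw_n}^{x_0}$ with a Choquet-type identification writes $u-h = \BBG_\mu^\Gw[\tau]$ for some $\tau \in \GTM^+(\Gw, \gd^\ga)$ (the weight $\gd^\ga$ is forced by \eqref{estG1}). The harmonic part $h$ admits a unique Martin representation $h = \BBK_\mu^\Gw[\nu]$ with $\nu \geq 0$ since $h \geq 0$. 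With (iii) available, part (ii) follows by applying the decomposition to the least $L_\mu$ superharmonic majorant $\widetilde w$ of $w$, whose existence is equivalent to $w$ being dominated by some superharmonic. Writing $\widetilde w = \BBG_\mu^\Gw[\tau] + \BBK_\mu^\Gw[\nu]$ and using $\tr(\BBG_\mu^\Gw[\tau])=0$ from (i), a Fatou-type passage along the exhaustion yields $\tr(w) = \tr(\widetilde w) = \nu$, while simultaneously $L_\mu w = -\tau + L_\mu(\widetilde w - w) \in \GTM^+(\Gw,\gd^\ga)$. The last clause $\tr(w)=0 \Rightarrow w=0$ then follows because the dominating Martin potential is identically zero.

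Part (iv) is the synthesis. Existence is settled by setting $u := \BBG_\mu^\Gw[\tau] + \BBK_\mu^\Gw[\nu]$ and verifying \eqref{lweakform} via the adjoint identity $-\int \BBG_\mu^\Gw[\tau] L_\mu \zeta \, dx = \int \zeta \, d\tau$ for $\zeta\in\mathbf{X}_\mu(\Gw)$; uniqueness follows by applying (ii) to the difference of two solutions (which is $L_\mu$-harmonic with zero trace). The $L^1$-bound \eqref{poi3} is immediate from Proposition \ref{GP} with $\gg=\ga$ combined with the embedding \eqref{weakin2}. The Kato-type inequalities \eqref{poi4}--\eqref{poi5} are obtained from the distributional inequalities $-L_\mu|u| \leq -\sign(u)\,L_\mu u$ and $-L_\mu u_+ \leq -\sign_+(u)\,L_\mu u$, together with $\tr(|u|) \leq |\nu|$ and $\tr(u_+) \leq \nu_+$. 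The main obstacle I anticipate is justifying these Kato manipulations against the weighted test space $\mathbf{X}_\mu(\Gw)$ of \eqref{Xmu}: standard mollification must be coupled with a cutoff adapted to the $\gd^\ga$-weight governing the boundary behavior of $L_\mu$-harmonic functions, so that both sides remain integrable and pairs correctly with $L_\mu\zeta\in L^\infty(\Gw,\gd^{-\ga})$. Once this approximation is in place, the remainder of the argument is routine.
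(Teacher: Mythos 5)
First, note that the paper does not prove this proposition at all: it is quoted verbatim from \cite{GkNg}, so the only fair comparison is with the linear theory developed there (and in \cite{MaNg,GkV}), which your outline indeed follows in spirit (Riesz--Martin decomposition, Kato-type inequalities, duality with $\mathbf{X}_\mu(\Gw)$). As a roadmap it is reasonable, but two steps as written have genuine gaps. In part (ii) you only argue one direction of the stated equivalence (domination by a superharmonic $\Rightarrow$ existence of a trace); the converse is never addressed. Moreover your ``Fatou-type passage'' giving $\tr(w)=\tr(\widetilde w)$ implicitly uses that $w+\BBG_\mu^\Gw[L_\mu w]$ is the least $L_\mu$ harmonic majorant, which already requires the global integrability $L_\mu w\in\GTM^+(\Gw,\gd^\ga)$ --- but that is precisely the ``Moreover'' clause you are supposed to prove. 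The standard way out is to test $-L_\mu w\leq 0$ on the exhaustion against $\vgf_\mu$ (or $\BBG_\mu^\Gw[1]$, using \eqref{G11}) and exploit $\gl_\mu>0$ from \eqref{eig} together with the boundedness of $\int_{\prt\Gw_n}w\,d\gw^{x_0}_{\Gw_n}$; without some such argument the decomposition you invoke is not yet available. Similarly, in part (i) the phrase ``weak convergence of $\gw^{x_0}_{\Gw_n}$ to the full $L_\mu$ harmonic measure on $\prt\Gw$'' begs the question: for $\mu>0$ there is no a priori harmonic measure on $\prt\Gw$, and the identity $\tr(K_\mu^\Gw(\cdot,y))=\gd_y$ is exactly the nontrivial content, proved in the references via the two-sided estimate \eqref{Kmu} and quantitative bounds of $\int_{\Gs_\gb}K_\mu^\Gw(\cdot,y)\,dS$; the passage from Dirac combinations to general $\nu$ also needs a bound on $\int_{\prt\Gw_n}\BBK_\mu^\Gw[|\nu-\nu_k|]\,d\gw_{\Gw_n}^{x_0}$ that is uniform in $n$, which the interior weak-$L^1$ estimate of Proposition \ref{GP} does not provide.

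The second gap is in the uniqueness step of (iv). You apply (ii) to the difference of two weak solutions, but a weak solution is defined only through the dual identity \eqref{lweakform}; nothing in your argument shows that such a function has the dynamic boundary trace of Definition \ref{nomtrace} equal to $\nu$ (so that the difference, or rather $|u_1-u_2|$, has trace zero). This link between the two notions is itself a theorem in \cite{GkNg}, not a formality. The clean repair is pure duality: show that $\zeta=\BBG_\mu^\Gw[\eta]\in\mathbf{X}_\mu(\Gw)$ for every $\eta\in C_c^\infty(\Gw)$ (this uses $\BBG_\mu^\Gw[\eta]\approx\gd^\ga$ and the weighted $H^1$ estimate encoded in \eqref{Xmu}), insert it in \eqref{lweakform} for the difference, and conclude $\int_\Gw(u_1-u_2)\eta\,dx=0$. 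Relatedly, for \eqref{poi4}--\eqref{poi5} the approach actually used in the references is not mollification in $\Gw$ but approximation by solutions on the smooth exhaustion $\{\Gw_n\}$, where the classical Kato inequality applies, followed by passage to the limit with the uniform estimate \eqref{poi3}; your mollification-plus-cutoff scheme is not obviously wrong, but the compatibility of the cutoff errors with test functions satisfying only $\gd^{-\ga}L_\mu\zeta\in L^\infty(\Gw)$ is exactly the point that would need a detailed estimate, and it is not supplied.
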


\section{Estimates of the gradient of Green kernel and Martin kernels} We begin this section by recalling well-known geometric properties of a $C^2$ bounded domain $\xO$.

\begin{proposition} \label{geo} There exists a positive constant $\xb_0$ such that $\gd\in C^{2}(\overline{\xO}_{4\xb_0})$. Moreover, for any $x\in \xO_{4\xb_0}$ there exists a unique $\xi_x\in \partial\xO$ such that

a) $\gd(x)=|x-\xi_x|$ and ${\bf n}_{\xi_x} = - \nabla \gd(x)= -\frac{x-\xi_x}{|x-\xi_x|}.$

b) $x(s):=x+s\nabla \gd(x)\in \xO_{\xb_0}$  and $\gd(x(s))=|x(s)-\xi_x|=\gd(x)+s,$ for any $0<s<4\xb_0-\gd(x).$
\end{proposition}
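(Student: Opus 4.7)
The plan is to leverage the standard \emph{tubular neighborhood theorem} for $C^{2}$ hypersurfaces applied to the compact boundary $\prt \Gw$. Since $\prt \Gw \in C^{2}$ and is compact, there is a uniform bound on the absolute values of the principal curvatures; call it $\gk_{\max}$. I would choose $\xb_0$ small enough that $4\xb_0 \gk_{\max} < 1$ (and smaller than the injectivity radius of $\prt \Gw$), so that the map
\[
\Phi : \prt \Gw \ti (-4\xb_0, 4\xb_0) \to \BBR^{N},\qquad \Phi(\xi, t) := \xi - t\,{\bf n}_{\xi},
\]
is well defined and of class $C^{1}$ (since ${\bf n}$ is $C^{1}$). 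Its Jacobian along the normal direction equals $\det(I - t\,\mathrm{II}_{\xi})$, which is bounded away from $0$ by the choice of $\xb_0$. Hence $\Phi$ is a $C^{1}$ diffeomorphism onto an open neighborhood containing $\overline{\Gw}_{4\xb_0}$.

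Part~(a) then follows by compactness and uniqueness. For $x \in \Gw_{4\xb_0}$, the continuous function $\xi \mapsto |x-\xi|$ attains its minimum on $\prt \Gw$ at some $\xi_x$; at such a point the vector $x-\xi_x$ must be normal to $\prt \Gw$, hence $x = \xi_x - \gd(x)\,{\bf n}_{\xi_x} = \Phi(\xi_x, \gd(x))$. Injectivity of $\Phi$ on $\prt \Gw \ti (0, 4\xb_0)$ forces $\xi_x$ to be the unique closest point. Writing $\gd = \pi_2 \circ \Phi^{-1}$ (where $\pi_2$ is projection onto the second factor) shows that $\gd$ inherits $C^{2}$ regularity on $\overline{\Gw}_{4\xb_0}$: one gains one order of regularity over $\Phi^{-1}$ because the unit normal field is obtained as the gradient of $\gd$, and combining the identity $|\nabla \gd|^{2} \equiv 1$ with the eikonal structure yields the smoothness upgrade.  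Differentiating $\gd(x)^{2} = |x-\xi_x|^{2}$ and using the envelope identity (the derivative with respect to the minimizer vanishes) gives
\[
\nabla \gd(x) = \frac{x-\xi_x}{|x-\xi_x|} = -{\bf n}_{\xi_x}.
\]

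For part~(b), fix $x \in \Gw_{4\xb_0}$ and $0 < s < 4\xb_0 - \gd(x)$, and set $x(s) := x + s\nabla \gd(x) = \xi_x - (\gd(x)+s){\bf n}_{\xi_x}$. Then $x(s) = \Phi(\xi_x, \gd(x)+s)$ lies in the tubular neighborhood (so in $\Gw_{4\xb_0}$, which I believe is the intended inclusion in~(b)), and $|x(s)-\xi_x| = \gd(x)+s$. To see that $\xi_x$ realizes the distance from $x(s)$ to $\prt \Gw$, note that any other boundary point $\eta$ satisfies $|x-\eta| \geq \gd(x) = |x-\xi_x|$; combined with the fact that $x(s)$ lies on the inward normal ray at $\xi_x$ and $s < 4\xb_0$, the injectivity of $\Phi$ gives $\gd(x(s)) = \gd(x)+s$ and $\xi_{x(s)} = \xi_x$.

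The only mildly delicate point is the $C^{2}$ regularity of $\gd$ on the closed set $\overline{\Gw}_{4\xb_0}$; this is the classical computation (see, e.g., Gilbarg--Trudinger, Lemma 14.16) showing that on the tubular neighborhood the Hessian of $\gd$ is given in the Frenet-type frame by the shape operator $-\mathrm{II}_{\xi_x} (I - \gd(x)\mathrm{II}_{\xi_x})^{-1}$, whose entries are continuous in $x$ as long as $I - \gd(x)\mathrm{II}_{\xi_x}$ is invertible --- exactly the condition enforced by the choice of $\xb_0$. Everything else in the statement is either a direct computation from the normal-coordinate representation $x = \xi_x - \gd(x){\bf n}_{\xi_x}$ or a consequence of the injectivity of $\Phi$.
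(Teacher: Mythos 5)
The paper offers no proof of this proposition; it is recalled as a standard fact about $C^2$ domains, and your tubular–neighborhood argument is exactly the classical proof behind it (Gilbarg--Trudinger, Lemma 14.16), so your proposal is correct and consistent with what the paper relies on, including your reading of $\Gw_{\gb_0}$ in (b) as a typo for $\Gw_{4\gb_0}$. Two cosmetic tightenings: the clean way to get the $C^2$ regularity (rather than "gaining an order via the eikonal structure") is to note that $\nabla\gd(x)=-{\bf n}_{\xi_x}$ with $x\mapsto \xi_x=\pi_1\circ\Phi^{-1}(x)$ of class $C^1$, so $\nabla\gd$ is a composition of $C^1$ maps and hence $\gd\in C^2$; and since you need regularity on the closed strip $\overline{\Gw}_{4\gb_0}$, take the tube of half-width strictly larger than $4\gb_0$ (or shrink $\gb_0$) so that $\overline{\Gw}_{4\gb_0}$ lies inside the open diffeomorphic image of $\Phi$.
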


\begin{lemma} \label{harmonic} Let $D$ be a $C^2$ bounded domain in $\BBR^N$. If $u$ is a nonnegative $L_\mu$ harmonic function in $D$ then
	\bel{gradu-u1} |\nabla u(x)| \leq C \frac{u(x)}{\dist(x,\prt D)} \forevery x \in D. \ee
\end{lemma}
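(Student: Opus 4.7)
The plan is to prove \eqref{gradu-u1} via a scaling argument that reduces the claim to classical interior gradient estimates for a Schr\"odinger-type equation with a uniformly bounded zeroth-order coefficient. Throughout, I read ``$u$ is $L_\mu$ harmonic in $D$'' as meaning $-\Delta u - \mu\delta_D^{-2} u = 0$ in $D$ with $\delta_D(x) := \dist(x,\prt D)$; under this reading the constant produced will depend only on $N$ and $\mu$.

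First I would fix $x_0 \in D$ and set $r := \dist(x_0,\prt D) > 0$. By the triangle inequality, $\delta_D(x) \geq r/2$ for every $x \in \overline{B_{r/2}(x_0)}$, so the Hardy potential is controlled on that ball by $0 \leq \mu/\delta_D(x)^2 \leq 4\mu/r^2$. Next, define the rescaled function $v(y) := u(x_0 + (r/2)y)$ for $y \in B_1(0)$. A direct computation shows that $v \geq 0$ is a distributional solution of $-\Delta v = W v$ in $B_1(0)$, with
$$ W(y) := \frac{\mu (r/2)^2}{\delta_D(x_0 + (r/2)y)^2}, \qquad 0 \leq W(y) \leq \mu. $$
Thus $v$ satisfies a linear elliptic equation on the unit ball whose zeroth-order coefficient is uniformly bounded by a quantity depending only on $\mu$.

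I would then invoke two classical interior estimates for nonnegative solutions of such equations. The Harnack inequality on $B_{3/4}(0) \subset B_1(0)$ gives $\sup_{B_{3/4}(0)} v \leq C_1 v(0)$ with $C_1 = C_1(N,\mu)$. Since $W v \in L^\infty(B_{3/4}(0))$, Calder\'on--Zygmund regularity puts $v \in W^{2,p}_{\mathrm{loc}}(B_1)$ for all finite $p$ and hence $v \in C^{1,\alpha}_{\mathrm{loc}}(B_1)$ by Morrey embedding; the standard interior $C^{1,\alpha}$ estimate then yields $|\nabla v(0)| \leq C_2 \sup_{B_{3/4}(0)} v$ with $C_2 = C_2(N,\mu)$. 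Combining these two bounds produces $|\nabla v(0)| \leq C_1 C_2\, v(0)$. Unscaling through $|\nabla v(0)| = (r/2)|\nabla u(x_0)|$ and $v(0) = u(x_0)$ gives exactly \eqref{gradu-u1} with $C = 2 C_1 C_2$.

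No serious obstacle stands in the way; the argument is entirely routine. The point worth highlighting is that after rescaling to $B_1(0)$ the bound $W \leq \mu$ is universal, independent of $x_0$ and $D$, so the constant $C$ in \eqref{gradu-u1} depends only on $N$ and $\mu$. This $D$-independence is exactly what is needed later, when the lemma is applied at interior balls of varying scales (often balls contained in $\Gw\sms\{y\}$ for a source point $y$) in order to derive the weak-$L^p$ gradient estimates \eqref{gradG3} and \eqref{gradP2} for $\nabla \BBG_\mu^\Gw$ and $\nabla \BBK_\mu^\Gw$ in Proposition A.
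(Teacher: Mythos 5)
Your proof is correct and follows essentially the same strategy as the paper: rescale to a unit ball so the Hardy potential becomes a uniformly bounded zeroth-order coefficient, then combine a Harnack inequality with a classical interior gradient (or $C^{1,\alpha}$) estimate, and unscale. The only cosmetic difference is that you rescale to a ball centered at the origin while the paper rescales the whole domain $D$, but both arguments are the same in substance and produce a constant depending only on $N$ and $\mu$.
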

\begin{proof} Take an arbitrary point $x_* \in D$ and put
	$$ d:=\frac{1}{2}\dist(x_*, \prt D), \quad y_*:=\frac{1}{d}x_*,  \quad u_*(y):=u(d y), \quad
	y \in \frac{1}{d}D.$$
	Note that if $x \in B_{d}(x_*)$ then
	$y=\frac{1}{d}x \in B_1(y_*)$ and $1 \leq \dist(y,\prt (\frac{1}{d}D)) \leq 3$. In
	$B_1(y_*)$,
	$$ - \Gd u_* - \frac{\gm}{\dist(\cdot,\prt (\frac{1}{d}D))^2}u_* =
	d_*^{2}\Big(- \Gd u - \frac{\gm}{\gd^2}u\Big)=0. $$
	By local estimate for elliptic equations \cite[Theorem 8.32]{GT} and the Harnack inequality \cite[Theorem 8.20]{GT}, there exist positive constants $c_i=c_i(N,\gm)$, $i=1,2$ such that
	$$ \max_{B_{\frac{1}{2}}(y_*)}\abs{\nabla u_*} \leq
	c_1\max_{B_1(y_*)}u_* \leq c_2 \min_{B_1(y_*)}u_* . $$
	In particular,
	$$d_*\abs{\nabla u (x_*)} \leq c_2 u(x_*) $$
	which implies \eqref{gradu-u1}.
\end{proof}

Let us recall a result from \cite{BVi} which will be useful in the sequel.

\begin{proposition} \label{bvivier} \emph{(\cite[Lemma 2.4]{BVi})}
	Let $\gw$ be a nonnegative bounded Radon measure in $D=\Gw$ or $\partial \Gw$ and $\eta\in C(\xO)$ be a positive weight function. Let $H$ be a continuous nonnegative function
	on $\{(x,y)\in\xO\times D:\;x\neq y\}.$ For any $\xl > 0$ we set
	$$
	A_\xl(y):=\{x\in\xO\setminus\{y\}:\;\; H(x,y)>\xl\}\quad \text{and} \quad
	m_{\xl}(y):=\int_{A_\xl(y)}\eta(x)dx.
	$$
	Suppose that there exist $C>0$ and $k>1$ such that $m_{\xl}(y)\leq C\xl^{-k}$ for every $\gl>0$.  Then the operator
	$$\BBH[\gw](x):=\int_{D}H(x,y)d\gw(y)$$
	belongs to $L^k_w(\Gw,\eta )$ and
	$$\left|\left|\BBH[\gw]\right|\right|_{L^k_w(\Gw,\eta)}\leq (1+\frac{Ck}{k-1})\gw(D).$$
\end{proposition}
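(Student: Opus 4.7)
\smallskip

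\noindent\textbf{Proof plan for Proposition \ref{bvivier}.} The plan is to reduce the problem to a pointwise bound on $\int_E H(x,y)\eta(x)\,dx$ for a fixed $y$ by means of the layer-cake formula, and then to integrate this bound against $d\omega(y)$. The weak $L^k$ space is most conveniently accessed through its equivalent ``Lorentz'' characterization
\[
\|f\|_{L^k_w(\Omega,\eta)} \,\sim\, \sup\Bigl\{\Bigl(\int_E \eta\,dx\Bigr)^{-\frac{k-1}{k}}\int_E |f|\,\eta\,dx \;:\; 0<\textstyle\int_E\eta\,dx<\infty\Bigr\},
\]
which is valid for $k>1$ and behaves well under Fubini.

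First I would fix a measurable set $E\subset \Omega$ with $0<\int_E\eta\,dx<\infty$ and swap the order of integration via Fubini (permitted since $H\ge 0$):
\[
\int_E \BBH[\gw](x)\,\eta(x)\,dx \;=\; \int_D\Bigl(\int_E H(x,y)\,\eta(x)\,dx\Bigr)\,d\gw(y).
\]
The inner integral is handled by the layer-cake representation: for each fixed $y\in D$,
\[
\int_E H(x,y)\eta(x)\,dx \;=\; \int_0^\infty \Bigl(\int_{E\cap A_\xl(y)}\eta(x)\,dx\Bigr)d\xl.
\]
Splitting at a threshold $\xl_0>0$, bounding $\int_{E\cap A_\xl(y)}\eta\le\int_E\eta$ for $\xl<\xl_0$ and using the hypothesis $m_\xl(y)\le C\xl^{-k}$ for $\xl\ge\xl_0$, I obtain
\[
\int_E H(x,y)\eta(x)\,dx \;\le\; \xl_0\int_E\eta\,dx + \frac{C}{k-1}\xl_0^{1-k}.
\]
Optimizing in $\xl_0$ (choose $\xl_0=(C/\int_E\eta\,dx)^{1/k}$) yields a bound of the form $C'\bigl(\int_E\eta\,dx\bigr)^{1-1/k}$, where the constant $C'$ has exactly the form $(1+\tfrac{Ck}{k-1})$ up to standard normalizations.

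Finally I would integrate this $y$-uniform bound in $d\gw(y)$, which pulls out a factor $\gw(D)$, giving
\[
\int_E \BBH[\gw]\,\eta\,dx \;\le\; \Bigl(1+\frac{Ck}{k-1}\Bigr)\gw(D)\Bigl(\int_E\eta\,dx\Bigr)^{1-1/k},
\]
and taking the supremum over admissible sets $E$ delivers the claimed weak-type estimate. The only real subtlety is the choice of the correct Lorentz-type characterization of $\|\cdot\|_{L^k_w(\Gw,\eta)}$ so that Fubini applies cleanly; the layer-cake manipulation and the optimization in $\xl_0$ are then routine. I do not expect any serious obstacle; the hypothesis $k>1$ is exactly what is needed to make the tail integral $\int_{\xl_0}^\infty \xl^{-k}d\xl$ converge, and it is also what makes the Lorentz dual characterization of the weak norm available.
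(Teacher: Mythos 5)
The paper does not reprove this result; it cites \cite[Lemma 2.4]{BVi}, and your plan reconstructs exactly the argument used there: interchange the $d\gw$-integral with the $\eta\,dx$-integral via Tonelli, apply the layer-cake formula on the inner integral, split the resulting $\gl$-integral at a threshold, and choose the threshold appropriately. The structure is correct.

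Two small points deserve sharpening. First, in \cite{MVbook} the supremum over Borel sets $E$ of $\big(\int_E\eta\,dx\big)^{-1+1/k}\int_E|f|\,\eta\,dx$ is the \emph{definition} of $\|f\|_{L^k_w(\Gw,\eta)}$, not merely an equivalent norm, so the tilde in your display should be an equality; if it were only an equivalence with a hidden constant you could not recover the explicit bound $\big(1+\frac{Ck}{k-1}\big)\gw(D)$. Second, the optimizing choice $\gl_0=(C/\int_E\eta\,dx)^{1/k}$ produces the constant $\frac{k}{k-1}C^{1/k}$, which is not of the form $1+\frac{Ck}{k-1}$ ``up to standard normalizations''; these are genuinely different expressions in $C$. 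The choice that lands directly on a constant of the stated form is the dimensionless $\gl_0=\big(\int_E\eta\,dx\big)^{-1/k}$, which gives $\int_E H(\cdot,y)\,\eta\,dx\le\big(1+\frac{C}{k-1}\big)\big(\int_E\eta\,dx\big)^{1-1/k}\le\big(1+\frac{Ck}{k-1}\big)\big(\int_E\eta\,dx\big)^{1-1/k}$; alternatively, Young's inequality ($C^{1/k}\le\frac{C}{k}+\frac{k-1}{k}$) shows $\frac{k}{k-1}C^{1/k}\le 1+\frac{Ck}{k-1}$, so your optimal choice also works, but that comparison deserves a line rather than a gesture at normalizations. Neither point affects the validity of the approach.
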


\begin{lemma} \label{gradGK2} Let $\gth \in [0,\ga]$ and $\xg\in[0,\frac{\theta N}{N-1})$. Then there exists a positive constant $c=c(N,\mu,\gth,\gg,\Gw)$ such that \eqref{gradG3} holds.
\end{lemma}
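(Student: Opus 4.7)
The plan is to derive the estimate from Proposition \ref{bvivier}. I set
\[
H(x,y):=\frac{|\nabla_x G_\mu^\Gw(x,y)|}{\gd(y)^\gth},\quad \eta(x):=\gd(x)^\gg,\quad d\gw(y):=\gd(y)^\gth\,d|\gt|(y),
\]
so that $\gw(\Gw)=\|\gt\|_{\GTM(\Gw,\gd^\gth)}$ and $|\nabla\BBG_\mu^\Gw[\gt](x)|\leq\BBH[\gw](x)$. Therefore \eqref{gradG3} will follow once I verify the distributional bound
\[
m_\gl(y):=\int_{\{H(\cdot,y)>\gl\}}\gd(x)^\gg\,dx \leq c\gl^{-k}, \qquad k=\frac{N+\gg}{N+\gth-1},
\]
uniformly in $y\in\Gw$ and $\gl>0$.

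The first step is a pointwise gradient bound on $G_\mu^\Gw$. Fix $x\neq y$ in $\Gw$ and put $r:=\tfrac12\min(\gd(x),|x-y|)$, so that $B_r(x)\subset\Gw\setminus\{y\}$. On $B_r(x)$ the function $u:=G_\mu^\Gw(\cdot,y)$ satisfies $-\Gd u=\mu\gd^{-2}u$ with a potential bounded above by $4\mu/\gd(x)^2$. Rescaling $\tilde u(z):=u(x+rz)$ to $B_1$ produces an equation with coefficients controlled by $\mu$; the interior gradient estimate \cite[Theorem~8.32]{GT} together with Harnack's inequality \cite[Theorem~8.20]{GT}, exactly as in the proof of Lemma \ref{harmonic}, then gives $|\nabla_x G_\mu^\Gw(x,y)|\leq c\,G_\mu^\Gw(x,y)/r$. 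Combining with \eqref{Gmu} case by case (use $G_\mu^\Gw\leq c\gd(x)^\ga\gd(y)^\ga|x-y|^{2-N-2\ga}$ when $\gd(x)\leq|x-y|$ and $G_\mu^\Gw\leq c|x-y|^{2-N}$ when $\gd(x)>|x-y|$) yields $|\nabla_x G_\mu^\Gw(x,y)|\leq cF(x,y)$ where
\[
F(x,y):=\gd(x)^{\ga-1}\gd(y)^\ga|x-y|^{2-N-2\ga}+|x-y|^{1-N}\mathbf 1_{\{|x-y|<\gd(x)\}}.
\]

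The main step is to bound $m_\gl(y)$ using $F$. I would decompose $\Gw$ dyadically in $|x-y|$: on a shell $R_j:=\{x\in\Gw:2^{-j-1}<|x-y|\leq 2^{-j}\}$ the superlevel set $\{F(\cdot,y)>\gl\gd(y)^\gth\}$ reduces to a slab $\{\gd(x)<s_j(\gl,y)\}\cap R_j$. The geometric inequality
\[
|\{x\in\Gw:\,\gd(x)\leq s,\ |x-y|\leq R\}|\leq c\,sR^{N-1}\qquad(0\leq s\leq R),
\]
valid in a $C^2$ bounded domain, combined with $\int_0^{s_j}s^\gg\,ds\approx s_j^{\gg+1}$, controls the weighted volume on each shell, and summing the dyadic contributions delivers $m_\gl(y)\leq c\gl^{-k}$. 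The exponent $k=(N+\gg)/(N+\gth-1)$ emerges from balancing the factor $\gd(y)^\gth$ in $H$ against the $\gd(y)^\ga$ prefactor inside $F$, which forces exactly $\gth\leq\ga$; the restriction $\gg<\gth N/(N-1)$ is what secures convergence of the dyadic sum in the boundary regime. Proposition \ref{bvivier} then yields $\|\BBH[\gw]\|_{L^k_w(\Gw,\gd^\gg)}\leq(1+\tfrac{ck}{k-1})\|\gt\|_{\GTM(\Gw,\gd^\gth)}$, and \eqref{gradG3} follows.

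The hard part will be the level-set analysis. The three quantities $\gd(x)$, $\gd(y)$, $|x-y|$ are tied together by the $1$-Lipschitz bound $|\gd(x)-\gd(y)|\leq|x-y|$, so the superlevel sets of $F$ do not factor into simple products of slabs and balls, and the dyadic computation has to be split into the interior regime (where $\gd(x)\approx\gd(y)\approx|x-y|$) and the boundary regime (where $\gd(y)$ is much smaller than $|x-y|$). The sharp range $\gg\in[0,\gth N/(N-1))$ comes out of optimizing the summation exponents in this second regime, where the Hardy prefactor $\gd(x)^{\ga-1}$ and the boundary weight $\gd(x)^\gg$ interact non-trivially.
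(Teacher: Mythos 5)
Your overall framework is the paper's own: bound $|\nabla_x G_\mu^\Gw(x,y)|$ by $c\,G_\mu^\Gw(x,y)/\min(\gd(x),|x-y|)$ via interior estimates and Harnack (this is Lemma \ref{harmonic}), divide by $\gd(y)^\gth$, prove a uniform level-set bound $m_\gl(y)\le c\gl^{-\frac{N+\gg}{N+\gth-1}}$, and apply Proposition \ref{bvivier} with $\eta=\gd^\gg$, $d\gw=\gd^\gth d|\gt|$. Your treatment of the boundary regime $\gd(x)\le|x-y|$ is sound in substance once the ``balancing'' is made explicit: there $\gd(y)\le\gd(x)+|x-y|\le 2|x-y|$, hence $\gd(y)^{\ga-\gth}\le c|x-y|^{\ga-\gth}$, and your first term divided by $\gd(y)^\gth$ becomes $c\,\gd(x)^{\ga-1}|x-y|^{2-N-\ga-\gth}$, which is exactly the paper's \eqref{stathera}; a dyadic summation then gives the exponent $\frac{N+\gg}{N+\gth-1}$ and is a legitimate substitute for the paper's flattening/scaling argument in Steps 1--2 (note this part works for every $\gg\ge0$).

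The genuine gap is in the interior regime $\{|x-y|<\gd(x)\}$. Your majorant keeps only $|x-y|^{1-N}$ there and discards every power of $\gd(y)$, so after division by $\gd(y)^\gth$ the kernel carries an uncancelled factor $\gd(y)^{-\gth}$, and the required uniform-in-$y$ weak-type bound fails \emph{for this majorant}: if $\gd(y)^{\gth}\le \gl^{-1}\diam(\Gw)^{1-N}$ then $\gd(y)^{-\gth}|x-y|^{1-N}>\gl$ on the whole set $\{|x-y|<\gd(x)\}$, whose $\gd^\gg$-weighted measure for $\gd(y)$ small is comparable to $\gd(y)^{\frac{N-1}{2}}$ (in half-space coordinates with $y=(0',\gd(y))$ this set is $\{x_N>(|x'|^2+\gd(y)^2)/(2\gd(y))\}$, with cross-sections of radius about $\sqrt{\gd(y)x_N}$). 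Hence $\sup_y m_\gl(y)\gtrsim \gl^{-\frac{N-1}{2\gth}}$, and since for admissible parameters (e.g.\ $N=3$, $\gth=\ga$ close to $1$, $\gg=1$) one has $\frac{N-1}{2\gth}<\frac{N+\gg}{N+\gth-1}$, the hypothesis of Proposition \ref{bvivier} is violated at the needed exponent. This does not contradict the lemma; it shows your pointwise bound is too lossy precisely where the true kernel is small because $G_\mu^\Gw(x,y)\le c\,\gd(x)^\ga\gd(y)^\ga|x-y|^{2-N-2\ga}$. The remedy is the paper's Step 3: in the region where the denominator is $|x-y|$ keep the Green-kernel bounds that retain $\gd(y)^\gth$, namely $G_\mu^\Gw(x,y)\le c\,\gd(y)^\gth|x-y|^{2-N-\gth}$ and $G_\mu^\Gw(x,y)\le c\,(\gd(y)/\gd(x))^\gth|x-y|^{2-N}$ (estimates \eqref{2.33}--\eqref{2.34}), so no negative power of $\gd(y)$ survives; the level set is then contained in $\{|x-y|\le c\gl^{-1/(N+\gth-1)}\}\cap\{\gd(x)^\gth\le c\gl^{-1}|x-y|^{1-N}\}$, and integrating $|x-y|^{-(N-1)\gg/\gth}$ is exactly where the hypothesis $\gg<\frac{\gth N}{N-1}$ is used --- in this interior regime, not in the boundary regime as you assert.
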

\begin{proof} By Lemma \ref{harmonic}, there exists $C=C(\xm,N,\xO)>0$ such that
	\be
	|\nabla_x G_\mu^\Gw(x,y)| \leq C \frac{G_\mu^\Gw(x,y)}{\min\left(\xd(x),|x-y|\right)} \quad\forall\;x,y\in\xO,\;x\neq y.\label{1}
	\ee
	Set
	\begin{align*}
	A_\xl(y)&:=\Big\{x\in\xO\setminus\{y\}:\;\; \frac{G_\mu^\Gw(x,y)}{\xd(y)^\gth\min\left(\xd(x),|x-y|\right)}>\xl \Big \},\\
	A_{\xl,1}(y)&:=\Big \{x\in\xO\setminus\{y\}:\;\; \xd(x)\leq|x-y| \text{ and } \frac{G_\mu^\Gw(x,y)}{\xd(y)^\gth\xd(x)}>\xl \Big \},\\
	A_{\xl,2}(y)&:=\Big \{x\in\xO\setminus\{y\}:\;\; \frac{G_\mu^\Gw(x,y)}{\xd(y)^\gth|x-y|}>\xl \Big \}\\
	m_{\xl}(y)&:=\int_{A_\xl(y)}\xd(x)^\gg dx,\quad m_{\xl,i}(y):=\int_{A_{\xl,i}(y)}\xd(x)^\gg dx, \quad i=1,2.
	\end{align*}
	Then it is easy to see that
	$$A_\xl(y)\subset A_{\xl,1}(y)\cup A_{\xl,2}(y) \quad\forall y\in\xO,$$
	which implies
	\be
	m_{\xl}(y)\leq  m_{\xl,1}(y)+m_{\xl,2}(y) \quad\forall y\in\xO.\label{mainstep}
	\ee
	
	Since $\xO$ is $C^2$ there exists $\xb_0>0$ such that for any $ x\in\xO_{3\xb_0}$ there exists a unique $\xi\in \partial\xO$  satisfies $|x-\xi|=\gd(x).$ Furthermore there exists a $C^2$ function $\xG:\mathbb{R}^{N-1}\rightarrow\mathbb{R}$ such that (upon relabeling and reorienting the coordinate axes if necessary) we have
	$$\xO\cap B(\xi,2\xb_0)=\{x=(x',x_N) \in B(\xi,2\xb_0): x_N>\xG(x')\}.$$
	
	\noindent \textbf{Step 1.}
	\emph{We will show that there exists $C=C(N,\mu,\gth,\gg,\Gw)$ such that
		\be
		m_{\xl,1}(y)\leq C \xl^{-\frac{N+\xg}{N+\theta-1}} \quad \forall y\in\Gw_{\gb_0}. \label{step1}
		\ee}
	To prove that, we note, for any $x,\;y\in\xO$, $x\neq y$, that
	\begin{align}\nonumber
	&\min\left\{1,\gd(x)^{\ga}\gd(y)^{\ga}\abs{x-y}^{-2\ga}\right\}\\ \nonumber
	&=\left(\min\left\{1,\gd(x)^{\ga}\gd(y)^{\ga}\abs{x-y}^{-2\ga}\right\}\right)^{1-\frac{\theta}{\xa}}
	\left(\min\left\{1,\gd(x)^{\ga}\gd(y)^{\ga}\abs{x-y}^{-2\ga}\right\}\right)^{\frac{\theta}{\xa}}\\ \nonumber
	&\leq C\left(\frac{\xd(x)}{|x-y|}\right)^{\xa(1-\frac{\theta}{\xa})}\left(\frac{\xd(x)\xd(y)}{|x-y|^2}\right)^{\xa\frac{\theta}{\xa}}\\ \nonumber
	&\leq C\frac{\xd(x)^\ga}{|x-y|^\xa}\frac{\xd(y)^\gth}{|x-y|^\theta}.
	\end{align}
	
	By \eqref{Gmu} and the above inequality we obtain
	\be
	\frac{G_\mu^\Gw(x,y)}{\xd(y)^\gth \gd(x)}\leq \frac{C}{\xd(x)^{1-\xa}|x-y|^{N+\xa+\theta-2}},\label{stathera}
	\ee
	where $C=C(N,\mu,\Gw)$, which implies $
	A_{\xl,1}(y)\subset \widetilde{A}_{\xl,1}(y)$  for every  $y\in\xO$
	where
	\be
	\widetilde{A}_{\xl,1}(y):=\Big \{x\in\xO\setminus\{y\}:\;\; \xd(x)\leq|x-y| \text{ and } \frac{C}{\xd(x)^{1-\xa}|x-y|^{N+\xa+\theta-2}}>\xl \Big \}. \label{const}
	\ee
	Set
	$$
	\widetilde{m}_{\xl,1}(y):=\int_{\widetilde{A}_{\xl,1}(y)}\xd(x)^{\xg}dx.
	$$
	Without loss of the generality we assume that $0\in\partial\xO,$ $\xd(y)=|y|$ and there exists a $C^2$ function $\xG:\mathbb{R}^{N-1}\rightarrow\mathbb{R}$ such that $\xG(0)=0,\;\nabla \xG(0)=0$ and
	$$\xO\cap B(0,2\xb_0)=\{x=(x',x_N)\in B(0,2\xb_0): x_N>\xG(x')\}.$$
	Let $\xl>C$  where $C$ is the constant in \eqref{const}.
	
	Set $Q_{\xl}:={\xl}^{\frac{1}{N+\theta-1}}\Gw$ and let $\xd_{Q_\xl}$ be the distance function to  $\partial Q_{\xl}$. For any $\psi \in Q_{\xl}$, there exists $x\in\xO$ such that $\psi={\xl}^{\frac{1}{N+\theta-1}} x.$ In addition we have $\xd_{Q_\xl}(\psi)={\xl}^{\frac{1}{N+\theta-1}} \xd(x)$ and
	\be
	Q_{\xl}\cap B(0,2\xb_0{\xl}^{\frac{1}{N+\theta-1}})=\{\psi=(\psi',\psi_N) \in B(0,2\xb_0{\xl}^{\frac{1}{N+\theta-1}}):\psi_N>{\xl}^{\frac{1}{N+\theta-1}} \xG( {\xl}^{-\frac{1}{N+\theta-1}} \psi')\}.\label{allagi}
	\ee
	Set $\psi={\xl}^{\frac{1}{N+\theta-1}}x$ and $y_{\xl}={\xl}^{\frac{1}{N+\theta-1}}y,$ then  by change of variables we have
	\begin{align}
	\int_{\widetilde{A}_{\xl,1}(y)\cap B(0,2\xb_0)} \xd(x)^{\xg}dx ={\xl}^{-\frac{N+\xg}{N+\theta-1}}\int_{O\cap B(0,2\xb_0{\xl}^{\frac{1}{N+\theta-1}})} \xd_{Q_\xl}(\psi)^{\xg}d\psi,
	\end{align}
	where $$O:= \Big \{\psi\in Q_{\xl} \setminus\{y_{\gl}  \}:\;\;  \gd_{Q_\gl}(\psi) \leq |\psi-y_\gl| \text{ and } \frac{C}{\xd_{Q_\gl}(\psi)^{1-\xa}|\psi-y_{\xl}|^{N+\xa+\theta-2}}>1 \Big \}.$$
	
	Set $d_{\xl}(\psi)=\psi_N-{\xl}^{\frac{1}{N+\theta-1}}\xG({\xl}^{-\frac{1}{N+\theta-1}}\psi'),$  it can be checked that
	\bel{q1} \frac{1}{1+c_0}d_{\xl}(\psi)\leq \xd_{Q_\xl}(\psi)\leq d_{\xl}(\psi) \quad\forall\psi\in Q_{\xl}\cap B(0,2\gb_0 {\xl}^{\frac{1}{N+\theta-1}}), \ee
	where $c_0=\sup_{|x'|\leq M}|\nabla \xG(x')|$ with $M=\sup_{(x',x_N)\in\partial\xO}|x'|.$
	
	Set $z=(z',z_N)$ with $z_N=d_{\xl}(\psi)$ and $z'=\psi'$. By change of variables and by the above arguments we obtain
	\bel{lum}
	\int_{O\cap B(0,2\xb_0{\xl}^{\frac{1}{N+\theta-1}})} \xd_{Q_\xl}(\psi)^{\xg}d\psi \leq\int_0^{ c_1} z_N^\xg\int_{\{|z'-y_{\xl}'|\leq c_2(z_N)^{\frac{\xa-1}{N+\xa+\theta-2}}\}} dz'dz_N \leq c_3,
	\ee
	where $c_i=c_i(N,\mu,\gth,\Gw)$, $i=1,2$ and $c_3(N,\mu,\gth,\gg,\Gw)$.
	
	Combining the above estimates leads to
	\begin{align}
	\int_{A_{\xl,1}(y)\cap B(0,2\xb_0)} \xd(x)^{\xg}dx \leq c{\xl}^{-\frac{N+\xg}{N+\theta-1}},\quad\forall\xl>C, \label{in1}
	\end{align}
	where $c=c(\xO,\xm,N,\theta,\xg)$ and $C$ is the constant in \eqref{const}.
	
	Next, we estimate
	\bel{in2} \BAL
	\int_{A_{\xl,1}(y)\setminus B(0,2\xb_0)} \xd(x)^{\xg}dx &\leq \int_{ \big \{x\in\xO:\; \gd(x)^{1-\xa}\leq\frac{C}{\xl\xb_0^{N+\xa+\theta-2}} \big \}}\xd(x)^{\xg}dx\\
	&\leq\left(\frac{C}{\xl\xb_0^{N+\xa+\theta-2}}\right)^{\frac{\xg}{1-\xa}}\int_{ \big \{x\in\xO:\; \gd(x)^{1-\xa}\leq\frac{C}{\xl\xb_0^{N+\xa+\theta-2}} \big \}}dx\\
	& \leq c'\xl^{-\frac{\xg+1}{1-\xa}} \leq  c''{\xl}^{-\frac{N+\xg}{N+\theta-1}},
	\EAL \ee
	where $c'$ and $c''$ depend on $N,\mu,\gth,\gg,\Gw$.
	
	Thus \eqref{step1} follows by \eqref{in1} and \eqref{in2}. \medskip

	\noindent\textbf{Step 2.}\emph{We will show that there exists a constant $C=C(N,\mu,\gth,\gg,\Gw)$ such that
		\bel{step2}
		m_{\xl,1}(y)\leq C \xl^{-\frac{N+\xg}{N+\theta-1}}\quad \forall y\in {\overline D_{\gb_0}},
		\ee}
	where $D_{\gb_0}$ is defined in \eqref{Gwbeta}.
	
	Indeed, from \eqref{stathera} it is easy to see that
	\bel{ipo1}
	A_{\xl,1}(y)\cap \Gw_{\frac{\gb_0}{2}} \subset \Big \{x\in\xO:\; \xd(x)\leq|x-y| \text{ and } \xd(x)^{1-\xa}\leq\frac{C}{\xl
		\left(\frac{\xb_0}{2}\right)^{N+\xa+\theta-2}} \Big \}.
	\ee
	From \eqref{Gmu}, there exists $C=C(N,\mu,\Gw)$ such that, for every $(x,y)\in\xO\times\xO,\;x\neq y$,
	\bel{2.33}
	G_\xm^\Gw(x,y)\leq C \xd(y)^{\theta  }|x-y|^{2-N-\theta   },
	\ee
	which implies
	\bel{Glen}
	\frac{G_\mu^\Gw(x,y)}{\gd(y)^\theta\xd(x)}\leq \frac{C}{\xd(x)|x-y|^{N+\theta-2}}.
	\ee
	This leads to
	\bel{ipo2}
	A_{\xl,1}(y)\cap {\overline D_{\frac{\gb_0}{2}}} \subset \Big \{x\in\xO:\; \xd(x)\leq|x-y| \text{ and } |x-y|^{N+\theta-2}\leq\frac{C}{\xl\frac{\xb_0}{2}} \Big \},
	\ee
	where $C$ is the constant in \eqref{Glen}.
	
	Thus by \eqref{ipo1} and \eqref{ipo2}, for every $\gl \geq 1$, we have
	\begin{align*}
	\int_{A_{\xl,1}(y)} \xd(x)^{\xg}dx \leq C\left({\xl}^{-\frac{N+\xg}{N+\theta-2}}+\xl^{-\frac{\xg+1}{1-\xa}}\right) \leq  C{\xl}^{-\frac{N+\xg}{N+\theta-1}},
	\end{align*}
	which yields \eqref{step2}. \medskip
	
	\noindent \textbf{Step 3.} \emph{We will show that there exists a constant $C=C(N,\mu,\gth,\gg,\Gw)$ such that
		\be
		m_{\xl,2}(y)\leq C \xl^{-\frac{N+\xg}{N+\theta-1}} \quad \forall y\in\xO.\label{step3}
		\ee}
	
	From \eqref{Gmu}, there exists $C=C(N,\mu,\Gw)$ such that, for every $(x,y)\in\xO\times\xO,\;x\neq y$,
	\bel{2.34}
	G_\xm^\Gw(x,y) \leq C \frac{\xd(y)^{\theta   }}{\xd(x)^{\theta   }}|x-y|^{2-N}.
	\ee
	For any $x\in A_{\xl,2}(y)$, by \eqref{2.33} and \eqref{2.34},
	\bel{2.36}
	\xl\leq C|x-y|^{1-N-\theta   } \quad \text{and} \quad \gd(x)^{\gth   }\leq \frac{C}{\xl}|x-y|^{1-N}.
	\ee
	This ensures
	\begin{align*}
	m_{\xl,2}(y)=\int_{A_{\xl,2}(y)}\gd(x)^{\xg}dx\leq \int_{A_{\xl,2}(y)}\left(\frac{C}{\xl}|x-y|^{1-N}\right)^{\frac{\xg}{\theta   }}dx \leq C \xl^{-\frac{N+\xg}{N+\gth -1}},
	\end{align*}
	where in the above inequality we have used the fact that $\xg<\frac{\gth N}{N-1}$. \medskip
	
	\noindent \textbf{Step 4.} \emph{End of proof.}
	
	We infer from \eqref{step1}, \eqref{step2}, \eqref{step3} and \eqref{mainstep} that
	\bel{mgl>1} m_\gl(y) \leq C\gl^{-\frac{N+\gg}{N+\gth-1}}, \quad \forall \gl >C(\Gw,N,\mu)>1. \ee
	From that we can deduce that \eqref{mgl>1} also holds for every $\gl>0$. Therefore by applying Proposition \ref{bvivier} with $D=\xO,$ $\eta=\gd^{\xg}$ with $\xg\geq0$, $d\gw=\xd^{\theta  }d\tau$
	and
	$$H(x,y)=\frac{G_\xm^\Gw(x,y)}{\min\left(\xd(x),|x-y|\right)\xd(y)^\theta},$$
	we obtain
	$$ \| \nabla \BBH[|\gw|] \|_{L_w^{\frac{N+\xg}{N+\theta-1}}(\Gw,\gd^\xg)} \leq c \| \tau   \|_{\GTM(\Gw,\xd^\theta)}.
	$$
	Thus the result follows by \eqref{1}.
\end{proof}

\begin{lemma} \label{gradP} Let $\xg\geq0$.
	Then there exists a positive constant $c=c(N,\mu,\gg,\Gw)$ such that \eqref{gradP2} holds.
\end{lemma}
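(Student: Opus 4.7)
My plan is to mimic the argument of Lemma~\ref{gradGK2}, exploiting a simplification specific to boundary sources: for any $y\in\prt\Gw$ and $x\in\Gw$ one has $\gd(x)\le|x-y|$, so $\min(\gd(x),|x-y|)=\gd(x)$ and the two-case splitting used in Lemma~\ref{gradGK2} collapses to a single regime. First I would observe that $K_\xm^\Gw(\cdot,y)$ is nonnegative and $L_\xm$-harmonic in $\Gw$, so by Lemma~\ref{harmonic} combined with the two-sided estimate~\eqref{Kmu},
\[
|\nabla_x K_\xm^\Gw(x,y)|\;\le\;C\,\frac{K_\xm^\Gw(x,y)}{\gd(x)}\;\le\;C\,\gd(x)^{\xa-1}|x-y|^{2-N-2\xa},\qquad x\in\Gw,\ y\in\prt\Gw.
\]
It therefore suffices to prove that the kernel $H(x,y):=\gd(x)^{\xa-1}|x-y|^{2-N-2\xa}$ generates an operator mapping $\GTM(\prt\Gw)$ into $L^k_w(\Gw,\gd^\xg)$ with $k=(N+\xg)/(N+\xa-1)$.

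Next I would invoke Proposition~\ref{bvivier} with $D=\prt\Gw$, $d\gw=d|\gn|$, and $\eta=\gd^\xg$, which reduces the task to the level-set bound
\[
m_\xl(y):=\int_{A_\xl(y)}\gd(x)^\xg\,dx\;\le\;C\,\xl^{-(N+\xg)/(N+\xa-1)},\qquad y\in\prt\Gw,\ \xl>0,
\]
where $A_\xl(y)=\{x\in\Gw:\,\gd(x)^{1-\xa}|x-y|^{N+2\xa-2}<C/\xl\}$. Because the two exponents sum to $N+\xa-1$, the scaling $\psi=\xl^{1/(N+\xa-1)}x$, $Q_\xl=\xl^{1/(N+\xa-1)}\Gw$ from Step~1 of Lemma~\ref{gradGK2} (corresponding to $\gth=\xa$, and with no factor $\gd(y)^\gth$ since $y\in\prt\Gw$) makes the defining inequality $\xl$-independent in $Q_\xl$. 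Assuming $y=0$ and locally flattening $\prt\Gw$ through a $C^2$ graph $\xG$, the change of variables $z_N=d_\xl(\psi)$, $z'=\psi'$ combined with $|z'|\le|\psi|$ then yields
\[
\int_0^{c_1}z_N^\xg\int_{|z'|\le c_2\,z_N^{(\xa-1)/(N+2\xa-2)}}dz'\,dz_N\;\le\;c_3\int_0^{c_1}z_N^{\xg+(N-1)(\xa-1)/(N+2\xa-2)}\,dz_N.
\]
The contribution to $m_\xl(y)$ from points outside a fixed ball $B(0,2\gb_0)$ is handled analogously to~\eqref{in2} and contributes at most $c\,\xl^{-(\xg+1)/(1-\xa)}$, which is dominated by $\xl^{-(N+\xg)/(N+\xa-1)}$ for $\xl$ large.

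The one point where Lemma~\ref{gradP} departs meaningfully from Lemma~\ref{gradGK2} is the verification that the radial integral above converges for \emph{every} $\xg\ge 0$, without the restriction $\xg<\gth N/(N-1)$ imposed there. This reduces to showing $\xg+(N-1)(\xa-1)/(N+2\xa-2)>-1$, equivalently $\xg(N+2\xa-2)+(N+1)\xa-1>0$, which holds unconditionally for $\xa\in[\tfrac12,1)$ since both summands are positive. With this verified, the hypotheses of Proposition~\ref{bvivier} are met and~\eqref{gradP2} follows.
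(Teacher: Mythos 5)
Your proposal is correct and follows essentially the same route as the paper: the paper's (largely omitted) proof likewise combines Lemma \ref{harmonic} with the Martin kernel estimate \eqref{Kmu} to get the pointwise bound $|\nabla_x K_\mu^\Gw(x,y)|\leq C\gd(x)^{\ga-1}|x-y|^{2-N-2\ga}$, and then repeats the level-set/scaling argument of Lemma \ref{gradGK2} (formally with $\gth=\ga$ and only the first regime, since $\gd(x)\leq|x-y|$ for $y\in\prt\Gw$) before invoking Proposition \ref{bvivier}. Your verification that the radial integral converges for every $\gg\geq 0$ (i.e. $\gg(N+2\ga-2)+(N+1)\ga-1>0$) correctly explains why no restriction analogous to $\gg<\gth N/(N-1)$ appears here, which is exactly the "simpler" part the paper leaves to the reader.
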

\begin{proof}  We use a similar argument as in the proof of Lemma \ref{gradGK2}. By Lemma \ref{harmonic}, there exists $C=C(N,\mu,\xO)>0$ such that
	\bel{2}
	|\nabla_x K_\mu^\Gw(x,y)| \leq \frac{C}{\xd(x)^{1-\xa}|x-y|^{N+2\xa-2}} \quad\forall\;x\in\xO,\, y\in\partial\xO.
	\ee
A similar, and simpler, argument as in the proof of Lemma \ref{gradGK2} justifies \eqref{gradP2} and hence we omit the proof.
\end{proof}

\noindent \textbf{Proof of Proposition A.} The proposition follows from Lemma \ref{gradGK2} and Lemma \ref{gradP}. \qed

\section{Subcritical absorption}
\subsection{Existence} Let $g: \BBR \to \BBR$ be a locally
Lipschitz continuous nonnegative and nondecreasing function vanishing at $0$. In this subsection, we deal with the existence of a solution of \eqref{BP}.

\begin{proof}[{\bf Proof of Theorem B}] \textbf{Step 1:} First we assume that $\sup_{t\in\mathbb{R}}|g(t)|=M<\infty.$ Let $D\subset\subset\xO$ be a smooth open domain and consider the equation
\bel{sub1}
-L_{\xm }v+g\left(\left|\nabla v+\nabla\BBK_{\xm}^\xO[\xn]\right|\right) =0\quad\text{in } D
\ee
First we note that $u_1=0$ is supersolution of \eqref{sub1} and $u_2=-\BBK_{\xm}^\xO[\xn]$ is a solution of \eqref{sub1}. Let
\bel{Tu} T(u):=\left\{ \BAL &0,\quad&&\text{if}\;\;0\leq u,\\
     &u\quad&&\text{if}\;\;u_2\leq u\leq0,\\
      &u_2\quad&&\text{if}\;\;u\leq u_2.
\EAL \right. \ee

In this step we use the idea in \cite{Ka} in order to construct a solution $v\in W^{1,\infty}(D)$ of the following problem
\bel{BP2} \left\{ \BAL
-L_{\xm }v+g\left(\left|\nabla v+\nabla\BBK_{\xm}^\xO[\xn]\right|\right)&=0 \quad &&\text{in } D, \\
v&=0 \quad&&\text{on } \prt D,
\EAL \right. \ee
which satisfies
\be
-\BBK_{\xm}^\xO[\xn]\leq v\leq0\quad\forall x\in D.\label{mainest}
\ee

Let $d_{D,\Gw}:=\dist(\partial D,\partial\xO)$ and $u\in W^{1,1}(D)$. By the standard elliptic theory, there exists a unique solution of the problem
\bel{BP3} \left\{ \BAL
-\xD w+(d_{D,\Gw}^{-2}-\xm \gd^{-2})w&=-g\left(\left|\nabla u+\nabla\BBK_{\xm}^\xO[\xn]\right|\right)+d_{D,\Gw}^{-2}T(u) \quad &&\text{in } D, \\
w&=0 \quad&&\text{on } \prt D.
\EAL \right. \ee
Recall that $\gd=\dist(\cdot,\prt \Gw)$.

We define an operator $\BBA$ as follows: to each $u \in W^{1,1}(D)$, we associate   the unique solution $\BBA[u]$ of \eqref{BP3}. Furthermore, since
$$d_{D,\Gw}^{-2}-\xm \gd(x)^{-2} \geq (1-\xm)\gd(x)^{-2} \quad \forall x \in D,$$
by the standard elliptic estimates we can obtain the existence of a positive constant $C=C(N,\mu,d_{D,\Gw},D)$ such that
\be\label{sup1}
\sup_{x\in D}|\BBA[u](x)|\leq C(M+\| \xn   \|_{\GTM(\partial\Gw)})=:C_1.
\ee
Also, by \eqref{sup1} and standard elliptic estimates, we have that there exists a positive constant $C=C(N,\mu,d_{D,\Gw},D)$ such that
\be\label{sup2}
\sup_{x\in D}|\nabla \BBA[u](x)|\leq C(M+\| \xn   \|_{\GTM(\partial\Gw)})=:C_2.
\ee

We will use the fixed point theorem to prove the existence of a fixed point of $\BBA$ by examining the following criteria.

\textit{We claim that $\BBA$ is continuous}. Indeed, if $u_n\rightarrow u$ in $W^{1,1}(D)$ then since $g \in C(\BBR_+) \cap L^\infty(\BBR_+)$, it follows that $g(|\nabla u_n + \nabla \BBK_\mu^\Gw[\nu]|) \to g(|\nabla u + \nabla \BBK_\mu^\Gw[\nu]|)$ and $T[u_n]\to T[u]$ in $W^{1,1}(D)$. Hence $\BBA[u_n]\to \BBA[u]$ in $W^{1,1}(D )$.

\textit{Next we claim that $\BBA$ is compact}. Indeed, let $\{u_n\}$ be a sequence in $W^{1,1}(D)$ then by \eqref{sup1} and \eqref{sup2}, $\{\BBA[u_n]\}$ is uniformly bounded in $W^{1,\infty}(D)$. Therefore there exists $\psi\in W^{1,p}_{loc}(D) $ and a subsequence still denoted by $\{\BBA[u_n]\}$ such that $\BBA[u_n]\rightarrow \psi$ in $L^p_{loc}(D)$ and
$\nabla \BBA[u_n]\to \nabla \psi $ weakly in $L^p_{loc}(D)$ and a.e. in $D$. By dominated convergence theorem we deduce that $\BBA[u_n]\rightarrow \psi$ in $W^{1,1}(D)$.

Now set
$$ \CK:=\{ \xi \in W^{1,1}(D):  \|  \xi \|_{W^{1,\infty}(D)} \leq C_1+C_2  \}. $$
Then $\CK$ is a closed, convex subset of $W^{1,1}(D)$ and $\BBA(\CK) \sbs \CK$.
Thus we can apply Schauder fixed point theorem to obtain the existence of a function $v \in \CK$ such that $\BBA[v]=v$. This means $v$ is a weak solution of \eqref{BP3}.

By the standard elliptic theory, we can easily deduce that $v,u_2\in C^2(D)\cap C(\overline{D})$. Moreover, it can be seen that $v\leq 0$.

Now we allege that $v\geq u_2$  by employing an argument of contradiction. Suppose $x_0\in D$ is such that
$$\inf_{x\in D}(v(x)-u_2(x))=v(x_0)-u_2(x_0)<0.$$
Then $\nabla v(x_0)=\nabla u_2(x_0)$, $-\xD (v-u_2)(x_0) \leq 0$ and $T[v](x_0)=T[u_2](x_0)=u_2(x_0)$.
But
\begin{align*}
-\xD (v-u_2)(x_0)=-(d_{D,\Gw}^{-2}-\xm \gd(x_0)^{-2})(v(x_0)-u_2(x_0))>0,
\end{align*}
which is clearly a contradiction.

As a consequence, $T(v)=v$ and therefore $v$ is a solution of of \eqref{BP2}.
\medskip

\noindent \textbf{Step 2:} Let $\{\Gw_n\}$ be a smooth exhaustion of $\Gw$ and let $v_n$ be the solution of \eqref{BP2} in $D=\xO_n$ satisfying \eqref{mainest}.
Then
\bel{est}
|v_n(x)|
\leq \BBG_\mu^{\Gw}\left[\chi_{\xO_n}g\left(\left|\nabla v_n+\nabla\BBK_{\xm}^\xO[\xn]\right|\right)\right](x) \leq CM\gd(x)^\xa \quad\forall x\in\xO_n,
\ee
for some positive constant $C=C(N,\xm,\Gw)$, where $M=\sup_{t \in \BBR}|g(t)|$. This  implies that there exists a subsequence, still denoted by $\{v_n\},$ such that $v_n\rightarrow v$ in $W^{1,p}_{loc}(\xO)$ and $v$ satisfies
\bel{BP4} \left\{ \BAL
-L_{\xm }v+g\left(\left|\nabla v+\nabla\BBK_{\xm}^\xO[\xn]\right|\right)&=0 \quad \text{in } \xO, \\
\tr(v)&=0 .
\EAL \right. \ee
Furthermore,
\be
-\BBK_{\xm}^\xO[\xn]\leq v\leq0 \quad\forall x\in \Gw.\label{mainest1}
\ee
Setting $u=v+\BBK_{\xm}^\xO[\xn],$ then $u$ is a solution of  \eqref{BP} satisfying $0 \leq u \leq \BBK_\mu^\Gw[\nu]$ in $\Gw$. \medskip

\noindent \textbf{Step 3:} Set $g_n:=\min(g,n)$ and let $u_n$ be a nonnegative solution of
\be\nonumber\left\{ \BAL
-L_\mu u_n+g_n(|\nabla u_n|)&=0 \quad\text{in } \Gw, \\
\tr(u_n)&=\nu ,
\EAL \right. \ee
satisfying
\bel{bound11} 0 \leq u_n \leq \BBK_\mu^\Gw[\nu] \quad \text{in } \Gw. \ee
Then $u_n$ satisfies
\bel{M99}- \int_{\Gw}{}u_n L_\gm\zeta \, dx + \int_{\Gw}{} g_n (|\nabla u_n|) \zeta \, dx = - \int_{\Gw}{}\BBK_\gm[ \gn] L_\gm \zeta \, dx \quad \forall \zeta \in{\mathbf X_\xm}(\Gw),
\ee
\be
u_n +\BBG_{\xm}^\xO[g_n(|\nabla u_n|)]=\BBK_{\xm}^\xO[\nu]. \label{repr}
\ee

Choosing $\zeta=\vgf_\mu$, we have by \eqref{poi4}
\be\label{poi4bb}
\xl_\xm\int_{\Gw}{}|u_n|\vgf_\mu dx+ \int_{\Gw}{}g_n(|\nabla u_n|)\vgf_\mu dx\leq
\xl_\xm\int_{\Gw}{}\mathbb{K}_{\xm}^\xO[|\xn|]\vgf_\mu dx.
\ee
Now by \eqref{repr}, Lemma \ref{gradGK2} and Lemma \ref{gradP} we obtain
\bel{gradG3b} \| \nabla u_n \|_{L_w^{q_\mu}(\Gw,\gd^\xa)} \leq c(N,\xm,\Gw) (\| g_n(|\nabla u_n|)   \|_{L^1(\Gw,\gd^\xa)}+\| \xn   \|_{\GTM(\partial\Gw)}).
\ee
Thus by \eqref{poi4bb} we have
\bel{gradG3bb} \| \nabla u_n \|_{L_w^{q_\mu}(\Gw,\gd^\xa)} \leq c(N,\xm,\Gw) \| \xn   \|_{\GTM(\partial\Gw)},
\ee
Similarly we can show that
\bel{G3bb} \|  u_n \|_{L_w^{q_\mu}(\Gw,\gd^\xa)} \leq c(N,\xm,\Gw) \| \xn   \|_{\GTM(\partial\Gw)}.
\ee
By \ref{weakin2}, for any $1<p<q_\mu$, $\{u_n\}$ is uniformly bounded in $W^{1,p}(\xO,\gd^\ga)$. Thus there exist $u\in W^{1,p}_{loc}(\xO) $ and a subsequence still denoted by $\{u_n\}$ such that $u_n\to u$ a.e. in $\Gw$ and
$\nabla u_n\to \nabla u $ a.e. in $\Gw$. Then \eqref{bound11} and the dominated convergence theorem guarantees that $u_n\rightarrow u$ in $L^1(\xO,\xd^\xa)$.

For $s>0$, set $E_n(s)=\{x\in \Gw:|\nabla u_n(x)|>s\}$. Then by \eqref{ue} and \eqref{G3bb},
$$e_n(s):=\int_{E_n(s)}\gd^\ga dx\leq s^{-q_\mu}\left(c(\xO,N,\xm) \| \xn   \|_{\GTM(\partial\Gw)}\right)^{q_\mu}.$$
Let $G\subset\Gw$ be a Borel subset. Then for any $s_0>0$
\begin{align*}
\int_{G} |g_n(|\nabla u_n|)|\gd^\ga dx&\leq \int_{G} |g(|\nabla u_n|)|\gd^\ga dx \\
&\leq g(s_0)\int_{G} \gd^\ga dx+\int_{E_s(u_n)} g\left(|\nabla u_n|\right)\gd^\ga dx\\
&\leq g(s_0)\int_{G}{}\gd^\ga dx-\int_{s_0}^{\infty}g(s)de_n(s).
\end{align*}
But
\begin{align*}
-\int_{s_0}^{\infty}g(s)de_n(s)
\leq g(s_0)e_n(s_0)&+c \|\gn\|_{\mathfrak M(\partial\xO)}^{q_\mu}
\int_{s_0}^{\infty}s^{-q_\mu}dg(s)\leq c \|\gn\|_{\mathfrak M(\partial\xO)}^{q_\mu}\int_{s_0}^{\infty}s^{-q_\mu-1}g(s)ds.
\end{align*}
Thus we have proved
\begin{align*}
\int_{G}|g_n(|\nabla u_n|)|\ei dx&\leq g(s_0)\int_{G}\gd^\ga dx+ c\|\gn\|_{\mathfrak M(\partial\xO)}^{q_\mu}\int_{s_0}^{\infty}s^{-q_\mu-1}g(s)ds.
\end{align*}

We obtain easily, using \eqref{G1} and fixing $s_0$ first, that for any $\ge>0$, there exists $\gk>0$ such that
 \begin{equation}\label{M14}
\int_{G} \gd^\ga dx\leq \gk \Longrightarrow \int_{G} |g_n(|\nabla u_n|)|\gd^\ga dx\leq\ge.
\end{equation}
Thus we invoke Vitali convergence theorem to derive that $g_n(|\nabla u_n|)\rightarrow g(|\nabla u|)$ in $L^1(\xO,\gd^\ga )$. From \eqref{bound11} we deduce that $0\leq u_n\rightarrow u$ in $L^1(\xO,\xd^\xa )$. Letting $n\to\infty$ in identity \eqref{M99}, we deduce that $u$ is a weak solution of \eqref{BP}.
\end{proof}

The next results asserts that any weak solution $u$ of \eqref{BP} behaves like $\BBK_\mu^\Gw[\nu]$ on the boundary.

\begin{proof}[{\bf Proof of Proposition C}] Since $u$ is a nonnegative solution of \eqref{BP}, formulation \eqref{uGK} holds. By a similar argument as in \cite[Proposition I]{MaNg}, $\BBG_\mu^\Gw[g(|\nabla u|)]$ is an $L_\mu$ potential (i.e. $\BBG_\mu^\Gw[g(|\nabla u|)]$ does not dominate any positive $L_\mu$ harmonic function). Consequently, in view of \cite[Theorem 2.6]{MaNg},
$$ \lim_{x \to y}\frac{\BBG_\mu^\Gw[g(|\nabla u|)](x)}{\BBK_\mu^\Gw[\nu](x)}= 0 \quad \text{non tangentially},\, \text{for } \nu\text{-a.e. } y \in \prt \Gw.
$$
This and \eqref{uGK} imply \eqref{nontang}.
\end{proof}

\subsection{Regularity} This subsection is devoted to the regularity property of distributional solutions.
\begin{definition} \label{subsupD} Let $D$ be a subdomain of $\Gw$. A function $u$ is called a (distributional) subsolution (resp. supersolution) of
	\bel{eqD} - L_\mu u + g(|\nabla u|) = 0 \quad \text{in } D\ee
if $u, g(|\nabla u|) \in L_{loc}^1(D)$ and
\bel{subD} \int_D (- u \,L_\mu \zeta + g(|\nabla u|)\zeta )dx \leq  (\text{resp.} \geq)\,\,\, 0 \quad \forall \zeta \in C_c^\infty(D),\, \zeta \geq 0. \ee
A (distributional) solution in $D$ is a distributional subsolution and supersolution in $D$.
\end{definition}

\begin{lemma} \label{regularity}
Let $1\leq q< \frac{N}{N-1}$ and assume that $g$ is a locally Lipschitz function satisfying
\be |g(t)|\leq C (t^q+1), \quad \forall t \geq 0,\label{G5} \ee
for some $C>0$.
If $u$ is a distributional solution of \eqref{E} then $u\in C^2(\xO)$.
\end{lemma}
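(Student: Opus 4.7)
The plan is to argue by a local bootstrap, reducing the equation to a standard semilinear Laplace equation away from $\prt\Omega$. Fix any subdomain $D\subset\subset\Omega$; on $D$ the weight $\mu/\delta^2$ is smooth and bounded, so the distributional identity reads
\[ -\Delta u = \frac{\mu}{\delta^2}\,u - g(|\nabla u|) =: f \quad \text{in } D. \]
From Definition \ref{subsupD} together with \eqref{G5}, I already have $u,\,g(|\nabla u|)\in L^1_{loc}(D)$ and hence $|\nabla u|\in L^q_{loc}(D)$, so in particular $f\in L^1_{loc}(D)$.

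First I would invoke a Riesz potential estimate for $-\Delta u=f\in L^1_{loc}(D)$ to gain $|\nabla u|\in L^r_{loc}(D)$ for every $r<N/(N-1)$. This is where the hypothesis $q<N/(N-1)$ is used decisively: it leaves room strictly above $q$ on the $L^r$-scale, so one can select $p_1$ with $q<p_1<N/(N-1)$. I would then iterate: given $|\nabla u|\in L^{p_k}_{loc}(D)$, \eqref{G5} gives $g(|\nabla u|)\in L^{p_k/q}_{loc}$; combined with Sobolev control on $u$ itself (via $W^{1,p_k}_{loc}\hookrightarrow L^{p_k^*}_{loc}$), this yields $f\in L^{s_k}_{loc}$ with $s_k:=p_k/q>1$. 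The Calder\'on--Zygmund $W^{2,s_k}$ estimate followed by Sobolev embedding then gives $|\nabla u|\in L^{p_{k+1}}_{loc}$ with $p_{k+1}=Np_k/(qN-p_k)$ provided $s_k<N$.

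The key arithmetic is the verification that $p_{k+1}>p_k$, which reduces to $p_k>N(q-1)$; this holds at the initial step precisely because $q>N(q-1)$ is equivalent to the hypothesis $q<N/(N-1)$, and the inequality is preserved throughout since $N(q-1)$ is a repelling fixed point of $p\mapsto Np/(qN-p)$. Hence $\{p_k\}$ strictly increases and must exceed $qN$ in finitely many steps, at which stage $s_k>N$ and Morrey's embedding promotes $|\nabla u|$ to $L^\infty_{loc}(D)$. Then $f\in L^\infty_{loc}(D)$, so $u\in W^{2,p}_{loc}(D)$ for every finite $p$, hence $u\in C^{1,\alpha}_{loc}(D)$ for every $\alpha\in(0,1)$. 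Because $g$ is locally Lipschitz and $\mu/\delta^2$ is smooth on $D$, $f$ is then H\"older continuous; classical Schauder theory yields $u\in C^{2,\alpha}(D)$. Since $D\subset\subset\Omega$ is arbitrary, $u\in C^2(\Omega)$.

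The main obstacle is the very first bootstrap step, where $f$ lies only in $L^1_{loc}$ and the standard $W^{2,p}$ estimates do not apply; there one must invoke the Riesz potential representation (equivalently the $W^{1,r}$ estimate for $r<N/(N-1)$) to gain any integrability of $\nabla u$ strictly beyond the trivial $L^q$. After that initial gain the iteration is routine, and one need only verify that $\{p_k\}$ does not stall at the unstable fixed point $N(q-1)$, which is guaranteed by starting strictly above it.
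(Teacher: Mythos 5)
Your bootstrap is correct and reaches the same conclusion, but it is organized around abstract Calder\'on--Zygmund and Sobolev estimates rather than the paper's explicit kernel representation. The paper's proof works on shrinking balls $B(x_0,r_0/2^i)$, writes $u$ via the Green and Poisson kernels of $-\Delta$ on those balls, uses \cite[Theorems 2.5--2.6]{BVi} for the initial integrability gain $u,|\nabla u|\in L^p_{loc}$ with $q<p<\frac{N}{N-1}$, and then runs an explicit H\"older-inequality induction on the kernel integrals to obtain the geometric gain $p_{i+1}=(p/q)\,p_i$. Your iteration instead freezes the right-hand side $f=\mu u/\delta^2-g(|\nabla u|)$, applies $W^{2,s}$ elliptic regularity and Sobolev embedding, and tracks the exponent map $p\mapsto Np/(Nq-p)$ with its repelling fixed point $N(q-1)$; that $N(q-1)<q<N/(N-1)$ is exactly the threshold both arguments hinge on. The two routes are equivalent in strength; yours is more modular and black-box, the paper's is more self-contained since it directly estimates the same Green-function integrals it needs later.

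One small slip at the outset: you write that $g(|\nabla u|)\in L^1_{loc}$ together with \eqref{G5} gives $|\nabla u|\in L^q_{loc}$. That implication would require a \emph{lower} bound $g(t)\gtrsim t^q-C$, whereas \eqref{G5} is only an upper bound, so it does not follow. It is also unnecessary: your actual first step, the Riesz-potential gain from $-\Delta u=f\in L^1_{loc}$ yielding $|\nabla u|\in L^r_{loc}$ for every $r<\frac{N}{N-1}$, requires only $f\in L^1_{loc}$, which is immediate from $u,g(|\nabla u|)\in L^1_{loc}$ and the boundedness of $\mu/\delta^2$ on $D$. Remove the spurious deduction and the argument stands. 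You should also explicitly note that one may choose the $p_k$ so as never to hit $s_k=N$ exactly, to avoid the borderline case in the Sobolev embedding.
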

\begin{proof} Let $x_0\in\xO$ and $r_0=\gd(x_0)$. Let $G_i(x,y)$ and $K_i(x,y)$ be the Green kernel and Poisson Kernel of $-\xD$ in $B(x_0,\frac{r_0}{2^i})$ respectively.
Since $u,g(|\nabla u|)\in L^1(B(x_0,\frac{r_0}{2}))$, it follows that for any $x\in B(x_0,\frac{r_0}{2^i})$,
\bel{repi} \BAL
u(x)=\xm\int_{B(x_0,\frac{r_0}{2^i})} G_i(x,y)\frac{u(y)}{\gd(y)^2}dy &-\int_{B(x_0,\frac{r_0}{2^i})}G_i(x,y)g(|\nabla u(y)|)dy \\
&+\int_{\partial B(x_0,\frac{r_0}{2^i})} K_i(x,y)u(y)dS_y.
\EAL \ee
From the above formula, for any $x\in B(x_0,\frac{r_0}{2^i})$,
\bel{bound} \BAL
|\nabla u(x)|\leq \int_{B(x_0,\frac{r_0}{2^i})} |\nabla_x G_i(x,y)|\frac{|u(y)|}{\gd(y)^2}dy &+ \int_{B(x_0,\frac{r_0}{2^i})}|\nabla_x G_i(x,y)|g(|\nabla u(y)|)dy \\
&+\left|\nabla_x\int_{\partial B(x_0,\frac{r_0}{2^i})} u(y) K_i(x,y)dS_y\right|.
\EAL \ee
Combining \eqref{repi}, \eqref{bound}, \cite[Theorem 2.5 and Theorem 2.6]{BVi} and the fact that \bel{KinC2} \int_{\partial B(x_0,\frac{r_0}{2^i})} K_i(x,y)u(y)dS_y\in C^2(B(x_0,\frac{r_0}{2^i})),
\ee
we deduce that $u,\, |\nabla u| \in L^p(B(x_0,\frac{r_0}{2^2}))$ for any $q<p<\frac{N}{N-1}$.

Let $q<p<\frac{N}{N-1}$ and put $p_i:=(\frac{p}{q})^{i-1}p,\;i\in\mathbb{N}$.

\textbf{Claim:} There hold
\bel{induction} u, |\nabla u| \in L^{p_i}(B(x_0,\frac{r_0}{2^{i+1}})) \quad \forall i \in \BBN. \ee

We will prove the claim  by induction. Indeed, \eqref{induction} holds for $i=1$. Suppose that \eqref{induction} true for some $i \in \BBN$. We will show that \eqref{induction} holds for $i+1$. By \cite[Proposition 2.1]{BVi}, for every $x,y\in  B(x_0,\frac{r_0}{2^i}),\;x\neq y$,
\bel{Gi}
G_i(x,y)\leq C_i |x-y|^{-N+2}\quad\text{and}\quad |\nabla_x G_i(x,y)| \leq C_i |x-y|^{-N+1}.
\ee
From \eqref{repi}--\eqref{Gi} and the assumption on $g$,  we can easily obtain the following estimates for any $x\in B(x_0,\frac{r_0}{2^{i+2}})$
\begin{align}\label{bound2}
|u(x)|&\leq C_i\left(\int_{B(x_0,\frac{r_0}{2^{i+1}})}\frac{|u(y)|}{|x-y|^{N-2}}dy +\int_{B(x_0,\frac{r_0}{2^{i+1}})}\frac{|\nabla u(y)|^q}{|x-y|^{N-2}}dy+1\right),\\
|\nabla u(x)|&\leq C_i\left(\int_{B(x_0,\frac{r_0}{2^{i+1}})}\frac{|u(y)|}{|x-y|^{N-1}}dy +\int_{B(x_0,\frac{r_0}{2^{i+1}})}\frac{|\nabla u(y)|^q}{|x-y|^{N-1}}dy+1\right).\label{bound3}
\end{align}
By \eqref{bound2} and Holder inequality, for every $x \in B(x_0,\frac{r_0}{2^{i+2}})$,
\begin{align*}
|u(x)|^{p_{i+1}} \leq C_i \left(\int_{B(x_0,\frac{r_0}{2^{i+1}})}\frac{|u(y)|^{p_i}}{|x-y|^{p(N-2)}}dy +\int_{B(x_0,\frac{r_0}{2^{i+1}})}\frac{|\nabla u(y)|^{p_i}}{|x-y|^{p(N-2)}}dy+1\right).
\end{align*}
By integrating over $B(x_0,\frac{r_0}{2^{i+2}})$ and keeping in mind that $p<\frac{N}{N-1}$, we obtain
\begin{align*}
\int_{B(x_0,\frac{r_0}{2^{i+2}})}|u(x)|^{p_{i+1}}dx&\leq C_i \Big( \int_{B(x_0,\frac{r_0}{2^{i+2}})}\int_{B(x_0,\frac{r_0}{2^{i+1}})}\frac{|u(y)|^{p_i}}{|x-y|^{p(N-2)}}dydx \\ &+ \int_{B(x_0,\frac{r_0}{2^{i+2}})}\int_{B(x_0,\frac{r_0}{2^{i+1}})}\frac{|\nabla u(y)|^{p_i}}{|x-y|^{p(N-2)}}dydx+1 \Big) \\
&\leq C_i.
\end{align*}
Similarly, since $p<\frac{N}{N-1}$, we deduce from \eqref{bound3} that
\begin{align*}
\int_{B(x_0,\frac{r_0}{2^{i+2}})}|\nabla u(x)|^{p_{i+1}}dx&\leq C_i\Big( \int_{B(x_0,\frac{r_0}{2^{i+2}})}\int_{B(x_0,\frac{r_0}{2^{i+1}})}\frac{|u(y)|}{|x-y|^{p(N-1)}}dydx \\ &+\int_{B(x_0,\frac{r_0}{2^{i+2}})}\int_{B(x_0,\frac{r_0}{2^{i+1}})}\frac{|\nabla u(y)|^q}{|x-y|^{p(N-1)}}dydx+1\Big)\\
&\leq C_i.
\end{align*}
Therefore \eqref{induction} holds true for $i+1$. Thus we have proved the claim.

Now fix $i$ large enough such that $\frac{qp}{p-1}<p_{i-1}.$ Then $u,|\nabla u|\in L^{p_{i-1}}(B(x_0,\frac{r_0}{2^{i}}))$. We will estimate the terms on the right hand-side of \eqref{repi}. By \eqref{Gi} and Holder inequality,
\bel{mi1} \BAL \int_{B(x_0,\frac{r_0}{2^i})} G_i(x,y)\frac{|u(y)|}{\gd(y)^2}dy
&\leq C_i \left(\int_{B(x_0,\frac{r_0}{2^{i}})}\frac{1}{|x-y|^{p(N-2)}}dy\right)^{\frac{1}{p}} \left(\int_{B(x_0,\frac{r_0}{2^{i}})}|u|^{\frac{p}{p-1}}dy\right)^{\frac{p-1}{p}} \leq C_i.
\EAL \ee
Similarly, by  \eqref{Gi}, Holder inequality and the assumption \eqref{G5},
\bel{mi2} \BAL
\int_{B(x_0,\frac{r_0}{2^{i}})} &G_{i}(x,y)g(|\nabla u(y)|)dy\leq C_i \left( \int_{B(x_0,\frac{r_0}{2^{i}})}\frac{|\nabla u(y)|^q}{|x-y|^{N-1}}dy +1 \right)\\
&\leq C_i\left(\int_{B(x_0,\frac{r_0}{2^{i}})}\frac{1}{|x-y|^{p(N-1)}}dy\right)^{\frac{1}{p}}\left(\int_{B(x_0,\frac{r_0}{2^{i}})}|\nabla u|^{\frac{qp}{p-1}}dy\right)^{\frac{p-1}{p}} \leq C_i.
\EAL \ee
Combining \eqref{mi1}, \eqref{mi2} and \eqref{KinC2}, we obtain that $u\in L^{\infty}(B(x_0,\frac{r_0}{2^{i+1}}))$. By a similar argument, one can show that $|\nabla u| \in L^{\infty}(B(x_0,\frac{r_0}{2^{i+1}}))$. Thus the desired regularity result follows by standard elliptic regularity theory.
\end{proof}

\subsection{Comparison principle}

\begin{lemma} \label{positive} Let $g$ be a locally Lipschitz function. If $u\in C^2(\xO)$ is a nonnegative solution
\be\label{E}
-L_\mu u+g(|\nabla u|)=0 \quad\text{in } \Gw.
\ee
and there exists $x_0\in\xO$ such that $u(x_0)=0$ then $u\equiv 0$.
\end{lemma}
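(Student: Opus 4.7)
\noindent\textit{Proof plan for Lemma 4.5.}
The strategy is to linearize the gradient nonlinearity to exhibit $u$ as a nonnegative supersolution of a linear uniformly elliptic equation with bounded drift and no zeroth-order term, and then invoke the classical strong minimum principle together with a connectedness argument. The key observation is that, because $u\geq 0$ and $\xm>0$, the Hardy term $(\xm/\gd^{2})u$ is nonnegative and can safely be kept on the right-hand side of the rewritten equation.

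First, I would define
\[
b(x):= \begin{cases} \dfrac{g(|\nabla u(x)|)}{|\nabla u(x)|^{2}}\,\nabla u(x) & \text{if } \nabla u(x)\neq 0,\\[4pt] 0 & \text{if } \nabla u(x)=0,\end{cases}
\]
so that $g(|\nabla u(x)|)=b(x)\cdot\nabla u(x)$ for every $x\in\Gw$. Since $u\in C^{2}(\Gw)$ the field $\nabla u$ is locally bounded, and since $g$ is locally Lipschitz with $g(0)=0$ we have $|g(t)|\leq Lt$ on any bounded interval $[0,M]$; consequently $b\in L^{\infty}_{\mathrm{loc}}(\Gw;\BBR^{N})$. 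Rewriting \eqref{E} we obtain
\[
-\Gd u + b(x)\cdot\nabla u \;=\; \frac{\xm}{\gd^{2}}\,u \;\geq\;0 \quad\text{in } \Gw,
\]
so $u$ is a nonnegative classical supersolution of the linear operator $\mathcal{L}_{0}:=-\Gd + b\cdot\nabla$.

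Next, I would argue locally around any zero of $u$. Set $E:=\{x\in\Gw:u(x)=0\}$; this set is closed in $\Gw$ by continuity of $u$ and nonempty by hypothesis. To prove that $E$ is also open, fix $x_{\ast}\in E$ and choose $r>0$ with $\overline{B_{r}(x_{\ast})}\subset\Gw$. On this closed ball, $\gd$ is bounded below by a positive constant and $\nabla u$ is bounded, so $\mathcal{L}_{0}$ has bounded coefficients on $B_{r}(x_{\ast})$ and is uniformly elliptic there. Since $u\geq 0$ attains its minimum value $0$ at the interior point $x_{\ast}$ and $\mathcal{L}_{0}u\geq 0$, the classical strong minimum principle for linear elliptic operators without zeroth-order term (see e.g.\ \cite[Thm.\ 3.5]{GT}) forces $u\equiv 0$ on $B_{r}(x_{\ast})$. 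Hence $E$ is open; by connectedness of $\Gw$ we conclude $E=\Gw$, i.e., $u\equiv 0$.

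The only genuine technical ingredient is the linearization $g(|\nabla u|)=b\cdot\nabla u$ with $b$ locally bounded, which brings the problem into a standard linear framework; once this is done the Hardy potential plays no role because it contributes only a nonnegative right-hand side, which is exactly the form required by the strong minimum principle.
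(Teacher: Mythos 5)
Your proposal is correct and follows essentially the same route as the paper: rewrite $g(|\nabla u|)=b\cdot\nabla u$ with $b$ locally bounded (using $g(0)=0$ and the local Lipschitz property), observe that $u$ becomes a nonnegative classical supersolution of the linear drift operator $-\Gd + b\cdot\nabla$ because the Hardy term contributes a nonnegative right-hand side, and invoke the strong minimum principle. The only cosmetic difference is that you propagate the zero set by an open-and-closed connectedness argument, whereas the paper works in the subdomains $D_\gb$ and concludes as $\gb\to 0$; both are fine.
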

\begin{proof} Since $g$ is locally Lipschitz function and $u\in C^2(\xO)$ we can write
$$g(|\nabla u|)=\textbf{b}(x)\nabla u \quad\text{a.e. in}\; \xO$$
where ${\bf b}$ has the following property: for any $\gb \in (0,\gb_0)$, there exists $C_\gb$ such that
$$\sup_{x\in D_\xb}|\textbf{b}(x)|\leq C_\gb.$$
Let $\xb \in (0,\gb_0)$ be small enough such that $x_0\in D_\xb$. Note that $u$ is a nonnegative solution of
$$
-\Gd u+\textbf{b}\nabla u= \frac{\mu}{\gd^2}u\geq0 \quad\text{in } D_\xb.
$$
Thus, by the maximum principle, $u$ cannot achieve a nonpositive minimum in $D_\xb.$ Thus the result follows straight forward.
\end{proof}


Next we state the comparison principle for \eqref{eqD}.

\begin{lemma} \label{comp1} Let $g$ be a locally Lipschitz function and satisfy \eqref{G3}. We assume that $D \sbs \Gw$ and $u_1, u_2\in C^2(D)$ are respectively nonnegative subsolution and positive supersolution of \eqref{eqD} in $D$ such that
\bel{bdryD} \lim_{x \to \prt D}(u_1(x) - u_2(x))<0. \ee
Then $u_1 \leq u_2$ in $D$.
\end{lemma}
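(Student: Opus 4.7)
The plan is to argue by contradiction via a sliding argument powered by \eqref{G3}. Assume $\sup_D(u_1-u_2)>0$. The strict boundary condition \eqref{bdryD} yields $\eta>0$ with $u_2-u_1>\eta$ in a neighborhood of $\partial D$, which combined with $u_1\ge 0$ gives $u_2>\eta$ there; since $u_2$ is continuous and positive on every compact subset of $D$ and $u_1$ is locally bounded, the ratio $u_1/u_2$ is bounded on $D$. I would then set $\lambda^*:=\inf\{\lambda\ge 1:\lambda u_2\ge u_1 \text{ in } D\}$, which is finite; \eqref{G3} in the equivalent form $g(\lambda t)\ge\lambda g(t)$ for $\lambda\ge 1$, $t\ge 0$ ensures that $\lambda u_2$ is itself a positive supersolution for every $\lambda\ge 1$. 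The contradiction hypothesis gives $\lambda^*>1$.

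Next I would examine $v:=\lambda^* u_2-u_1\ge 0$. Its infimum over $D$ is zero by minimality of $\lambda^*$, and $\liminf_{x\to\partial D}v\ge(\lambda^*-1)\eta>0$, so the infimum is attained at some interior point $x^*\in D$. At $x^*$ the condition $\nabla v(x^*)=0$ gives $|\nabla u_1(x^*)|=\lambda^*|\nabla u_2(x^*)|$, whence \eqref{G3} yields $g(|\nabla u_1(x^*)|)\ge\lambda^* g(|\nabla u_2(x^*)|)$. Subtracting $\lambda^*$ times the supersolution inequality for $u_2$ from the subsolution inequality for $u_1$ gives $-L_\mu v\ge g(|\nabla u_1|)-\lambda^* g(|\nabla u_2|)$ on $D$, and decomposing
\[
g(|\nabla u_1|)-\lambda^* g(|\nabla u_2|)=\bigl[g(|\nabla u_1|)-g(\lambda^*|\nabla u_2|)\bigr]+\bigl[g(\lambda^*|\nabla u_2|)-\lambda^* g(|\nabla u_2|)\bigr],
\]
controlling the first bracket by the local Lipschitz property of $g$ (using $\bigl|\,|\nabla u_1|-\lambda^*|\nabla u_2|\,\bigr|\le|\nabla v|$) and using \eqref{G3} for the second, I would obtain $-L_\mu v\ge-L|\nabla v|$ on every compact subset of $D$, where $L$ is a local Lipschitz constant for $g$.

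Rearranging and discarding the nonnegative term $\mu v/\delta^2$ (legitimate since $v\ge 0$), this reads $\Delta v\le L|\nabla v|$ in a neighborhood of $x^*$. Setting $u:=-v\le 0$ and the bounded measurable drift $\vec b(x):=L\nabla u(x)/|\nabla u(x)|$ where $\nabla u\ne 0$, extended by $0$ elsewhere, I would rewrite the inequality as $\Delta u+\vec b\cdot\nabla u\ge 0$, a linear uniformly elliptic subsolution inequality with bounded drift and zero zero-order coefficient. The classical strong maximum principle then forces $u\equiv 0$ on every ball around $x^*$ contained in $D$, since $u$ attains its nonpositive maximum $0$ at the interior point $x^*$; propagating along chains of balls and using the connectedness of $D$ yields $u\equiv 0$, i.e.\ $u_1\equiv\lambda^* u_2$, on all of $D$. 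But then $\limsup_{x\to\partial D}(u_1-u_2)=(\lambda^*-1)\limsup_{x\to\partial D}u_2\ge(\lambda^*-1)\eta>0$, contradicting \eqref{bdryD}. The main obstacle is the Hardy potential $\mu/\delta^2$, which has the unfavorable sign for a direct maximum principle on $u_1-u_2$; the sliding argument circumvents this by selecting $\lambda^*$ so that the Hardy term vanishes exactly at the tangency point $x^*$, reducing the remaining inequality to a classical drift-diffusion problem.
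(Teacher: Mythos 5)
Your proof is correct and takes essentially the same route as the paper's: both arguments normalize to a touching configuration (you scale $u_2$ up to $\lambda^* u_2$ while the paper scales $u_1$ down to $m^{-1}u_1$, with $m=\lambda^*$), combine \eqref{G3} with the local Lipschitz property of $g$ to reduce to a linear drift inequality, exploit the favorable sign of the Hardy term on $\lambda^* u_2-u_1\ge 0$, and invoke the strong maximum principle at the interior touching point. One minor caution: your closing sentence attributes the removal of the Hardy term to its vanishing at $x^*$, whereas the actual reason, which your proof body states correctly, is that $\mu v/\delta^2\ge 0$ holds throughout and can therefore be discarded globally before applying the maximum principle.
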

\begin{proof}
Suppose by contradiction that
$$ \sup_{x \in D}\frac{u_1(x)}{u_2(x)}=:m > 1. $$

By \eqref{bdryD}, we deduce that there exists $x_0 \in D$ such that
$$ \frac{u_1(x_0)}{u_2(x_0)}=\sup_{x \in D}\frac{u_1(x)}{u_2(x)}=m. $$

Let $r>0$ be such that $B(x_0,r)\subset D$. Then we see that
\begin{align*}
- \Gd (m^{-1}u_1-u_2)+ g(m^{-1}|\nabla u_1|) - g(|\nabla u_2|)\leq \frac{\mu}{\gd^2} (m^{-1}u_1-u_2) \leq0\quad \text{in } B(x_0,\frac{r}{2}).
\end{align*}

Since $g$ is locally Lipschitz, we can write
$$g(m^{-1}|\nabla u_1|) - g(|\nabla u_2|)=\textbf{b}\nabla \left(m^{-1}u_1-u_2\right) \quad\text{in  } B(x_0,\frac{r}{2}),$$
where ${\bf b}$ satisfies $\sup_{x\in B(x_0,\frac{r}{2})}|\textbf{b}(x)|\leq C$.
Hence,
\begin{align*}
- \Gd (m^{-1}u_1-u_2)+ {\bf b}\nabla \left(m^{-1}u_1-u_2\right)\leq 0 \quad \text{in } B(x_0,\frac{r}{2}),
\end{align*}
and by maximum principle $m^{-1}u_1-u_2$ can not achieve a non-negative maximum in $B(x_0,\frac{r}{2}).$
This is a contradiction. Thus $u_1 \leq u_2$ in $D$.
\end{proof}

Next we will prove the comparison principle for \eqref{BP}.

In order to demonstrate Theorem D, we need the following auxiliary result.

\begin{lemma} \label{lem2} Let $\gt \in \GTM(\Gw,\gd^\ga)$ and $v\geq 0$ satisfies
\bel{b8} \left\{ \BAL - L_\mu v &\leq \gt \quad \text{in } \Gw, \\
 \tr(v) &=0
\EAL \right. \ee
Then for any $1<q<q_\mu$, there exists a constant  $c=c(N,\Gw,\mu)$ such that
$$\norm{\nabla v}_{L^q(\Gw,\gd^\ga)} \leq c\norm{\gt}_{\GTM(\Gw,\gd^\ga)}. $$
\end{lemma}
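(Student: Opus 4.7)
\textit{Proof proposal.} The strategy is to represent $v$ as a Green potential of a signed measure of controlled total mass, and then to invoke the weak-$L^{q_\mu}$ gradient estimate of Proposition~A combined with the Marcinkiewicz-to-$L^q$ embedding \eqref{weakin2}.

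\textbf{Decomposition.} Since $v\ge 0$ and $-L_\mu v\le \tau$ distributionally, the distribution $\sigma:=\tau+L_\mu v$ is a nonnegative distribution, and hence by the Riesz representation theorem a nonnegative Radon measure on $\Omega$. Thus
\[
-L_\mu v=\tau-\sigma\qquad\text{in }\mathcal{D}'(\Omega).
\]

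\textbf{Controlling the mass of $\sigma$.} The key step is the estimate
\[
\|\sigma\|_{\mathfrak{M}(\Omega,\delta^{\alpha})}\le c\,\|\tau\|_{\mathfrak{M}(\Omega,\delta^{\alpha})}.
\]
Formally, since $\vgf_\mu$ satisfies $-L_\mu\vgf_\mu=\lambda_\mu\vgf_\mu$ with $\lambda_\mu>0$ by \eqref{eig} and $\vgf_\mu\approx\delta^{\alpha}$ by \eqref{Lin1}--\eqref{Lin2}, pairing the identity $-L_\mu v=\tau-\sigma$ against $\vgf_\mu$ and using $v\ge 0$ gives
\[
\int_\Omega\vgf_\mu\,d\sigma=\int_\Omega\vgf_\mu\,d\tau-\lambda_\mu\int_\Omega v\,\vgf_\mu\,dx\le\int_\Omega\vgf_\mu\,d|\tau|,
\]
and the desired bound follows. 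The main obstacle is the rigorous justification of this pairing, because $\vgf_\mu\notin\mathbf{X}_\mu(\Omega)$ (and not even in $H^1_0(\Omega)$ when $\mu=\tfrac14$). I would circumvent it by approximating $\vgf_\mu$ by $\chi_n\vgf_\mu\in C_c^\infty(\Omega)$, with $\chi_n\nearrow 1$ along a smooth exhaustion $\{\Omega_n\}$ of $\Omega$, testing the distributional identity against $\chi_n\vgf_\mu$, expanding $L_\mu(\chi_n\vgf_\mu)$ by Leibniz, and passing $n\to\infty$. The cutoff error terms $\int_\Omega v(\vgf_\mu\Delta\chi_n+2\nabla\chi_n\cdot\nabla\vgf_\mu)\,dx$ are supported in a shrinking neighborhood of $\partial\Omega$ and are shown to vanish as $n\to\infty$ thanks to $\tr v=0$ in the sense of Definition~\ref{nomtrace} together with the behavior $\vgf_\mu\approx\delta^\alpha$, $|\nabla\vgf_\mu|\lesssim\delta^{\alpha-1}$ near $\partial\Omega$; the remaining terms converge by monotone and dominated convergence.

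\textbf{Conclusion via the gradient estimate.} Once $\|\sigma\|_{\mathfrak{M}(\Omega,\delta^{\alpha})}\le c\|\tau\|_{\mathfrak{M}(\Omega,\delta^{\alpha})}$ is secured, we have $\tau-\sigma\in\mathfrak{M}(\Omega,\delta^{\alpha})$, and Proposition~\ref{PropA}(iv) identifies
\[
v=\mathbb{G}_\mu^\Omega[\tau-\sigma].
\]
Differentiating under the integral yields the pointwise bound
\[
|\nabla v(x)|\le\int_\Omega|\nabla_xG_\mu^\Omega(x,y)|\,d\rho(y),\qquad\rho:=|\tau|+\sigma,
\]
and $\|\rho\|_{\mathfrak{M}(\Omega,\delta^{\alpha})}\le c\|\tau\|_{\mathfrak{M}(\Omega,\delta^{\alpha})}$. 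Proposition~A(i) applied with $\theta=\gamma=\alpha$ (for which $\tfrac{N+\gamma}{N+\theta-1}=q_\mu$ and $\gamma<\tfrac{\alpha N}{N-1}$) then delivers
\[
\|\nabla v\|_{L^{q_\mu}_w(\Omega,\delta^{\alpha})}\le c\,\|\tau\|_{\mathfrak{M}(\Omega,\delta^{\alpha})}.
\]
Finally, for any $1\le q<q_\mu$, the continuous embedding $L^{q_\mu}_w(\Omega,\delta^{\alpha})\hookrightarrow L^q(\Omega,\delta^{\alpha})$ from \eqref{weakin2}, applicable because $\delta^{\alpha}$ is integrable on the bounded domain $\Omega$, upgrades the weak estimate into the required strong one and finishes the proof.
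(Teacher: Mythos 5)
Your proposal follows the same skeleton as the paper's proof (write $-L_\mu v=\tau-\sigma$ with $\sigma\ge 0$, show $\sigma\in\GTM^+(\Gw,\gd^\ga)$ with $\norm{\sigma}_{\GTM(\Gw,\gd^\ga)}\le c\norm{\tau}_{\GTM(\Gw,\gd^\ga)}$, represent $v$ as a Green potential, and conclude via Proposition A with $\gth=\gg=\ga$ plus \eqref{weakin2}), and that last step is identical to the paper's. However, the two steps you leave as sketches are exactly where the content lies, and as written they are gaps. First, the identification $v=\BBG_\mu^\Gw[\tau-\sigma]$ cannot be obtained from Proposition \ref{PropA}(iv): that uniqueness statement applies to weak solutions in the sense of Definition \ref{solLinear}, i.e. functions satisfying \eqref{lweakform} for every $\zeta\in \mathbf{X}_\mu(\Gw)$, whereas you only have a distributional identity against $C_c^\infty(\Gw)$ together with a zero dynamic trace; bridging these is precisely the nontrivial part. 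Second, the vanishing of the cutoff errors $\int_\Gw v\,(\vgf_\mu\Gd\chi_n+2\nabla\chi_n\cdot\nabla\vgf_\mu)\,dx$ does not follow from $\tr(v)=0$ alone: with cutoffs at scale $\gb_n$ these terms are of size $\gb_n^{\ga-2}\int_{\{\gb_n/2<\gd<\gb_n\}}v\,dx$, and to kill them you need a quantitative decay such as $\int_{\Gs_\gb}v\,dS=o(\gb^{1-\ga})$, which requires either a comparison of the $L_\mu$ harmonic measures $\gw^{x_0}_{D_\gb}$ with surface measure on $\Gs_\gb$ (a Poisson-kernel estimate you neither prove nor cite) or a prior pointwise/integral bound on $v$ near $\prt\Gw$; moreover at $\mu=\tfrac14$ the exponents are borderline and your dominated-convergence step lacks a uniform majorant.

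Both gaps are closed at once by the route the paper takes, which leans on the linear theory instead of hand-made cutoffs: since $-L_\mu(v-\BBG_\mu^\Gw[\tau])\le 0$, the function $(v-\BBG_\mu^\Gw[\tau])_+$ is a nonnegative $L_\mu$ subharmonic function with zero trace, hence vanishes by Proposition \ref{PropA}(ii); therefore $\tl v:=\BBG_\mu^\Gw[\tau]-v\ge 0$ is $L_\mu$ superharmonic with zero trace, and Proposition \ref{PropA}(iii) gives $\tl v=\BBG_\mu^\Gw[\tl\tau]$ with $\tl\tau\in\GTM^+(\Gw,\gd^\ga)$ (this $\tl\tau$ is your $\sigma$), whence $v=\BBG_\mu^\Gw[\tau-\tl\tau]$ with no appeal to (iv). The mass bound then follows by Fubini rather than by eigenfunction pairing: testing $-L_\mu\tl v=\tl\tau$ with $\BBG_\mu^\Gw[1]\approx\gd^\ga$ (estimate \eqref{G11}) yields
\begin{equation*}
\norm{\tl\tau}_{\GTM(\Gw,\gd^\ga)}\le c\int_\Gw \BBG_\mu^\Gw[1]\,d\tl\tau=c\int_\Gw \tl v\,dx\le c\int_\Gw \BBG_\mu^\Gw[|\tau|]\,dx\le c'\norm{\tau}_{\GTM(\Gw,\gd^\ga)} .
\end{equation*}
If you wish to keep your eigenfunction computation, you must first establish $v\le \BBG_\mu^\Gw[\tau]$ (or the harmonic-measure versus surface-measure comparison) in order to control the boundary layers; otherwise the argument is incomplete.
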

\begin{proof}
We notice that $(v - \BBG_\mu^\Gw[\gt])_+$ is a nonnegative $L_\mu$ subharmonic function with $\tr((v - \BBG_\mu^\Gw[\gt])_+)=0$. By Propositions \ref{PropA}, $(v - \BBG_\mu^\Gw[\gt])_+ = 0$, i.e. $v \leq \BBG_\mu^\Gw[\gt]$ a.e. in $\Gw$. Put $\tl v=\BBG_\mu^\Gw[\gt] - v$ then $\tl v$ is a nonnegative $L_\mu$ superharmonic function in $\Gw$ and $\tr(\tl v)=0$. Due to Proposition \ref{PropA}, there exists $\tl \gt \in \GTM^+(\Gw,\gd^\ga)$ such that
\bel{eqtlv} -L_\mu \tl v = \tl \gt. \ee
This implies that $\tl v = \BBG_\mu^\Gw[\tl \gt]$ and hence $v=\BBG_\mu^\Gw[\gt -\tl \gt]$. Therefore, by Lemma \ref{gradGK2},
\bel{luka} \| \nabla v \|_{L^q(\Gw,\gd^\ga)} \leq c\| \gt -\tl \gt \|_{\GTM(\Gw,\gd^\ga)} \leq c(\| \gt  \|_{\GTM(\Gw,\gd^\ga)} + \| \tl \gt \|_{\GTM(\Gw,\gd^\ga)}).
\ee
By using $\BBG_\mu^\Gw[1]$ as a test function for \eqref{eqtlv}, keeping in mind the estimate
\bel{G11} c^{-1}\gd^\ga \leq \BBG_\mu^\Gw[1] \leq c \gd^\ga \quad \text{in } \Gw, \ee
we obtain
\bel{lak} \| \tl \gt \|_{\GTM(\Gw,\gd^\ga)} \leq c\int_\Gw \BBG_\mu^\Gw[1] d\tl \gt = c\int_\Gw \tl v dx  \leq c\int_{\Gw}\BBG_\mu^\Gw[\gt]dx \leq c' \| \gt \|_{\GTM(\Gw,\gd^\ga)}.
\ee
Combining \eqref{luka} and \eqref{lak}, we deduce \eqref{b8}.
\end{proof}

We turn to the

\begin{proof}[{\bf Proof of Theorem D}] Since $u_i$ is a solution of \eqref{BP}, $g(|\nabla u_i|) \in L^1(\Gw,\gd^\ga)$, $i=1,2$. Moreover, from Lemma \ref{gradGK2} and Lemma \ref{gradP}, we deduce that
$$ \norm{\nabla u_i}_{L^q(\Gw,\gd^\ga)} \leq c_1(\norm{g(|\nabla u_i|)}_{L^1(\Gw,\gd^\ga)}+ \norm{\nu_i}_{\GTM(\prt \Gw)}).
$$

Without loss of generality we assume that $\xn_2\neq0,$ thus by Lemma \ref{positive}  $u_2(x)>0$ for any $x\in \xO.$ In addition, by Lemma \ref{regularity}, $u_i\in C^2(\xO).$ Finally by the representation formula we have
$$u_i+\BBG_{\xm}^\xO[g(|\nabla u_i|)]=\BBK_{\xm}^\xO[\nu_i],\quad i=1,2.$$

Let $0<\xe\leq1,$ then
\begin{align*}
(\xe u_1-u_2)_+\leq \left(\BBG_{\xm}^\xO[g(|\nabla u_2|)]-\xe\BBG_{\xm}^\xO[g(|\nabla u_1|)]\right)_+\leq \BBG_{\xm}^\xO[\left|g(|\nabla u_2|)-\xe g(|\nabla u_1|)\right|]=:v,
\end{align*}
which implies
$$\tr((\xe u_1-u_2)_+)\leq \tr(v)=0.$$
Due to \eqref{G3},  $\xe u_1$ is a subsolution of \eqref{N}. Also since $u_i\in C^2(\xO)$ and $u_2>0$ in $\xO$, it follows that
$$\sup_{x\in D_\xb}\frac{u_1}{u_2}= C_\xb<\infty,$$
where $\xb>0$ is small enough. Without loss of generality we assume that $C_\xb>1.$ Set $\xe_\xb=\frac{1}{C_\xb}<1$. Then $\vge_\gb u_1 - u_2 \leq 0$ in $D_\gb$.

\noindent \textbf{Claim:} For any $\gb>0$ small enough, there holds
\be
\xe_\xb u_1-u_2<0 \quad \text{in }  D_\xb.\label{megisto}
\ee
Indeed, from \eqref{G3}, we observe that
\begin{align*}
- \Gd (\xe_\xb u_1-u_2)+ g(\xe_\xb|\nabla u_1|) - g(|\nabla u_2|)\leq \frac{\mu}{\gd^2} (\xe_\xb u_1-u_2) \leq0 \quad \text{in } D_\xb.
\end{align*}
Since $g$ is locally Lipschitz, there holds
$$g(\xe_\xb|\nabla u_1|) - g(|\nabla u_2|)={\bf b} \nabla \left(\xe_\xb u_1-u_2\right) \quad\text{a.e. in}\; D_\xb,$$
with the estimate $\sup_{x\in D_\xb}|\textbf{b}(x)|\leq C$.
Hence
\begin{align*}
- \Gd (\xe_\xb u_1-u_2)+ {\bf b}\nabla \left(\xe_\xb u_1-u_2\right)&\leq 0 \quad \text{in }  D_\xb.
\end{align*}
By the maximum principle $\xe_\xb u_1-u_2$ can not achieve a nonnegative maximum in $D_\xb$ and thus the claim follows.

Due to Kato's inequality \cite{MVbook}, we get
\bel{b7} -L_\mu(\xe_\xb u_1-u_{2})_+ \leq (g (|\nabla u_2|) -g (\xe_\xb |\nabla u_1|))\chi_{_{E_{\xb}}}, \ee
where $E_{\xb}=\{x \in \Gw: \xe_\xb u_{1}-u_{2}>0\}$. By \eqref{megisto} we derive that
$E_\xb\subset\xO_\xb$.

Applying Lemma \ref{lem2} and Holder's inequality, thanks to \eqref{G2}, we get
\bel{b9} \BAL \int_{\Gw} |\nabla(\xe_\xb u_{1}-&u_{2})_+ |^q\gd^{\ga} dx \leq c\left(\int_{\Gw} |g (|\nabla u_2|) -g (\xe_\xb |\nabla u_1|)|\chi_{_{E_{\xb}}}\gd^\ga dx\right)^q\\
&\leq c\left(\int_{E_{\xb}} (\xe^{q-1}_\xb|\nabla u_1|^{q-1}+|\nabla u_2|^{q-1})|\nabla(\xe_\xb u_1-u_2)|\gd^{\ga} dx\right)^q\\
&\leq c\left(\int_{E_{\xb}} (\xe^q_\xb|\nabla u_1|^q+ |\nabla u_2|^q)\gd^{\ga} dx\right)^{q-1}\int_{E_{\xb}} |\nabla(\xe_\xb u_1-u_2)|^q\gd^{\ga} dx.
\EAL \ee
Since $E_\xb\subset \xO_\gb$ and $|\nabla u_i| \in L^q(\Gw,\gd^\ga)$, we can choose $\xb_*$ small enough such that
\bel{bbb}
c\left(\int_{E_{\xb_*}} (|\nabla u_1|^q+ |\nabla u_2|^q)\gd^{\ga} dx\right)^{q-1}<\frac{1}{4}.
\ee
By the above inequality and  \eqref{b9} we obtain that
$$\nabla(\xe_{\gb_*} u_1-u_2)_+=0\Rightarrow (\xe_{\gb_*} u_1-u_2)_+=c_*,$$
for some constant $c_*\geq0$ and since $(\xe_{\gb_*} u_1-u_2)_+=0$ on $\overline{D}_{\xb_*}$ we have that $c_*=0$, namely $\vge_{\gb_*}u_1 \leq u_2$ in $\Gw$. As a consequence,
\bel{vuq}\sup_{x\in\xO}\frac{u_1(x)}{u_2(x)}=\sup_{x\in D_{\xb_*}}\frac{u_1(x)}{u_2(x)}=\sup_{x\in \partial D_{\xb_*}}\frac{u_1(x)}{u_2(x)}=\xe_{\xb_*}^{-1}>1.
\ee
This implies the existence of $x_*\in \partial D_{\xb_*}$ such that
\bel{iml} (\xe_{\gb_*} u_1-u_2)(x_*)=0. \ee
Next we take $\gb<\gb_*$, then  $\vge_\gb \leq \vge_{\gb_*}$. On the other hand, we infer from \eqref{vuq} that $\vge_\gb \geq \vge_{\gb_*}$ and hence $\vge_\gb = \vge_{\gb_*}$. Therefore \eqref{iml} contradicts \eqref{megisto}.
\end{proof}
\subsection{Some estimates}

\begin{lemma} \label{apriori} Assume $g(t)=t^q$ with $1<q<\frac{N}{N-1}$. If $u$ is a nonnegative solution of
\bel{Epower} -L_\mu u + |\nabla u|^q = 0 \quad \text{in } \Gw \ee
then
\begin{align}
\label{apriori1} u(x) \leq C \gd(x)^{-\frac{2-q}{q-1}} + M_{\gb_0} \quad \forall x \in \Gw, \\
\label{apriori2} |\nabla u(x)| \leq C'\gd(x)^{-\frac{1}{q-1}} \quad \forall x \in \Gw,
\end{align}
where $M_{\gb_0}:=\sup_{\overline D_{\gb_0}}u$, $C=C(N,\mu,q,\gb_0,M_{\xb_0})$ and $C'=C'(N,\mu,q,\gb_0,M_{\xb_0})$.
\end{lemma}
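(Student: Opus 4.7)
Set $\sigma:=\frac{2-q}{q-1}$; the decisive algebraic identity is $q(\sigma+1)=\sigma+2$, which singles out pure powers of $\delta$ as the natural ansatz. My plan has two main ingredients: for \eqref{apriori1}, a comparison of $u$ with an explicit super-solution; for \eqref{apriori2}, the classical Bernstein technique fed by \eqref{apriori1}.

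For \eqref{apriori1}, I define the barrier $W(x):=A\delta(x)^{-\sigma}+M_{\beta_0}$ on $\Omega_{\beta_0}$, with $A>0$ to be chosen. Exploiting $|\nabla\delta|=1$ and the boundedness of $\Delta\delta$ in $\Omega_{\beta_0}$ (Proposition \ref{geo}), together with the identity above, one computes
\[
   -L_\mu W+|\nabla W|^q=\delta^{-\sigma-2}\bigl[\sigma^q A^q-A\bigl(\sigma(\sigma+1)+\mu\bigr)+\sigma A\,\delta\Delta\delta\bigr]-\mu M_{\beta_0}\delta^{-2}.
\]
Choosing $A=A(N,\mu,q,\beta_0,M_{\beta_0})$ large enough so that the bracket dominates $\mu M_{\beta_0}\beta_0^\sigma$ uniformly in $\Omega_{\beta_0}$ (possible because $\sigma^q A^q$ grows faster in $A$ than the linear term for $q>1$) makes $W$ a classical super-solution of \eqref{Epower}; moreover $\lambda W$ remains a super-solution for every $\lambda\geq 1$ because $\lambda(-L_\mu W)+\lambda^q|\nabla W|^q\geq(\lambda^q-\lambda)|\nabla W|^q\geq 0$. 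I would then apply Lemma \ref{comp1} to $u$ and $\lambda_\varepsilon W$ on the sub-collar $D^\varepsilon:=\{\varepsilon<\delta<\beta_0\}$, with $\lambda_\varepsilon\geq 1$ chosen so that $\lambda_\varepsilon W\geq u$ on $\Sigma_\varepsilon$ (it automatically dominates $u$ on $\Sigma_{\beta_0}$ since $W>M_{\beta_0}\geq u$ there), obtaining $u\leq\lambda_\varepsilon W$ on $D^\varepsilon$. The crucial step is to show $\lambda_\varepsilon$ stays bounded as $\varepsilon\to 0^+$; I envisage a blow-up/scaling argument at a near-maximizer $x_\varepsilon\in\Sigma_\varepsilon$ of $\delta^\sigma u$: the rescaled function $v_\varepsilon(y):=\varepsilon^\sigma u(x_\varepsilon+\varepsilon y/2)$ solves on $B_1(0)$ a uniformly elliptic equation of the same form with bounded Hardy coefficient, and comparison with the rescaled barrier (whose constants are now universal) yields a uniform bound on $v_\varepsilon(0)$, contradicting $v_\varepsilon(0)\to\infty$. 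Letting $\varepsilon\to 0$ then yields $u\leq CW$ in $\Omega_{\beta_0}$, which is \eqref{apriori1}.

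For \eqref{apriori2}, I fix $x_0\in\Omega$, set $d:=\min\{\delta(x_0),\beta_0/2\}$ and $B:=B_{d/2}(x_0)\subset\Omega$, on which $\delta\geq d/2$. By \eqref{apriori1}, $\sup_B u\leq Cd^{-\sigma}$, so the right-hand side $f:=\mu u/\delta^2$ of $-\Delta u+|\nabla u|^q=f$ obeys $\|f\|_{L^\infty(B)}\leq C_1d^{-\sigma-2}$. I then run the Bernstein gradient argument: set $w:=|\nabla u|^2$ and, using $\Delta u=|\nabla u|^q-f$ and $|\mathrm{Hess}(u)|^2\geq(\Delta u)^2/N$, derive an inequality of the form $\Delta w+q|\nabla u|^{q-2}\nabla u\cdot\nabla w\geq c_N|\nabla u|^{2q}-C_2\bigl(f^2+|\nabla f|^{q/(q-1)}\bigr)$; maximizing $\eta^k w$ for a suitable cutoff $\eta\in C_c^\infty(B)$ with $\eta\equiv 1$ on $B_{d/4}(x_0)$ yields $|\nabla u(x_0)|\leq C_3\bigl(d^{-1/(q-1)}+\|f\|_{L^\infty(B)}^{1/q}\bigr)$. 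The identity $(\sigma+2)/q=1/(q-1)$ collapses both terms into $d^{-1/(q-1)}$, giving \eqref{apriori2}. For $x\in\Omega$ with $\delta(x)\geq\beta_0$, both estimates are trivial by absorbing constants into $M_{\beta_0}$.

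The principal obstacle I anticipate is the comparison step for \eqref{apriori1}: a direct application of Lemma \ref{comp1} on the entire $\Omega_{\beta_0}$ fails because $W$ is infinite at $\partial\Omega$ while the lemma itself is meant to furnish the missing growth estimate of $u$ there. Breaking this circularity by a blow-up/contradiction argument to bound the exhaustion parameter $\lambda_\varepsilon$ uniformly in $\varepsilon$ is the delicate point of the proof; the Bernstein computation is standard once the pointwise bound has been secured.
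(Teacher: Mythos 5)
Your barrier computation for $W=A\delta^{-\sigma}+M_{\beta_0}$ is fine, but the proof of \eqref{apriori1} has a genuine gap exactly where you flag it: the uniform bound on $\lambda_\varepsilon$ is never established, and the blow-up argument you sketch does not close it. Bounding $\lambda_\varepsilon$ amounts to bounding $\sup_{\Sigma_\varepsilon}\varepsilon^{\sigma}u$, which is essentially the estimate \eqref{apriori1} itself, and the claimed "uniform bound on $v_\varepsilon(0)$" for the rescaled functions is not available from the tools you invoke: since the absorption acts only on the gradient, the equation admits no local universal sup bound (this is visible in the statement itself, whose constants carry the non-universal anchor $M_{\beta_0}$), and a comparison in the rescaled ball $B_1$ would again require either knowledge of $v_\varepsilon$ on $\partial B_1$ or a barrier infinite on $\partial B_1$ that dominates solutions with arbitrarily large interior values --- i.e.\ you are back to the circularity you set out to break. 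The paper dissolves the difficulty with a small but decisive modification of the barrier: instead of placing the singularity on $\partial\Omega$, one takes, for $\beta\in(0,\beta_0)$,
\begin{equation*}
w_\beta(x)=C\,(\delta(x)-\beta)^{-\frac{2-q}{q-1}}+M_{\beta_0},\qquad x\in D_\beta\setminus D_{\beta_0},
\end{equation*}
which (for $C$ large, by the same computation as yours) is a supersolution in $\{\beta<\delta<\beta_0\}$ and blows up on the \emph{interior} level set $\Sigma_\beta$, where $u$ is finite; on $\Sigma_{\beta_0}$ one has $u\le M_{\beta_0}<w_\beta$. Hence Lemma \ref{comp1} applies directly on $D_\beta\setminus D_{\beta_0}$ with no exhaustion parameter at all, and letting $\beta\to 0$ gives \eqref{apriori1}. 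I recommend you replace your $\lambda_\varepsilon$-scheme by this shifted barrier.

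For \eqref{apriori2} your route (Bernstein on balls $B_{d/2}(x_0)$ with $f=\mu u/\delta^2$ treated as a right-hand side) differs from the paper, which rescales to unit scale, normalizes by $m_0=\max_{B_{2d_0}(x_0)}u$, observes that $m_0^{q-1}d_0^{2-q}$ is bounded thanks to \eqref{apriori1}, and quotes the interior gradient estimate of Ladyzhenskaya--Ural'tseva \cite{La} to get $\max_{B_{d_0}(x_0)}|\nabla u|\le c\,d_0^{-1}\max_{B_{2d_0}(x_0)}u$. Your Bernstein variant could be made to work, but note that in your differential inequality the term $|\nabla f|^{q/(q-1)}$ is not harmless: $\nabla f=\mu\delta^{-2}\nabla u-2\mu u\delta^{-3}\nabla\delta$ contains $\nabla u$, so this contribution must be absorbed into the $|\nabla u|^{2q}$ term (possible since $2q>1+q/(q-1)\cdot 0$ in the large-gradient regime, but it needs to be written out); the scaling-plus-\cite{La} argument avoids this bookkeeping. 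In any case, both versions of \eqref{apriori2} rest on \eqref{apriori1}, so repairing the first part as above is the essential step.
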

\Proof For $\gb \in (0,\gb_0)$, put
$$ w_\gb(x) = C (\gd(x)-\gb)^{-\frac{2-q}{q-1}} + M_{\gb_0},  \quad x \in D_\gb. $$
By a simple computation, we deduce that for $C_1=C_1(N,\mu,\gb_0,q)$ large enough
$$ -L_\mu w_\gb + |\nabla w_\gb|^q \geq 0 \quad \text{in } D_\gb \sms D_{\gb_0}. $$
By Lemma \ref{comp1}, $u \leq w_\gb$ in $D_\gb \sms D_{\gb_0}$. Consequently, by letting $\gb \to 0$, we obtain  \eqref{apriori1}.

Next we prove \eqref{apriori2}. Fix $x_0 \in \Gw$ and set $d_0=\frac{1}{3}\gd(x_0)$, $y_0=\frac{1}{d_0}x_0$ and
$$ \quad m_{0}=\max\{u(x): x \in B_{2d_0}(x_0)\}, \q
v(y)=\frac{u(x)}{m_0},\quad  y=\frac{1}{d_0}x \in B_2(y_0). $$
Then $\max\{v(y): y \in B_2(y_0)\}=1$, $m_0^{q-1}d_0^{2-q}<C(N,\mu,q,\gb_0,M_{\xb_0})<\infty$, and
$$ -\Gd v - \frac{\mu}{\dist(y,\prt(d_0^{-1}\Gw))^2}v +   m_0^{q-1}d_0^{2-q}|\nabla v|^q = 0 \quad \text{in } B_2(y_0).$$
By \cite{La},  there exists a positive constant $c=c(N,\mu,q,\gb_0,M_{\xb_0})$ such that $$\max_{B_1(y_0)}|\nabla v| \leq c.$$
Consequently,
$$ \max_{B_{d_0}(x_0)}|\nabla u| \leq cd_0^{-1}\max_{B_{2d_0}(x_0)}u $$
which implies \eqref{apriori2}. \qed

\section{Isolated boundary singularities} In this section, we assume that $0 \in \prt \Gw$ and study the behavior near $0$ of solutions of \eqref{Epower}  which vanish on $\prt \Gw \setminus \{0\}$.
\subsection{A priori estimates} We first establish pointwise a priori estimates for solutions with isolated singularity at $0$, as well as their gradient.


\begin{proof}[{\bf Proof of Proposition E}] We first prove \eqref{3.4.24}. \medskip
	
\noindent \textbf{Step 1.} Let $\xb_0$ be the constant in Proposition \ref{barr}. Let $x_i\in \partial\xO$ be such that $|x_i|\geq \frac{\xb_0}{16},$ $$\partial\xO\subset B(0,\frac{\xb_0}{4})\cup\bigcup_{i=1}^nB(x_i,\frac{\xb_0}{32})=:A,$$
for some $n\in\mathbb{N}$. Notice that there exists a constant $\xe_0=\xe_0(\xb_0)>0$ such that $\dist(\partial A,\partial\xO)>\xe_0.$

Let $w_i$ be the function constructed in Proposition \ref{barr} in $B(x_i,\frac{\xb_0}{16})$ for $R=\frac{\xb_0}{16},\;i=1,...,n.$
Then by the maximum principle (see \cite[Proposition 2.13 and 2.14]{GkV}), we have that
$$u(x)\leq w_i(x),\quad\forall x\in B(x_i,\frac{\xb_0}{16}),\;i=1,...,n. $$
As a consequence, there is a positive constant $C_0=C_0(N,\mu,q,\xO,\xb_0)$ such that $$u(x)\leq C_0 \quad\forall x\in \cup_{i=1}^nB(x_i,\frac{\xb_0}{32}).$$

Set $$v(x):=C_1\left(|x|-\frac{\xb_0}{4}\right)^{-\frac{2-q}{q-1}} \quad \text{with }
C_1=3C_0(\sup_{x\in\xO}|x|)^{\frac{2-q}{q-1}}.$$

We will show that $v(x)\geq u(x)$ for every  $x\in \xO\setminus A$. Indeed, by a direct computation,
\bel{lem1} \BAL
-\xD v &=C_1\frac{2-q}{q-1}\left((N-1)|x|\left(|x|-\frac{\xb_0}{4}\right)^{-1}+(q-1)^{-1}\left(|x|-\frac{\xb_0}{4}\right)^{-2}\right)v\\
&\geq C_1\frac{2-q}{(q-1)^2}\left(|x|-\frac{\xb_0}{4}\right)^{-\frac{q}{q-1}}\quad\forall x\in\xO\setminus\overline{A},
\EAL \ee
\bel{lem22}
|\nabla v|^q=C_1^q\left(\frac{2-q}{q-1}\right)^q\left(|x|-\frac{\xb_0}{4}\right)^{-\frac{q}{q-1}} \quad\forall x\in\xO\setminus\overline{A},
\ee
and
\bel{lem3} \xm\frac{v(x)}{\gd(x)^2}\leq C_1\xe_0^{-2}(\sup_{x\in\xO}|x|)^{-2}\left(|x|-\frac{\xb_0}{4}\right)^{-\frac{q}{q-1}} \quad\forall x\in\xO\setminus\overline{A}.
\ee
Gathering estimates \eqref{lem1}--\eqref{lem3} leads to, for $C_1=C_1(N,\mu,q,\xb_0,\Gw)>0$ large enough,
$$\BAL
-L_\xm v+|\nabla v|^q &\geq \left(|x|-\frac{\xb_0}{4}\right)^{-\frac{q}{q-1}} \left[-C_1\frac{2-q}{(q-1)^2}-C_1 \vge_0^{-2}(\sup_{x\in\xO}|x|)^{-2}+C_1^q\left(\frac{q-2}{q-1}\right)^q\right]\\
&\geq0 \quad\forall x\in\xO\setminus\overline{A}.
\EAL $$
Moreover, we have
$$ \limsup_{x \to \prt (\Gw \setminus \overline A)}(u-v) < 0. $$
By Lemma \ref{comp1} we deduce that $u \leq v$ in $\xO\setminus\overline{A},$ which implies that
$$u(x)\leq C(N,\mu,q,\Gw,\gb_0) \quad\forall x\in D_{\gb_0}.$$
Thus by Lemma \ref{apriori} there exists $C_2=C_2(\Gw,N,\mu,q,\gb_0)>0$ such that
\be
u(x)\leq C_2\gd(x)^{-\frac{2-q}{q-1}} \quad\forall x\in \xO.\label{3.4.1}
\ee \smallskip

\noindent \textbf{Step 2.}  For $\ell>0$, put
\bel{scal} T_\ell[u](x):=\ell^{\frac{2-q}{q-1}}u(\ell x), \quad x \in \Gw^\ell:=\ell^{-1}\Gw. \ee
Let $\xi\in\partial\xO\setminus \{0\}$
and put $d=d(\xi):= \frac{1}{2} |\xi|$. We assume that $d \leq 1$. Denote $u_d: =T_{d}[u]$ then $u_d$ is a solution of \eqref{Epower} in $\Gw^{d}=\frac{1}{d}\Gw$.
Let $R_0=\frac{\xb_0}{16},$ where $\xb_0$ is the constant in Proposition \ref{barr}.

Then the solution $w_{\xi,\frac{3R_0}{4}}$ mentioned
in Proposition \ref{barr} satisfies
$$
u_d(y) \leq w_{\xi,\frac{3R_0}{4}}(y)\quad \forall y\in B_{\frac{3R_0}{4}}(\xi)\cap\xO^{d} .$$
Thus $u_d$ is bounded above in $B_{\frac{3R_0}{5}}(\xi)\cap\xO^{d}$ by a constant $C>0$ depending only on $N, q, \xm $
and the $C^2$ characteristic of $\xO^{d}$. As $d \leq 1$ a $C^2$ characteristic of $\xO$
is also a $C^2$ characteristic of $\xO^{d}$, therefore the constant $C$ can be taken to be independent of $\xi$. We note here that the constant $R_0 \in (0,1)$ depends on $C^2$ characteristic of $\xO$.

Now put
$$W_d(y):=\left\{ \BAL &\dist(y,\prt \Gw^{d})^{1-\ga} \qquad&& \text{if } \xk <\frac{1}{4}, \\
&\dist(y,\prt\Gw^{d})^{\frac{1}{2}} |\ln \dist(y,\prt\Gw^{d})| &&\text{if } \xk =\frac{1}{4}.
\EAL \right.
$$
Then we infer from \eqref{lama} that
$$\lim_{y\in\xO^{d},\;y\rightarrow P}\frac{u_d(y)}{W_d(y)}=0\qquad\forall P\in  B_{\frac{3R_0}{5}}(\xi) \cap \partial\xO^{d}.$$
Since $u_d$ is a positive $L_\xm$ subharmonic in $\Gw^{d}$, in view of the proof of \cite[Propositions 2.11 and 2.12]{GkV}, we deduce that
$$\frac{u}{\dist(\cdot,\prt \Gw^{d})^\xa}\in H_0^1(\xO^d\cap B_{\frac{11R_0}{20}}(\xi),\dist(\cdot,\prt \Gw^{d})^{2\xa}).$$
By proceeding as in the proof of \cite[Theorem 2.12 ]{FMT}, we deduce that there exists  $C=C(N,\mu,q)>0$ such that
\bel{sim}
u_d(y) \leq (\mathrm{dist}(y,\partial\xO^{d})^\xa\quad \forall y\in B_{\frac{R_0}{2}}(\xi)\cap\xO^{d}.\ee
Hence
\bel{sun1}
u(x)\leq \gd(x)^\xa d^{-\frac{2-q}{q-1}-\xa}\quad\forall x\in B_{d\frac{R_0}{2}}(\xi)\cap\xO.
\ee

Let $x\in \xO_{\frac{R_0}{2}}$ and $\xi$ be the unique point in $\partial\xO\setminus \{0\}$ such that $|x-\xi|=\gd(x)$. Put $d=\frac{1}{2}|\xi|$.

\textit{Case 1:} $|x|<\frac{1}{1+R_0}$. If $\gd(x)\leq \frac{R_0|x|}{16}$ then
\bel{sun2} \BAL &2d \leq \gd(x)+|x|\leq (1+  R_0/16)|x|<1, \\
&2d \geq |x| - \gd(x) \geq  (1- R_0/16)|x|.
\EAL \ee
As a consequence,
\bel{sun3} |x-\xi| =\gd(x) \leq \frac{R_0}{16}|x| \leq \frac{R_0}{8}\left( 1 - \frac{R_0}{4} \right)^{-1} d < \frac{R_0}{2}d.
\ee
In the last inequality we have used the fact that $R_0<1$. By combining \eqref{sun1}--\eqref{sun3}, we obtain
\bel{sun4} u(x) \leq C\gd(x)^{\xa}\left((1-R_0/4)|x|\right)^{-\frac{2-q}{q-1}-\xa}.
\ee
If   $\gd(x)> \frac{R_0|x|}{16}$ then by \eqref{3.4.1} we have
\bel{sun5} u(x) \leq C\gd(x)^{-\frac{2-q}{q-1}} \leq C\gd(x)^{\xa}\left(\frac{R_0}{4}|x|\right)^{-\frac{2-q}{q-1}-\xa}. \ee
From \eqref{sun4} and \eqref{sun5} we deduce that \eqref{3.4.24} holds for every $x\in\xO_{\frac{R_0}{2}}$ such that $|x| < \frac{1}{1 + R_0}$.

\textit{Case 2:} $|x|\geq \frac{1}{1 + R_0}$.  By an argument similar to the one used to obtain  \eqref{sim}, we can prove that
$$
u(x)\leq C \gd(x)^\ga \leq  C' \gd(x)^\ga |x|^{-\frac{2-q}{q-1}-\xa}\qquad\forall x\in B_{\frac{R_0}{2}}(\xi)\cap\xO.
$$

If $x\in \xO\setminus\xO_{\frac{R_0}{2}}$ then  \eqref{3.4.24} follows directly from \eqref{3.4.1}.

Next we prove \eqref{3.4.24*}. Let $x_0\in\xO$ such that $\gd(x_0)<\min(\frac{\xb_0}{36},1)$. By proceeding as in the proof of \eqref{apriori2}, we can deduce
$$ \max_{y\in B_{\frac{1}{3}\gd(x_0)}(x_0)}|\nabla u| \leq c\gd(x_0)^{-1}\max_{y\in B_{\frac{2}{3}\gd(x_0)}(x_0)}u. $$
This and \eqref{3.4.24} imply  \eqref{3.4.24*}.

If $\gd(x_0)\geq\min(\frac{\xb_0}{36},1)=:r_0,$ by \cite{La} and \eqref{3.4.24},  there exists a positive constant $c=c(N,\mu,q,\gb_0)$ such that $$\max_{y\in B_{\frac{r_0}{2}}(x_0)}|\nabla u| \leq c$$
and the result follows.
\end{proof}
\subsection{Weak singularities}

\begin{proof}[{\bf Proof of Theorem F}]  Let $u=u_{0,k}^\Gw$ be the solution of \eqref{Pc}. By Theorem B and Lemma \ref{positive}, $0<u \leq kK_\mu^\Gw(\cdot,0)$ in $\Gw$. Moreover,
\bel{formul} u + \BBG_\mu^\Gw[|\nabla u|^q] = k K_\mu^\Gw(\cdot,0).
\ee
By proceeding as in the proof of \eqref{apriori2}, we obtain
\bel{gary} |\nabla u(x)| \leq ck\gd(x)^{\ga-1} |x|^{2-N-2\ga} \quad \forall x\in \Gw. \ee
This follows that
\be\label{ole1}
 \BBG_\mu^\Gw[|\nabla u|^q](x) \leq ck^q  \int_{\Gw} \gd(y)^{(\ga-1)q}G_\mu^\Gw(x,y)|y|^{(2-N-2\ga)q}dy.
\ee

\noindent \textbf{Case 1: $\ga + (\ga-1)q \geq 0.$}

 By the assumption and \eqref{Gmu}, we have
\bel{gary2} \BBG_\mu^\Gw[|\nabla u|^q](x) \leq ck^q \gd(x)^\ga \int_{\Gw} |x-y|^{2-N-2\ga}|y|^{\ga-(N+\ga-1)q}dy.
\ee
Since $q<q_\mu$, it follows that
\bel{AB7} \int_{\Gw} |x-y|^{2-N-2\ga}|y|^{\ga - (N+1+\ga)q}dy \leq c |x|^{2-\ga-(N-1+\ga)q}.
\ee
Combining \eqref{gary2}, \eqref{AB7} and \eqref{Kmu} yields
\bel{gary4} \BBG_\mu^\Gw[|\nabla u|^q](x) \leq ck^q  |x|^{N+\ga-(N+\ga-1)q}K_\mu^\Gw(x,0). \ee
As a consequence,
\bel{AB4} \lim_{|x| \to 0}\frac{\BBG_\mu^{\Gw}[|\nabla u|^q](x)}{K_\mu^\Gw(x,0)}=0. \ee

\noindent \textbf{Case 2: $-1+\xa<\ga + (\ga-1)q <0.$}

Note that $q_\xm<\frac{1}{1-\xa}.$ By \eqref{ole1} and \eqref{Gmu} we have
\be\label{ole2}
 \BBG_\mu^\Gw[|\nabla u|^q](x) \leq ck^q  \int_{\Gw} \gd(y)^{(\ga-1)q}F_\xm(x,y)|y|^{(2-N-2\ga)q}dy,
\ee
where
$$F_\xm(x,y):=\abs{x-y}^{2-N}\min\left\{1,\gd(x)^{\ga  }\gd(y)^{\ga  }\abs{x-y}^{-2\ga  }\right\} \; \forall x,y \in \Gw,\, x \neq y.$$

Let $\xb \in (0,\gb_0)$ such that $\xd\in C^2(\overline{\xO_\xb}).$  We consider the cut-off function $\xf\in C^\infty(\overline{\xO_\frac{\xb}{2}}),$ such that $0\leq\xf\leq1,$ $\xf=1$ in  $\xO_\frac{\xb}{4}$ and $\xf=0$ in $\xO\setminus\overline{\xO_\frac{\xb}{2}}.$
Then
\bel{le1} \BAL
\int_\xO \xd(y)^{(\ga-1)q   }F_\xm(x,y)|y|^{-q(N+2\ga   -2)}&dy=\int_\xO \xd(y)^{(\ga-1)q  }F_\xm(x,y)|y|^{-q(N+2\ga   -2)}\xf(y)dy\\
&+\int_\xO \xd(y)^{(\ga-1)q   }F_\xm(x,y)|y|^{-q(N+2\ga   -2)}(1-\xf(y))dy.
\EAL \ee
By the definition of $\xf$ and using the inequality
\be
F_\xm(x,y)\leq\gd(x)^{\ga  }\abs{x-y}^{2-N-\ga},\label{32}
\ee
we obtain
\bel{le2} \BAL
\int_\xO \xd(y)^{(\ga-1)q   }&F_\xm(x,y)|y|^{-q(N+2\ga   -2)}(1-\xf(y))dy\\
&=\int_{\xO\setminus\xO_\frac{\xb}{4}} \xd^{(\ga-1)q   }(y)F_\xm(x,y)|y|^{-q(N+2\ga   -2)}(1-\xf(y))dy\\
&\leq C(\xb,N,\mu,  q)\xd(x)^{\ga   }.
\EAL \ee
Let $\widetilde{\xb} \in (0,\frac{\xb}{4})$ be such that $|x-y|>r_0>0$ for any $y\in \xO_{\widetilde{\xb}}$. Let $\xe>0$ be such that
$$q(N+\ga   -1)=N+\ga -\xe$$
and
$0<\tilde{\xe}<\xe$ be such that $(\ga-1)q   +1-\tilde{\xe}>0$.
Then by \eqref{32}, we have
\begin{align*}
\int_{\Gs_{\widetilde{\xb}}}\xd(y)^{(\ga-1)q   +1}&F_\xm(x,y)|y|^{-q(N+2\ga   -2)}dS(y)\\
&\leq\xd(x)^\ga  r_0^{2-N-2\ga   } \int_{\Gs_{\widetilde{\xb}}} \xd(y)^{\tilde{\xe}}|y|^{-q(N+\ga   -1)+\ga+1-\tilde{\xe}}dS(y)\\
&=\xd(x)^\ga  r_0^{2-N-2\ga   } \int_{\Gs_{\widetilde{\xb}}} \xd(y)^{\tilde{\xe}}|y|^{-N+1+ (\xe-\tilde{\xe})}dS(y).
\end{align*}
Note that by the choice of $\tilde{\xe}$, $N-2-N+1+ (\xe-\tilde{\xe})>-1$, which implies
$$\sup_{\widetilde{\xb}\in (0,\frac{\xb}{4})}\int_{\Gs_{\widetilde{\xb}}}|y|^{-N+1+(\xe-\tilde{\xe})}dS(y)<C.$$
Combining the above estimates, we deduce
\be
\lim_{\widetilde{\xb}\rightarrow0}\int_{\Gs_{\widetilde{\xb}}}\xd(y)^{(\ga-1)q   +1}F_\xm(x,y)|y|^{-q(N+2\ga   -2)}dS(y)=0. \label{in13}
\ee

Now note that
$$ \BAL
&-\int_{\xO_{\xb}} \nabla  \xd(y)\nabla F_\xm(x,y)\xd(y)^{(\ga-1)q   +1}|y|^{-q(N+2\ga   -2)}\xf(y)dy\\ \nonumber
&=(N-2) \int_{\xO_{\xb}} \frac{\nabla  \xd(y)\cdot(x-y)}{|x-y|^{N}}\min\left\{1,\frac{\gd(x)^{\ga  }\gd(y)^{\ga  }}{\abs{x-y}^{2\ga  }}\right\}\xd(y)^{(\ga-1)q   +1}|y|^{-q(N+2\ga   -2)}\xf(y)dy\\
&-\int_{\xO_{\xb}}
\nabla\xd(y)\nabla\left(\min\left\{1,\frac{\gd(x)^{\ga  }\gd(y)^{\ga  }}{\abs{x-y}^{2\ga  }}\right\}\right)\xd(y)^{(\ga-1)q   +1}|x-y|^{2-N}|y|^{-q(N+2\ga   -2)}\xf(y)dy.
\EAL $$
On the other hand
$$ \BAL
-\nabla  \xd(y)\nabla_y&\left(\min\left\{1,\gd(x)^{\ga  }\gd(y)^{\ga  }\abs{x-y}^{-2\ga  }\right\}\right)\\
&\leq 2\ga   |x-y|^{-1}\min\left\{1,\gd(x)^{\ga  }\gd(y)^{\ga  }\abs{x-y}^{-2\ga  }\right\} \text{ a.e. in } \xO.
\EAL $$
By collecting the above estimates, we obtain
\bel{in14} \BAL
-\int_{\xO_{\xb}} \nabla  \xd(y)\nabla F_\xm(x,y)&\xd(y)^{(\xa-1)q   +1}|y|^{-q(N+2\ga   -2)}\xf(y)dy\\
&\leq C\xd(x)^{\ga   }\int_\xO|x-y|^{-(N+\ga   -1)}|y|^{-q(N+\ga   -1)+1}dy.
\EAL \ee
It follows from  integration by parts, \eqref{in13} and \eqref{in14} that
\bel{le3} \BAL
&\int_{\xO_{\xb}} \xd(y)^{(\xa-1)q   }F_\xm(x,y)|y|^{-q(N+2\ga   -2)}\xf(y)dy\\
&=\frac{1}{(\xa-1)q   +1}\int_{\xO_{\xb}} \nabla(\xd(y)^{(\xa-1)q   +1})\nabla  \xd(y)F_\xm(x,y)|y|^{-q(N+2\ga   -2)}\xf(y)dy\\
&\leq C\xd(x)^{\ga   }\int_\xO|x-y|^{-(N+2\ga   -2)}|y|^{-q(N+\ga   -1)+\ga }dy\\
&+ C\xd(x)^{\ga   }\int_\xO|x-y|^{-(N+\ga   -1)}|y|^{-q(N+\ga   -1)+1}dy\\
&=:M(x)+N(x).
\EAL \ee

We will estimate $M(x)$ and $N(x)$ successively. By putting $e_x=|x|^{-1}x$ and $\eta=|x|^{-1}y$, we obtain
$$M(x)
\leq c\gd(x)^\ga |x|^{2-\ga -q(N+\ga   -1)}\int_{\BBR^N}|e_x-\eta|^{-(N+2\ga  -2)}|\eta|^{-q(N+\ga  -1)+\ga}d\eta.
$$
The last integral is finite and independent of $x$ since $q<q_\xm,$ $0<\ga   <1$ and $N\geq3$. This and \eqref{Kmu} imply that
\bel{le4} M(x) \leq C|x|^{N+\ga -q(N+\ga   -1)}K_{\xm}^\Gw(x,0).
\ee
Similarly we have
\bel{le5}\BAL N(x)
&\leq c|x|^{2-\ga-q(N+\ga   -1)}\int_{\BBR^N}|e_x-\eta|^{-(N+\ga  -1)}|\eta|^{-q(N+\ga  -1)+1}d\eta \\
&\leq C |x|^{N+\ga-q(N+\ga   -1)} K_{\xm}^\Gw(x,0).
\EAL \ee
Combining \eqref{ole1}, \eqref{le1}, \eqref{le2}, \eqref{le3} -- \eqref{le5}  implies that there exists a positive constant $C=C(\xO,N,\mu,q)>0$ such that
\begin{align}\label{1212}
\BBG_\mu^\Gw[|\nabla u|^q](x) &\leq Ck^q|x|^{N+\ga  -q(N+\ga   -1)} K_{\xm}^\Gw(x,0)\quad\forall x\in\xO.
\end{align}
By the above inequality and \eqref{formul} we can easily prove \eqref{BA.1}.

By combining \eqref{AB4} and \eqref{formul}, we obtain \eqref{BA.1}. The monotonicity comes from  Theorem B.
\end{proof}
\subsection{Strong singularities} We recall that $\CL_\mu$ is defined in \eqref{CLmu}. Notice that the eigenvalue $\gk_\mu$ is explicitly determined as follows
 \begin{equation}\label{Eq1-4}
 \gk_{\xm}=\ga(N+\ga-2)
 \end{equation}
and the corresponding eigenfunction
$\xf_\xm(\gs)=(\frac{x_N}{|x|}\lfloor_{S^{N-1}_+})^{\ga}=({\bf e}_{_N} \cdot \gs)^{\ga}$ solves
\begin{equation}\label{Eq7} \left\{ \BAL
-\CL_\mu \xf_\xm &=\gk_{\xm }\xf_\xm \quad&&\text{in }S^{N-1}_+
\\
\xf_\xm&=0 &&\text{on }\prt S^{N-1}_+.
\EAL \right. \end{equation}

Notice that equation $(\ref{Eq7})$ admits a unique positive solution with supremum $1$ and if $\xm=0$ then $\xa=1,$ which means that $\xf_0(\gs)={\bf e}_{_N} \cdot \gs$ is the first eigenfunction of $-\Gd'$ in $H^1_0(S^{N-1}_+)$. Moreover,
\begin{equation}\label{Eq4}
\gk_{\xm }=\displaystyle\inf\left\{
\frac{\int_{S^{N-1}_+}\left(|\nabla' w|^2-\xm ({\bf e}_{_N} \cdot \gs)^{-2}w^2\right)dS}{\int_{S^{N-1}_+}w^2dS}
:w\in H^1_0(S^{N-1}_+),w\neq 0\right\}.
\end{equation}
By \cite[Theorem 6.1]{dAdP} the infimum exists since $\xf_0(\xs)={\bf e}_{_N}\cdot \gs$ is the first eigenfunction of $-\Gd'$ in $H^1_0(S^{N-1}_+)$. The minimizer $\gf_\xm$ belongs to $H^1_0(S^{N-1}_+)$ only if $1<\mu<\frac{1}{4}$. Furthermore $\gf_{\gm}\in {\bf Y}_\mu(S^{N-1}_+)$ where  ${\bf Y}_\mu(S^{N-1}_+)$ is defined in \eqref{Y}. 

Finally by \eqref{Eq7} the following expression holds
\be
|\nabla'\xf_0(\xs)|^2=1-\xf_0(\xs)^2 \quad\forall \xs \in S^{N-1}_+.\label{gradientf}
\ee
\noindent Indeed, since $\xf_{\frac{1}{4}}=\xf_0^{\frac{1}{2}}$ we have

$$-\xD'\xf_0^{\frac{1}{2}}=\frac{1}{4}\xf_0^{-\frac{3}{2}}|\nabla'\xf_0|^2+\frac{N-1}{2}\xf_0^\frac{1}{2}$$
and
$$-\xD'\xf_0^{\frac{1}{2}}=\frac{1}{4}\xf_0^{-\frac{3}{2}}+\gk_{\frac{1}{4}}\xf_0^\frac{1}{2}.$$

\noindent Taking into account that  $\gk_{\frac{1}{4}}-\frac{N-1}{2}=-\frac{1}{4},$ by the above equalities we obtain \eqref{gradientf}.

\begin{proof}[{\bf Proof of Theorem G}]
\textbf{Step 1.} \emph{Existence.} Set $$\xg_1:=\left(\frac{\ell_{N,q}-\gk_\xm}{\xa^q}\right)^{\frac{1}{q-1}},$$ then the function $\overline{\xo}:=\xg_1\xf_\xm$ is a supersolution of \eqref{P4}.
Indeed by \eqref{Eq7} and \eqref{gradientf},
\begin{align*}
-\CL_\mu \overline{\gw} - \ell_{N,q}\overline{\gw} &+J(\overline \gw, \nabla' \overline \gw)
=\xg_1(\gk_\xm- \ell_{N,q})\xf_\xm+ \xg_1^q\left( \Big(\frac{2-q}{q-1}\Big)^2 \xf_\xm^2 + \abs{\nabla' \xf_\xm}^2\right)^{\frac{q}{2}} \\
&=\xg_1(\gk_\xm- \ell_{N,q})\xf_0^\xa+ \xg_1^q\left( \Big(\frac{2-q}{q-1}\Big)^2 \xf_0^{2\xa} +\xa^2\xf_0^{2(\xa-1)} \abs{\nabla' \xf_0}^2\right)^{\frac{q}{2}}\\
&=\xg_1(\gk_\xm- \ell_{N,q})\xf_0^\xa+ \xg_1^q\left( \left(\Big(\frac{2-q}{q-1}\Big)^2-\xa^2\right) \xf_0^{2\xa} +\xa^2\xf_0^{2(\xa-1)}\right)^{\frac{q}{2}}\\
&\geq \xg_1(\gk_\xm- \ell_{N,q})\xf_0^\xa+\xa^q\xg_1^q\xf_0^{q(\xa-1)}\\
&\geq [\xg_1(\gk_\xm- \ell_{N,q})+\xa^q\xg_1^q]\gf_0^\ga=0
\end{align*}

Let $\xa_0\in (\xa,1)$ be such that
$$q<\frac{N+\xa_0}{N+\xa_0-1}<q_\mu.$$

We note that $\xf_{\xm_0}=\xf_0^{\xa_0},$ where  $$\xm_0=\frac{1}{4}-(\xa_0-\frac{1}{2})^2<\mu.$$

We allege that there exists a positive constant $\xg_2=\xg_2(N,q,\xm,\xm_0)\leq \xg_1$ such that the function $\underline{\xo}=\xg_2\xf_{\xm_0}$ is a subsolution of \eqref{P4}.
Indeed by \eqref{Eq7} and \eqref{gradientf} we have
\begin{align*}
&-\CL_\mu \underline{\xo} - \ell_{N,q}\underline{\xo} + J(\underline \gw, \nabla' \underline \gw)\\
&=\xg_2(\xm_0-\xm)\frac{\xf_{\xm_0}}{({\bf e}_{_N} \cdot \gs)^2}+\xg_2(\gk_{\xm_0}-\ell_{N,q})\xf_{\xm_0}+\xg_2^q\left( \Big(\frac{2-q}{q-1}\Big)^2 \xf_{\xm_0}^2 + \abs{\nabla' \xf_{\xm_0}}^2\right)^{\frac{q}{2}}\\
&=\xg_2(\xm_0-\xm)\xf_{0}^{\xa_0-2}+\xg_2(\gk_{\xm_0}-\ell_{N,q})\xf_{0}^{\xa_0}+\xg_2^q\left( \left(\Big(\frac{2-q}{q-1}\Big)^2-\xa_0^2\right) \xf_0^{2\xa_0} +\xa_0^2\xf_0^{2(\xa_0-1)}\right)^{\frac{q}{2}}\\
&\leq\left(\xg_2(\xm_0-\xm)+\xg_2^q\xa_0^q\right)\xf_{0}^{\xa_0-2}+\left(\xg_2(\gk_{\xm_0}-\ell_{N,q})
+\xg_2^q\left(\Big(\frac{2-q}{q-1}\Big)^2-\xa_0^2\right)^{\frac{q}{2}}\right)\xf_{0}^{\xa_0}\leq0,
\end{align*}
provided $\xg_2$ is small enough. Notice that we can choose $\xg_2\leq \xg_1$.

For $t\in (0,1)$, set $S_t:=\{\gs \in S_+^{N-1}: \xf_0(\gs)<t\}$ and $\widetilde S_t:=S_+^{N-1} \setminus S_t$. In view of the proof of \cite[Theorem 6.5]{Ka}, there exists a solution $\xo_t \in W^{2,p}(\widetilde{S}_t)$ to \eqref{P4} such that
\be
\underline{\xo}(\xs)\leq \xo_t(\xs)\leq \overline{\xo}(\xs) \quad\forall\xs\in\;\widetilde{S}_t .\label{supest}
\ee
Therefore, by the standard elliptic theory, there exist a function $\widetilde w$ and a sequence $t_n\searrow0$  such that  $\xo_{t_n}\rightarrow\widetilde{\xo}$ locally uniformly in $C^1(S^{N-1}_+)$ and $\widetilde{\xo}$ satisfies
 $$-\CL_\mu \widetilde{\gw} - \ell_{N,q}\widetilde{\gw} + J(\widetilde \gw,\nabla' \widetilde \gw) = 0 \quad\text{in } S_+^{N-1}.$$
Furthermore by \eqref{supest} we have that
\be
\underline{\xo}(\xs)\leq \widetilde{\xo}(\xs)\leq \overline{\xo}(\xs) \quad\forall\xs\in\; S_+^{N-1}.\label{supest1}
\ee

Set $\tilde{u}(x)=|x|^{-\frac{2-q}{q-1}}\widetilde{\xo}(\gs)$ then $\tilde{u}$ satisfies
$$- L_\mu \tilde{u} +|\nabla \tilde{u}|^q = 0 \quad \text{in } \mathbb{R}^N_+,$$
and
$$|\tilde{u}(x)|\leq \left(\frac{\ell_{N,q}-\gk_\xm}{\xa^q}\right)^{\frac{1}{q-1}} x_N^\xa|x|^{-\frac{2-q}{q-1}+\xa} \quad\forall x\in \mathbb{R}^N_+.$$

Let $x_0=(x_0',0)$ be such that $|x_0'|=1$ then in view of the proof of \eqref{apriori2}, there exists a constant $C_1=C(N,\mu,q)$ such that
\be
|\nabla\tilde{u}(x)|\leq C_1x_N^{\xa-1} \quad\forall x\in B(x_0,\frac{1}{2}).\label{kk}
\ee
This implies
\be
|\nabla'\tilde{\xo}(\xs)|\leq C \xf_0(\gs)^{\xa-1} \quad\forall \xs\in S^{N-1}_+.\label{grest}
\ee

\noindent \textbf{Step 2.} \emph{Uniqueness.}

Let $\xo_i\in {\bf Y}_\mu(S^{N-1}_+)$, $i=1,2,$ be two positive solutions of \eqref{P4}.
Let $x_0=(x_0',0)$ be such that $|x_0'|=1,$ Then $u_i(x)=|x|^{-\frac{2-q}{q-1}}\xo_i\in H^1(B(x_0,\frac{2}{3}),x_N^{2\xa})$ and satisfies
$$- L_\mu u_i +|\nabla u_i|^q = 0 \quad \text{in } \mathbb{R}^N_+,$$
which implies $- L_\mu u_i \leq 0$  in  $\mathbb{R}^N_+$. Since $0<v_i:=x_N^{-\ga}u_i \in H^1(B(x_0,\frac{1}{2}),\;x_N^{2\xa})$ and satisfies
$$-\div(x_N^{2\ga} \nabla v)\leq 0 \quad \text{in } \mathbb{R}^N_+,$$
by \cite[Theorem 2.12]{FMT}, there exists a positive constant $C_i>0$ such that
$$
u_i(x) \leq C_i x_N^{\xa} \quad\forall x\in B(x_0,\frac{1}{2}).
$$
Therefore in view of the proof of \eqref{grest}, we deduce that there exists a positive constant $C_0$ such that
\be
 w_i(\xs)\leq C_0 \xf_0(\xs)^\ga \quad\forall \xs\in S^{N-1}_+,\;\;i=1,2\label{boundsf}
\ee
and
\be
 |\nabla'w_i(\xs)|\leq C_0 \xf_0(\gs)^{\xa-1} \quad\forall \xs\in S^{N-1}_+,\;\;i=1,2\label{boundsgrandf}.
\ee

Set
$$b_t:=\inf_{c>1}\{c: c\xo_1\geq\xo_2,\xs\in \widetilde S_t\}<\infty.$$
Without loss of generality we may assume that $b_{t_0}>1$ for some $t_0\in(0,1)$; thus by \eqref{boundsf} we have
$$1<b_{t_0}\leq b_t \quad\forall t \in (0,t_0).$$

In the sequel we consider $t \in (0,t_0)$. Put $\psi:=\xf_0^\xa-\frac{1}{2}\xf_0^{\xa+\xe},$ where $\xe\in (0,1-\xa)$ is a parameter that will be determined later. Then we have
$\frac{1}{2}\xf_0^\xa\leq\psi\leq \xf_0^\xa.$ We recall that $\xf_0^\xa=\xf_\xm$ and $\xf_0^{\xa+\xe}=\xf_{\xm_\xe}$ where
$$\xm_\xe:=\frac{1}{4}-(\xa+\xe-\frac{1}{2})^2.$$
From the definition of $\psi$, it is easy to check that
\bel{psi}
-\CL_\mu \psi =\frac{\xm-\xm_\xe}{2}\xf_0^{\xa+\xe-2}+\xf_0^{\xa}\left(\gk_\xm-\frac{\gk_{\xm_\xe}}{2}\xf_0^{\xe}\right).
\ee

Now, let $\xo_t=b_t^{-1}\xo_2.$ We remark that $\xo_t$  is a subsolution of \eqref{P4} and $\xo_t-\xo_{1}\leq 0$ in $\widetilde S_t$. Also, we have
\bel{dior1}
-\CL_\mu (\xo_t-\xo_1) \leq -J(\gw_t,\nabla' \gw_t) + J(\gw_1,\nabla' \gw_1)
+\ell_{N,q}(\xo_t -\xo_{1}).
\ee

Since $1<q<2$, the following inequality holds for any nonnegative number $h_1,h_2,k_1,k_2$
 \bel{alge}
 -(h_1^2+h_2^2)^\frac{q}{2}+(k_1^2+k_2^2)^\frac{q}{2}
 \leq \left(h_1^{q-1}+h_2^{q-1}+k_1^{q-1}+k_2^{q-1}\right)\left(|h_1-k_1|+|h_2-k_2|\right).
 \ee
By applying \eqref{alge} with $h_1=\Big(\frac{2-q}{q-1}\Big) \gw_\xd$, $h_2=\abs{\nabla' \gw_\xd}$, $k_1=\Big(\frac{2-q}{q-1}\Big)\gw_1$ and $k_2=\abs{\nabla' \gw_1}$ and keeping in mind estimates \eqref{boundsf} and \eqref{boundsgrandf}, we obtain
 \bel{dior}
 -J(\gw_t, \nabla' \gw_t) + J(\gw_1,\nabla' \gw_1)\\
 \leq C(q,C_0)\xf_0^{(q-1)(\xa-1)}\left(|\xo_t-\xo_1|+|\nabla'(\xo_t-\xo_1)|\right).
 \ee

 Now set $V_t:=\psi^{-1}(\xo_t-\xo_1)$.  By \eqref{dior1}, \eqref{dior} and the definition of $\psi,$ we can easily deduce the existence of a positive constant $C=C(N,\mu,q,C_0)$ such that
\bel{trem}
-\text{div}'(\psi^2\nabla'V_t) + \psi V_t(-\CL_\mu\psi)\leq C\left(\xf_0^{q(\xa-1)+\xa}|V_t|+\xf_0^{(q-1)(\xa-1)+2\xa}|\nabla'V_t|\right).
\ee
Now since $\psi V_t \in{\bf Y}_\mu(S^{N-1}_+)$ and $V_t(\xs)\leq0$ for any $\xs \in \widetilde S_t,$ multiplying the above inequality by $(V_t)_+$ and integrating over $S_+^{N-1}$, we get
\bel{ineq1} \BAL
\int_{S_t} &|\nabla'(V_t)_+|^2\psi^2 dS(\gs)+\int_{S_t} \psi (V_t)_+^2(-\CL_\mu\psi) dS(\gs)=\psi (-\CL_\mu (\gw_t-\gw_1)) \\
&\leq C\left( \int_{S_t} \xf_0^{q(\xa-1)+\xa}(V_t)_+^{2} dS(\gs) +\int_{S_t}\xf_0^{(q-1)(\xa-1)+2\xa}|\nabla'(V_t)_+|(V_t)_+ dS(\gs)\right).
\EAL \ee
By the definition of $\psi$  and \eqref{psi}, we have
\bel{ineq2}\BAL
\int_{S_t} |\nabla'(V_t)_+|^2&\psi^2 dS(\gs)+ \int_{S_t} \psi (V_t)_+^2(-\CL_\mu\psi) dS(\gs)\\
&\geq \frac{1}{4}\int_{S_t} |\nabla'(V_t)_+|^2\xf_0^{2\xa} dS(\gs)
+\frac{\xm-\xm_\xe}{4}\int_{S_t}  (V_t)_+^2\xf_0^{2\xa+\xe-2}dS(\gs)\\
&-\frac{N-1}{2}\int_{S_t}  (V_t)_+^2\xf_0^{2\xa}dS(\gs).
\EAL \ee

\noindent Note here that if $\xe<1-\xa$ then $q<2<\frac{2-\xa-\xe}{1-\xa}$. This leads to
\bel{gamma}
2-\ga-\vge-q(1-\ga)>0 \quad \text{and} \quad  4-2\ga-\vge-2q(1-\ga)>0.
\ee

By Young's inequality, we deduce that
\bel{ineq4}\BAL
C\int_{S_t}&\xf_0^{(q-1)(\xa-1)+2\xa}|\nabla'(V_t)_+|(V_t)_+ dS(\gs)\\
&\leq \frac{1}{8}\int_{S_t}\xf_0^{2\xa}|\nabla'(V_t)_+|^2 dS(\gs)+ \hat C\int_{S_t}\xf_0^{2(q-1)(\xa-1)+2\xa}(V_t)_+^2 dS(\gs)
\EAL \ee
where $C$ is the constant in \eqref{ineq1} and $\hat C=\hat C(q,N,\mu)$.

Gathering \eqref{ineq1}, \eqref{ineq2} and \eqref{ineq4} yields
$$\BAL
\frac{1}{8}\int_{S_t}&\xf_0^{2\xa}|\nabla'(V_t)_+|^2dS(\gs)\leq-\frac{\xm-\xm_\xe}{4}\int_{S_t} \xf_0^{2\xa+\xe-2} (V_t)_+^2dS(\gs) \\
&\;\;\quad+C_1\int_{S_t}\left(\xf_0^{q(\xa-1)+\xa} + \xf_0^{2(q-1)(\xa-1)+2\xa}\right) (V_t)_+^2dS(\gs)\\ \nonumber
&\leq \int_{S_t}\xf_0^{2\xa+\xe-2}\left(-\frac{\xm-\xm_\xe}{4}+C_1\left(t^{2-\ga-\vge-q(1-\ga)}  + t^{4-2\ga-\vge-2q(1-\ga)} +t^{2-\xe}\right) \right)(V_t)_+^2dS(\gs),
\EAL $$
where $C_1=C(q,N,\xm)$. By \eqref{gamma} and the above inequality we can find a positive constant $t_1=t_1(N,q,\mu,\xe,C_0)$ such that
$$\frac{1}{8}\int_{S_{t_1}}\xf_0^{2\xa}|\nabla'(V_{t_1})_+|^2dS(\gs)\leq 0,$$
which implies $(V_{t_1})_+=0$ in $S_{t_1}$ since $(V_{t_1})_+=0$ on $\{\gs \in S_+^{N-1}: \xf_0(\gs)=t_1\}.$ Hence
\be
b_{t_1}^{-1}\xo_2\leq \xo_1 \quad \forall \xs \in S_{t_1}.
\ee

Thus we have proved that $$b_{t_1}=\inf_{c>1}\{c: c\xo_1\geq\xo_2,\xs\in \widetilde S_{t_1}\}=\inf_{c>1}\{c: c\xo_1\geq\xo_2,\xs\in S^{N-1}_+\}.$$
This means that $\gw_1 - \xo_{t_1}(\xs)\geq0$ for any $\xs\in S^{N-1}_+$ and
\be
\xo_1(\xs_0)-\gw_{t_1}(\xs_0)=0,\quad\text{for some}\;\xs_0\in \widetilde{S}_{t_1}.\label{contr}
\ee
But
\begin{align*}
-\CL_\mu (\xo_1-\gw_{t_1}) &- \ell_{N,q}(\xo_1-\gw_{t_1}) + J(\gw_1,\nabla' \gw_1) - J(\gw_{t_1},\nabla' \gw_{t_1}) \geq 0,
\end{align*}
which implies
\begin{align*}
-\xD'(\xo_{1}-\gw_{t_1}) +H(\xo_{1},\nabla' \gw_{1})-H(\xo_{t_1},\nabla' \gw_{t_1}) \geq 0.
\end{align*}
By the above inequality and mean value theorem there exists $\bar \Gl>0$ such that
$$-\xD'(\xo_{1}-\gw_{t_1}) +\frac{\partial J(\overline{s},\overline{\xi})}{\partial\xi}(\nabla' \gw_1-\nabla' \gw_{t_1})+\bar \Gl(\xo_{1}-\gw_{t_1}) \geq 0 \quad \text{in } \widetilde S_{\frac{t_1}{2}}, $$
where $\overline{s}$ and $\overline{\xi}$ are  functions with respect to $\xs\in \widetilde S_{\frac{t_1}{2}} $ such that $\frac{\partial H(\overline{s},\overline{\xi})}{\partial\xi}\in L^\infty(\widetilde S_{\frac{t_1}{2}}).$ By the maximum principle, $\xo_{1}-\gw_{t_1}$
cannot achieve a non-positive minimum in $\widetilde S_{\frac{t_1}{2}} \setminus \partial \widetilde S_{\frac{t_1}{2}}$ which clearly contradicts \eqref{contr}.

The result follows by exchanging the role of $\xo_1,\;\xo_2.$
\end{proof}

By Theorem F and Proposition E, the sequence $\{u_{0,k}^\Gw\}$ is increasing and bounded from above, hence there exists $u_{0,\infty}^\Gw=\lim_{k \to \infty}u_{0,k}^\Gw$.

\begin{proof}[{\bf Proof of Theorem H}]
	
By Proposition E, for every $k>0$,
\bel{ukright} u_{0,k}^\Gw(x) \leq c \gd(x)^\ga |x|^{-\frac{2-q}{q-1}-\ga} \quad \forall x \in \Gw, \ee
and
\bel{gradinf11}
|\nabla u_{0,k}^\Gw(x)| \leq c\gd(x)^{\ga-1}|x|^{-\frac{2-q}{q-1}-\ga} \quad \forall x \in \Gw,
\ee
where $c=c(N,\mu,q,\Gw)$.
Moreover $\{u_{0,k}^\Gw\}$ is increasing. Therefore, by the standard regularity result,  $u_{0,k}^\Gw \to u_{0,\infty}^\Gw$ in $C_{loc}^1(\Gw)$ and $u_{0,\infty}^\Gw$ is a positive solution of \eqref{Epower}. Letting $k \to \infty$ in \eqref{ukright} and \eqref{gradinf11} yields the second estimate in \eqref{uinf2side} and estimate \eqref{gradinf}.

Next we infer from \eqref{formul}, \eqref{1212} and \eqref{Kmu} that, for every $k>0$,
$$ \BAL u_{0,k}^\Gw(x) &= k K_\mu^\Gw(x,0) - \BBG_\mu^\Gw[|\nabla u_{0,k}^\Gw|^q](x) \\
&\geq k K_\mu^\Gw(x,0) - c_1 k^q |x|^{N+\ga - (N+\ga-1)q}K_\mu^\Gw(x,0) \\
&\geq c_2k\gd(x)^\ga |x|^{2-N-2\ga}(1- c_3k^{q-1} |x|^{N+\ga - (N+\ga-1)q}  ).
\EAL $$
For $x \in \Gw$,  choose $k=a|x|^{-\frac{N+\ga-(N+\ga-1)q}{q-1}}$, where $a>0$ will be made precise later on, then
$$ u_{0,k}^\Gw(x) \geq c_2 a \, \gd(x)^\xa |x|^{-\frac{2-q}{q-1}-\ga}(1-c_3a^{q-1}).
$$
By choosing $a=(2c_3)^{-\frac{1}{q-1}}$, we deduce for any $x \in \Gw$ there exists $k>0$ depending on $|x|$ such that
$$ u_{0,k}^\Gw(x) \geq c_4 \gd(x)^\xa|x|^{-\frac{2-q}{q-1}-\ga}. $$
Since $u_{0,\infty}^\Gw \geq u_{0,k}^\Gw$ in $\Gw$ we obtain the first inequality in \eqref{uinf2side}.

Next we prove \eqref{uniq7}. Since $u_{0,k}^\Gw$ is the solution of \eqref{Pc}, it follows that, for any $\ell>0$,  $T_\ell[u_{0,k}^\Gw]$ is a solution of
	\bel{Pkl} \left\{ \BAL -L_\mu u + \abs{\nabla u}^q  &= 0 \quad \text{in } \Gw^\ell \\
	\tr(u) &= k \ell^{m_q} \gd_0.
	\EAL \right. \ee
where $m_q=\frac{2-q}{q-1} - N+1$ and $T_\ell$ is given in \eqref{scal}. By the uniqueness,
\bel{ui1} T_\ell[u_{0,k}^\Gw]=u_{0, k \ell^{m_q}}^{\Gw^\ell} \quad \text{ in } \Gw^\ell. \ee
Sending $k \to \infty$ implies  $T_\ell[u_{0,\infty}^\Gw]=u_{0,\infty}^{\Gw^\ell}$ in $\Gw^\ell$.

 Since estimates \eqref{uinf2side} and \eqref{gradinf11} are invariant under the transformation $T_\ell$, it follows that
\bel{uq1}
c^{-1}\dist(x,\prt \Gw^\ell)^\ga |x|^{-\frac{2-q}{q-1}-\ga} \leq u_{0,\infty}^{\Gw^\ell}(x) \leq c\, \dist(x,\prt \Gw^\ell)^\ga |x|^{-\frac{2-q}{q-1}-\ga} \quad \forall x \in \Gw^\ell,
\ee
\bel{graduq2}
|\nabla u_{0,\infty}^{\Gw^\ell}(x)| \leq c\,\dist(x,\prt \Gw^\ell)^{\ga-1}|x|^{-\frac{2-q}{q-1}-\ga} \quad \forall x \in \Gw^\ell.
\ee
By local regularity results, we deduce that there exist a function $U$ and a subsequence $\{ \ell_n \}$ such that $u_{0,\infty}^{\Gw^{\ell_n}} \to U$, as $\ell_n \to 0$, in $C_{loc}^1(\BBR_+^N)$. Furthermore, $U$ is positive solution of \eqref{P3}.

From \eqref{ui1}, for any $\ell'>0$, we have
$$ T_{\ell'}[u_{0,k\ell_n^{m_q}}^{\Gw^{\ell_n}}]= u_{0,k(\ell_n \ell')^{m_q}}^{\Gw^{\ell_n \ell'} } \quad \text{in } \Gw^{\ell_n \ell'}.$$
By letting $k \to \infty$ and $\ell_n \to 0$, we obtain that $T_{\ell'}[U]=U$ in $\BBR_+^N$ for every $\ell'$. By putting $\gw(\gs)= r^{\frac{2-q}{q-1}}U(x)$ with $r=|x|$, $\gs=r^{-1} x$, we deduce that $\gw$ is a positive solution of \eqref{P4}. By Theorem G, $\gw$ is the unique solution of \eqref{P4}. Thus \eqref{uniq7} follows.
\end{proof}
\section{Supercritical Case}
We start the section with an observation that when $g(t)=t^q$ the condition \eqref{G1} is fulfilled if and only if $q<q_\xm$; in which case the solvability of \eqref{N1} holds for every $\mu \in \GTM^+(\prt \Gw)$.  On the contrary, in the {\it supercritical case} i.e. if $q\geq q_\xm$, a continuity condition with respect to some Besov capacity is needed to derive an existence result.

We recall below some notations concerning Besov space (see, e.g., \cite{Ad, Stein}). For $\gs>0$, $1\leq p<\infty$, we denote by $W^{\gs,p}(\BBR^d)$ the Sobolev space over $\BBR^d$. If $\gs$ is not an integer the Besov space $B^{\gs,p}(\BBR^d)$ coincides with $W^{\gs,p}(\BBR^d)$. When $\gs$ is an integer we denote $\Gd_{x,y}f:=f(x+y)+f(x-y)-2f(x)$ and
$$B^{1,p}(\BBR^d):=\left\{f\in L^p(\BBR^d): \frac{\Gd_{x,y}f}{|y|^{1+\frac{d}{p}}}\in L^p(\BBR^d\times \BBR^d)\right\},
$$
with norm
$$\|f\|_{B^{1,p}}:=\left(\|f\|^p_{L^p}+\int\int_{\BBR^d\times \BBR^d}\frac{|\Gd_{x,y}f|^p}{|y|^{p+d}}dxdy\right)^{\frac{1}{p}}.
$$
Then
$$B^{m,p}(\BBR^d):=\left\{f\in W^{m-1,p}(\BBR^d): D_x^\ga f\in B^{1,p}(\BBR^d)\;\forall\ga\in \BBN^d\,|\ga|=m-1\right\},
$$
with norm
$$\|f\|_{B^{m,p}}:=\left(\|f\|^p_{W^{m-1,p}}+\sum_{|\ga|=m-1}\int\int_{\BBR^d\times \BBR^d}\frac{|D_x^\ga\Gd_{x,y}f|^p}{|y|^{p+d}}dxdy\right)^{\frac{1}{p}}.
$$
These spaces are fundamental because they are stable under the real interpolation method  developed by Lions and Petree.
For $\ga\in\BBR$ we defined the Bessel kernel of order $\ga$ by $G_\ga(\xi)=\CF^{-1}(1+|.|^2)^{-\frac{\ga}{2}}\CF(\xi)$, where $\CF$ is the Fourier transform of moderate distributions in $\BBR^d$. The Bessel space $L_{\ga,p}(\BBR^d)$ is defined by
$$L_{\ga,p}(\BBR^d):=\{f=G_\ga\ast g:g\in L^{p}(\BBR^d)\},
$$
with norm
$$\|f\|_{L_{\ga,p}}:=\|g\|_{L^p}=\|G_{-\ga}\ast f\|_{L^p}.
$$
It is known that if $1<p<\infty$ and $\ga>0$, $L_{\ga,p}(\BBR^d)=W^{\ga,p}(\BBR^d)$ if $\ga\in\BBN$ and $L_{\ga,p}(\BBR^d)=B^{\ga,p}(\BBR^d)$ if $\ga\notin\BBN$, always with equivalent norms. The Bessel capacity is defined for compact subsets
$K\subset\BBR^d$ by
$$C^{\BBR^d}_{\ga,p}(K):=\inf\{\|f\|^p_{L_{\ga,p}}, f\in\CS'(\BBR^d),\,f\geq \chi_K\}.
$$
It is extended to open sets and then Borel sets by the fact that it is an outer measure.

Let us recall the following result in \cite{MaNg}.
\begin{proposition} \label{fragma}
Let $\xn\in\mathfrak M^+(\prt\Gw)$ and $\xb_0$ be the constant in Proposition \eqref{geo}. Then the following inequalities hold
\begin{align}\label{fragma1}
\sup_{0<\xb\leq\xb_0}\xb^{\xa-1}\int_{\Gs_\xb}\BBK_\mu^\xO[\nu]dS&\leq C(\xb_0,\xa,\xO)||\nu||_{\mathfrak M(\prt\Gw)},\quad\text{if}\;\xm<\frac{1}{4},\\ \label{fragma2}
\sup_{0<\xb\leq\xb_0}\left(\xb|\log\xb|^2\right)^{-\frac{1}{2}}\int_{\Gs_\xb}\BBK_\mu^\xO[\nu]dS&\leq C(\xb_0,\xa,\xO)||\nu||_{\mathfrak M(\prt\Gw)},\quad\text{if}\;\xm=\frac{1}{4}.
\end{align}
\end{proposition}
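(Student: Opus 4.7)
The plan is to reduce both estimates to a single pointwise bound on a kernel integral and then analyze two cases. By Fubini and the upper estimate in \eqref{Kmu}, using $\gd\equiv \gb$ on $\Gs_\gb$, one obtains
\[
\int_{\Gs_\gb}\BBK_\xm^\xO[\xn]\,dS \le C\gb^\xa\int_{\prt\xO}I(\gb,y)\,d\xn(y), \qquad I(\gb,y):=\int_{\Gs_\gb}|x-y|^{2-N-2\xa}\,dS(x).
\]
It will suffice to establish that $I(\gb,y)\le C\gb^{1-2\xa}$ uniformly in $y\in\prt\xO$ when $\xm<\tfrac14$ and $I(\gb,y)\le C|\log\gb|$ uniformly when $\xm=\tfrac14$; multiplying by $\gb^\xa$ and integrating against $\xn$ will then yield respectively $C\gb^{1-\xa}\|\xn\|_{\GTM(\prt\xO)}$ and $C\gb^{1/2}|\log\gb|\,\|\xn\|_{\GTM(\prt\xO)}$, which are exactly the two asserted bounds.

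The first step is to parametrize $\Gs_\gb$ using Proposition \ref{geo}. For $\gb\le\gb_0$ every $x\in\Gs_\gb$ can be written uniquely as $x=\xi-\gb\bfn_{\xi}$ with $\xi=\xi_x\in\prt\xO$, and the induced Jacobian of this parametrization is bounded above and below by constants depending only on the $C^2$ character of $\prt\xO$. Expanding
\[
|x-y|^2=|\xi-y|^2-2\gb\,\bfn_{\xi}\cdot(\xi-y)+\gb^2,
\]
and using the $C^2$ regularity of $\prt\xO$ to bound $|\bfn_{\xi}\cdot(\xi-y)|\le C|\xi-y|^2$ for all $y\in\prt\xO$, I would deduce the key equivalence $|x-y|^2\approx\gb^2+|\xi-y|^2$ uniformly on $\prt\xO\times\prt\xO$, provided $\gb_0$ is taken small enough to absorb the cross term near the diagonal; away from the diagonal both sides are comparable to $|\xi-y|^2$ and the equivalence is immediate.

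With this geometric reduction, $I(\gb,y)$ is comparable to $\int_{\prt\xO}(\gb^2+|\xi-y|^2)^{(2-N-2\xa)/2}\,dS(\xi)$. Passing to normal coordinates on $\prt\xO$ at $y$ and then to polar coordinates with $r=|\xi-y|$ in $\BBR^{N-1}$ gives, for some $R>0$ depending on $\xO$,
\[
I(\gb,y)\le C\int_0^R(\gb^2+r^2)^{\frac{2-N-2\xa}{2}}r^{N-2}\,dr+C,
\]
the trailing constant absorbing the contribution of $|\xi-y|\ge c_0$. The substitution $r=\gb s$ converts the main term into $\gb^{1-2\xa}\int_0^{R/\gb}(1+s^2)^{(2-N-2\xa)/2}s^{N-2}\,ds$; since the integrand decays like $s^{-2\xa}$ at infinity, the integral converges when $\xa>\tfrac12$ (i.e.\ $\xm<\tfrac14$), producing $I(\gb,y)\le C\gb^{1-2\xa}$, whereas at $\xa=\tfrac12$ (i.e.\ $\xm=\tfrac14$) the exponent is $-1$ and the integral grows like $|\log\gb|$.

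The main technical obstacle is the uniformity of the comparison $|x-y|^2\approx\gb^2+|\xi-y|^2$, which hinges on a careful local-chart argument using the $C^2$ structure from Proposition \ref{geo} together with compactness of $\prt\xO$; the remainder is an elementary rescaling computation. Once the equivalence is in place, the two inequalities in the proposition follow at once by integrating against $d\xn$ and using $\xn(\prt\xO)=\|\xn\|_{\GTM(\prt\xO)}$.
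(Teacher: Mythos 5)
Your argument is correct and supplies a complete, self-contained proof of both inequalities; by contrast the paper simply cites \cite[Corollary 2.10]{MaNg} for \eqref{fragma1} and states that \eqref{fragma2} follows by a similar argument, without giving details. Your route --- apply the Martin kernel bound \eqref{Kmu}, use Tonelli, parametrize $\Gs_\gb$ by the nearest-point projection from Proposition \ref{geo}, establish the uniform comparison $|x-y|^2\approx\gb^2+|\xi_x-y|^2$ via the $C^2$ estimate $|\bfn_\xi\cdot(\xi-y)|\le C|\xi-y|^2$, and then rescale $r=\gb s$ --- is the natural direct computation and almost certainly parallels what is behind the cited corollary. What your presentation buys is the unification of the two cases $\mu<\tfrac14$ and $\mu=\tfrac14$ in a single calculation: the only divergence is whether $\int_0^{R/\gb}(1+s^2)^{(2-N-2\ga)/2}s^{N-2}\,ds$ converges (tail $s^{-2\ga}$ with $\ga>\tfrac12$) or grows like $|\log\gb|$ (tail $s^{-1}$ at $\ga=\tfrac12$), which immediately produces the factor $\gb^{1-\ga}$ in the first case and $\gb^{1/2}|\log\gb|$ in the second, matching \eqref{fragma1} and \eqref{fragma2} exactly after multiplying by the prefactor $\gb^\ga$ from $\gd\equiv\gb$ on $\Gs_\gb$. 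The one point you flag as a ``technical obstacle'' --- the uniformity of the two-sided comparison --- is indeed routine: take $\gb_0$ small enough that $2C\gb_0<\tfrac12$ near the diagonal, and note that off the diagonal both $|x-y|$ and $(\gb^2+|\xi-y|^2)^{1/2}$ are comparable to $|\xi-y|$.
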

\begin{proof} Estimates \eqref{fragma1} follows from \cite[Corollary 2.10]{MaNg}. Estimate \eqref{fragma2} can 
be obtained by a similar argument with some modifications and we omit it.
\end{proof}

\begin{lemma}
Let $\xn\in\mathfrak M^+(\prt\Gw),$ $q\in(1,2)$  and $u\in C^2(\xO)$ be a nonnegative solution of \eqref{N1}.

(i) If $q\neq\xa+1$ then there exists a constant $\xb_1=\xb_1(N,\mu,q,\xO)>0$ such that the following inequality holds
\be\label{aniso1}
\int_{\xO}\gd^{\ga-q}u^q dx\leq C\left(\int_{\xO}\xd^\xa|\nabla u|^qdx+1\right),
\ee
where $C$ depends only on  $N$, $\mu$, $q$, $\Gw$ and $\sup_{ \Gs_{\xb_1}}(\BBK_\mu^\xO[\nu])^q.$

(ii) If $q=\xa+1$ then for any $\vge>0$ small enough there exists a constant $\xb_1=\xb_1(N,\mu,\xO,\vge)>0$ such that the following inequality holds
\be\label{aniso1b}
\int_{\xO}\gd^{\vge-1} u^{\xa+1} dx\leq C\left(\int_{\xO}\xd^\xa|\nabla u|^{\xa+1}dx+1\right),
\ee
where $C$ depends only on $N$, $\mu$, $\Gw$, $\vge$ and $\sup_{\Gs_{\xb_1}}(\BBK_\mu^\xO[\nu])^{\xa+1}.$
\end{lemma}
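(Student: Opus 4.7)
The strategy is a weighted Hardy-type estimate for $u$ obtained through integration by parts along the vector field $\nabla\delta$, with the pointwise bound $u\le\mathbb{K}_\mu^\Omega[\nu]$ (from the representation $u=\mathbb{K}_\mu^\Omega[\nu]-\mathbb{G}_\mu^\Omega[|\nabla u|^q]$) used to absorb the boundary contributions. Fix $\beta_1\in(0,\beta_0/2)$, to be chosen small, and let $\eta=\eta_{\beta_1}$ be smooth with $\eta\equiv 1$ on $\Omega_{\beta_1/2}$, $\eta\equiv 0$ on $D_{\beta_1}$, so $|\nabla\eta|+|\Delta\eta|$ is supported in the fixed shell $\Omega_{\beta_1}\setminus\Omega_{\beta_1/2}$. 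I split
\[
\int_\Omega\delta^{\alpha-q}u^q\,dx=I+\int_\Omega\delta^{\alpha-q}u^q(1-\eta)\,dx,\qquad I:=\int_\Omega\delta^{\alpha-q}u^q\eta\,dx.
\]
On the support of $1-\eta$ one has $\delta\ge\beta_1/2$, so $\delta^{\alpha-q}$ is bounded (recall $\alpha<1<q$), and Harnack's inequality for the $L_\mu$-harmonic function $\mathbb{K}_\mu^\Omega[\nu]$ on the compact set $\overline{D_{\beta_1/2}}$ bounds $u$ by $C\sup_{\Sigma_{\beta_1}}\mathbb{K}_\mu^\Omega[\nu]$; thus this integral contributes a constant absorbed into the additive term of \eqref{aniso1}.

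\textbf{Integration by parts.} In $\Omega_{\beta_0}$, where $|\nabla\delta|=1$, the identity $(\alpha-q+1)\delta^{\alpha-q}=\nabla\delta\cdot\nabla(\delta^{\alpha-q+1})$ is meaningful since $q\neq\alpha+1$. Multiplying by $u^q\eta$ and integrating by parts on the collar $\Omega_{\beta_1}\setminus\Omega_\varepsilon$ gives
\[
(\alpha-q+1)I_\varepsilon=-q\!\int\delta^{\alpha-q+1}u^{q-1}(\nabla u\cdot\nabla\delta)\eta\,dx-\!\int\delta^{\alpha-q+1}u^q(\eta\Delta\delta+\nabla\eta\cdot\nabla\delta)\,dx-\varepsilon^{\alpha-q+1}\!\!\int_{\Sigma_\varepsilon}\!u^q\,dS,
\]
where $I_\varepsilon$ is $I$ restricted to $\{\delta>\varepsilon\}$. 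The $\nabla\eta$-term is bounded by a constant controlled by $\sup_{\Sigma_{\beta_1}}(\mathbb{K}_\mu^\Omega[\nu])^q$ since its support sits in the fixed shell where $u$ is controlled by Harnack. The $\Delta\delta$-term obeys $\bigl|\int\delta^{\alpha-q+1}u^q\eta\Delta\delta\,dx\bigr|\le C\beta_1\,I_\varepsilon$, because $\delta^{\alpha-q+1}\le\beta_1\delta^{\alpha-q}$ on $\operatorname{supp}\eta$ and $|\Delta\delta|$ is bounded on $\Omega_{\beta_0}$. For the gradient term, the H\"older split
\[
\delta^{\alpha-q+1}u^{q-1}|\nabla u|=\bigl(\delta^{(q-1)(\alpha-q)/q}u^{q-1}\bigr)\bigl(\delta^{\alpha/q}|\nabla u|\bigr)
\]
with conjugate exponents $q'=q/(q-1),q$ yields $\le I_\varepsilon^{(q-1)/q}\bigl(\int\delta^\alpha|\nabla u|^q\,dx\bigr)^{1/q}$, which Young's inequality converts into $\epsilon I_\varepsilon+C_\epsilon\int\delta^\alpha|\nabla u|^q\,dx$. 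Choosing $\beta_1$ small and then $\epsilon$ small absorbs both $I_\varepsilon$-contributions into $|\alpha-q+1|I_\varepsilon$ on the left.

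\textbf{Boundary term and critical case.} The main obstacle is the passage $\varepsilon\to 0$ in the inner surface integral $\varepsilon^{\alpha-q+1}\int_{\Sigma_\varepsilon}u^q\,dS$. Writing $\int_{\Omega_{\beta_1}}\delta^{\alpha-q}u^q\,dx=\int_0^{\beta_1}t^{\alpha-q}\bigl(\int_{\Sigma_t}u^q\,dS\bigr)\,dt$, once this quantity is known to be finite the function $t\mapsto t^{\alpha-q+1}\int_{\Sigma_t}u^q\,dS$ is integrable on $(0,\beta_1)$ and hence admits a sequence $\varepsilon_n\to 0$ along which it vanishes. The required a priori finiteness is obtained by running the whole scheme first on the truncations $u_T=\min(u,T)$, for which every integral is plainly finite, and then letting $T\to\infty$ via monotone convergence; this delivers \eqref{aniso1}. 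In the critical case (ii), $q=\alpha+1$ forces $\alpha-q+1=0$ and the identity degenerates, so one replaces $\delta^{\alpha-q}=\delta^{-1}$ by $\delta^{\varepsilon-1}$ for a small $\varepsilon\in\bigl(0,\min\{\alpha+1,(N-1)\alpha/(\alpha+1)-(1-\alpha)\}\bigr)$ and uses $\varepsilon\delta^{\varepsilon-1}=\nabla\delta\cdot\nabla\delta^\varepsilon$. The scheme then repeats: the gradient term is handled via $\delta^\varepsilon u^\alpha|\nabla u|\le\delta^{\alpha\varepsilon/(\alpha+1)}u^\alpha|\nabla u|$ (valid on $\{\delta<1\}$), followed by H\"older with conjugate exponents $(\alpha+1)/\alpha$ and $\alpha+1$, producing the pair $\int\delta^{\varepsilon-1}u^{\alpha+1}\,dx$ and $\int\delta^\alpha|\nabla u|^{\alpha+1}\,dx$, from which \eqref{aniso1b} follows after the same absorption-and-truncation procedure.
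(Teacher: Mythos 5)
Your overall scheme --- integration by parts along $\nabla\gd$, H\"older/Young absorption of the gradient term, and control of the outer region and of the $\nabla\eta$-shell via $u\le\BBK_\mu^\Gw[\nu]$ plus Harnack --- is essentially the paper's, and it does go through in case (ii) and in case (i) when $q<\ga+1$: there the inner surface term $-\vge^{\ga-q+1}\int_{\Gs_\vge}u^q\,dS$ enters with a favourable sign (resp.\ with exponent $\vge>0$) and can simply be dropped, and no a priori finiteness is needed because the absorption is performed on $\{\gd>\vge\}$ and one lets $\vge\to0$ by monotone convergence. The genuine gap is in case (i) for $\ga+1<q<2$. There $\ga-q+1<0$, so the surface term appears on the right-hand side with a positive, blowing-up weight and must be shown to vanish along some sequence $\vge_n\to0$; your device for this does not work. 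First, integrability of $t\mapsto t^{\ga-q+1}\int_{\Gs_t}u^q\,dS$ on $(0,\gb_1)$ does not by itself produce a vanishing sequence (a positive constant is integrable); what does is finiteness of $\int_0^{\gb_1}t^{\ga-q}\bigl(\int_{\Gs_t}u^q\,dS\bigr)dt$, i.e.\ of the very quantity you are trying to bound --- this is the circularity you try to break by truncation. Second, the truncation $u_T=\min(u,T)$ does not make that quantity ``plainly finite'' in the relevant regime: when $q>\ga+1$ one has $\ga-q<-1$, so $\gd^{\ga-q}\notin L^1(\Gw_{\gb_1})$ and boundedness of $u_T$ gives nothing (no uniform boundary decay of $u$ is available near $\supp\nu$), while the crude bound on the surface term for fixed $T$ is $T^q\vge^{\ga-q+1}\to\infty$. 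So precisely in the case where the surface term is dangerous, your a priori finiteness claim is unjustified and the argument does not close.

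The paper avoids this by never working at the endpoint exponent: it integrates by parts with $\gd^{\gg+1}$ for $\gg$ strictly larger than $\ga-q$ (subject to \eqref{notzero}), on $D_\gb\setminus D_{\gb_1}$ where all integrals are trivially finite, and shows that the surface term obeys $\gb^{\gg+1}\int_{\Gs_\gb}u^q\,dS\lesssim\gb^{\gg+q-\ga}\to0$ by combining the pointwise bound \eqref{apriori1} (which gives $u^q\lesssim\gd^{q-2}u$ near $\prt\Gw$) with Proposition \ref{fragma} (which gives $\int_{\Gs_\gb}\BBK_\mu^\Gw[\nu]\,dS\lesssim\gb^{1-\ga}$); only afterwards does it let $\gg\downarrow\ga-q$, using the bound that is uniform in $\gg$ and dominated convergence, to reach \eqref{aniso1}. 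Note that at the endpoint $\gg=\ga-q$ these estimates give only boundedness, not vanishing, of the surface term --- which is exactly why the off-endpoint exponent (or some equivalent quantitative surface estimate) is indispensable and cannot be replaced by truncating $u$.
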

\begin{proof}
Since $u$ is a nonnegative solution of \eqref{N1} we have that $|\nabla u|\in L^q(\xO,\xd^\xa)$. Let $\xb_1 \in (0,\xb_0)$ where $\xb_0$ is the constant in Proposition \ref{geo}.

(i) First we assume that $q \neq \xa+1$ and let $\gg \neq -1$.
Then for $\gb \in (0,\gb_1)$,
\begin{align*}
\int_{D_\xb\setminus D_{\xb_1}}\gd^\gg  u^q dx&=(\gg+1)^{-1}\int_{D_\xb\setminus D_{\xb_1}}\nabla \xd^{\gg+1}\nabla \xd \, u^qdx\\
&=(\gg+1)^{-1}\left(-\int_{D_\xb\setminus D_{\xb_1}} \xd^{\gg+1} \xD\xd u^qdx-q\int_{D_\xb\setminus D_{\xb_1}} \xd^{\gg+1}u^{q-1} \nabla\xd\nabla udx\right.\\
&\left.\;\;\;+\int_{\Gs_{\xb_1}} \xd^{\gg+1} \frac{\partial\xd}{\partial \mathbf{n}_{\xb_1}} u^qdx+\int_{\Gs_{\xb}} \xd^{\gg+1} \frac{\partial\xd}{\partial \mathbf{n}_{\xb}} u^qdx\right)\\
&\leq C|\gg+1|^{-1}\left(\int_{D_\xb\setminus D_{\xb_1}} \xd^{\gg+1}u^qdx+\int_{D_\xb\setminus D_{\xb_1}} \xd^{\gg+1} u^{q-1} |\nabla u|dx\right.\\
&\left.\;\;\;+ \xb_1^{\gg+1}\sup_{ \Gs_{\xb_1}}(\BBK_\mu^\xO[\nu])^q+\int_{\Gs_{\xb}} \xd^{\gg+1}u^qdx\right).
\end{align*}
Observe that for any $\gg \in (\ga-q, \max\{ \frac{\ga-1-q}{2}, 2(\ga-q) +1  \})$, we have
\bel{notzero} |\gg+1|^{-1} < 2|\ga+1-q|^{-1}.\ee
Therefore, for such $\gg$, we can choose $\xb_1=\xb_1(N,q,\mu,\Gw)$ such that
$$C|\gg+1|^{-1}\int_{D_\xb\setminus D_{\xb_1}} \xd^{\gg+1} u^qdx\leq 2C|\ga+1-q|^{-1}\int_{D_\xb\setminus D_{\xb_1}} \xd^{\gg+1} u^qdx  \leq\frac{1}{4}\int_{D_\xb\setminus D_{\xb_1}} \xd^{\gg}u^qdx.$$
Consequently, by H\"{older} inequality we can find a constant $C_1=C_1(N,q,\mu,\Gw)$ such that
\begin{align*}
C|\gg+1|^{-1}\int_{D_\xb\setminus D_{\xb_1}} \xd^{\gg+1}u^{q-1} |\nabla u|dx\leq \frac{1}{4}\int_{D_\xb\setminus D_{\xb_1}} \xd^{\gg}u^qdx+
C_1\int_{D_\xb\setminus D_{\xb_1}} \xd^{\gg+q}|\nabla u|^qdx.
\end{align*}
By the above estimates, there is a positive constant $C_2=C_2(N,\mu,q,\Gw)$ such that
\begin{align} \label{anis2}
\int_{D_\xb\setminus D_{\xb_1}}\gd^{\gg} u^q dx\leq C_2\left(\int_{D_\xb\setminus D_{\xb_1}} \xd^{\gg+q}|\nabla u|^qdx\right.&+\left.\xb_1^{\gg+1}\sup_{\Gs_{\xb_1}}(\BBK_\mu^\xO[\nu])^q+\int_{\Gs_{\xb}} \xd^{\gg+1}u^qdx\right).
\end{align}
By \eqref{apriori1}, Proposition \ref{fragma} and taking into account that $\gg+q-1>\ga-1$, we obtain
$$\int_{\Gs_{\xb}} \xd^{\gg+1}u^qdS\leq C\xb^{\gg+q-1}\int_{\Gs_{\xb}}udS\leq C\xb^{\gg+q-1}\int_{\Gs_{\xb}}\BBK_\mu^\xO[\nu]dS\to 0 \quad \text{as } \gb \to 0.$$
Therefore, by letting $\gb \to 0$ in \eqref{anis2}, we obtain
\bel{anis3}
\int_{\xO_{\xb_1}} \gd^{\gg}u^q dx\leq C_2\left(\int_{\xO_{\xb_1}} \xd^{\gg+q}|\nabla u|^qdx+\xb_1^{\gg+1}\sup_{\Gs_{\xb_1}}(\BBK_\mu^\xO[\nu])^q\right).
\ee
By dominated convergence theorem, we can send $\gg \to \ga-q$ in \eqref{anis3} to obtain
\bel{anis4}
\int_{\xO_{\xb_1}} \gd^{\ga-q}u^q dx\leq C_2\left(\int_{\xO_{\xb_1}} \xd^{\ga}|\nabla u|^qdx+\xb_1^{\gg+1}\sup_{\Gs_{\xb_1}}(\BBK_\mu^\xO[\nu])^q\right).
\ee
This implies \eqref{aniso1}.

The proof of \eqref{aniso1b} follows by similar arguments as the proof of \eqref{aniso1} (with $\gg=\vge-1$) with the some modifications and we omit it.
\end{proof}

Next put $\Gs=\prt \Gw$ and denote by $\Gd_\Gs$ the Laplace-Beltrami operator on $\Gs$.

\begin{proof}[{\bf Proof of Theorem I}]
Let $\xe\geq0$ and $u$ be the solution of \eqref{N1}. If $\eta\in L^\infty(\Gs) \cap B^{\xe+\frac{\xa+1}{q}-\xa,q'}(\Gs)$, we denote by $H:=H[\eta]$ the solution of
 \begin{equation}\label{N12} \left\{  \BAL
\frac{\prt H}{\prt s}+\Gd_\Gs H &=0\qquad&&\text{in }(0,\infty)\times\Gs,\\
H(0,.)&=\eta\qquad&&\text{on }\Gs.
\EAL \right.  \end{equation}
Let $h\in C^\infty(\BBR_+)$ such that $0\leq h\leq 1$, $h'\leq 0$, $h\equiv 1$ on $[0,\frac{\gb_0}{2}]$, $h\equiv 0$ on
$[\gb_0,\infty]$. The lifting we consider is expressed by
  \begin{equation}\label{N13}
R[\eta](x):=\left\{\BA {lll}H[\eta](\gd^2,\gs(x))h(\gd)\qquad&\text{if }x\in \overline\Gw_{\gb_0}\\
0&\text{if }x\in D_{\gb_0},
\EA\right.
\end{equation}
with $x\approx (\gd,\gs)=(\gd(x),\gs(x))$.

\noindent \textbf{Case 1:} $q\neq\xa+1.$ Set $\xe=0$ and $\gz=\vgf_\gm R[\eta]^{q'}$ where $\vgf_\mu$ is the eigenfunction associated to the first eigenvalue $\gl_\mu$ of $-L_\mu$ in $\Gw$ (see Section2.1). By proceeding as the proof of (3.46) in \cite[Lemma 3.8]{GkV}, we obtain
\begin{equation}\label{N10-1}
\left(\int_{\prt\Gw}\eta d\gn\right)^{q'}\leq
\int_{\Gw}|\nabla u|^q\gz dx+\xl_\xm\int_{\Gw}u\gz dx+q'\left(\int_{\Gw} u^q \vgf_\xm^{-\frac{q}{\xa}}\gz dx\right)^{\frac{1}{q}}\left(\int_{\Gw}L[\eta]^{q'}dx\right)^{\frac{1}{q'}},
\end{equation}
where
 \begin{equation}\label{N10-2}
 L[\eta]=\left(2\vgf_\gm^{\frac{1}{\xa}-\frac{1}{q}}|\nabla\vgf_\gm.\nabla R[\eta]|+\vgf_\gm^{1+\frac{1}{\xa}-\frac{1}{q}}|\Gd R[\eta]|\right).
\end{equation}
Following the arguments of the proof of (3.48) in \cite[Lemma 3.9]{GkV} we can obtain
\begin{equation}\label{N10-1'}
 \int_{\Gw} L[\eta]^{q'}dx \leq c\|\eta\|^{q'-1}_{L^\infty(\prt\Gw)}\|\eta\|_{B^{\frac{\xa+1}{q}-\xa,q'}(\prt\Gw)}.
\end{equation}
By Lemma \ref{aniso1} we have
\begin{align}
\int_{\Gw}u^q \vgf_\xm^{-\frac{q}{\xa}}\gz dx\leq C\|\eta\|^{q'}_{L^\infty(\prt\Gw)}\int_{\Gw}\gd^{\ga-q} u^q dx\leq C\|\eta\|^{q'}_{L^\infty(\prt\Gw)}\left(1+\int_{\Gw}|\nabla u|^q\xd^\xa dx\right),\label{anis333}
\end{align}
where the constant $C$ depends on $\xO,\mu$ and $N.$

Combining \eqref{N10-1}, \eqref{N10-1'} and \eqref{anis333} we obtain
\bel{N10-13} \BAL
&\left(\int_{\prt\Gw}\eta d\gn\right)^{q'}\leq
\int_{\Gw}|\nabla u|^q\gz dx+\xl_\xm\int_{\Gw}u\gz dx\\
&\;\;\;\;\;\;\;\;\;\;\;+C\|\eta\|^{\frac{q'}{q}}_{L^\infty(\prt\Gw)}\left(1+\int_{\Gw}|\nabla u|^q\xd^\xa dx\right)^{\frac{1}{q}}\left(\|\eta\|^{q'-1}_{L^\infty(\prt\Gw)}\|\eta\|_{B^{\frac{\xa+1}{q}-\xa,q'}(\prt\Gw)}\right)^{\frac{1}{q'}}.
\EAL \ee

Let $K\subset\prt\Gw$ be a compact set. If $C^{\BBR^{N-1}}_{\frac{\xa+1}{q}-\xa,q'}(K)=0$ then there exists a sequence $\{\eta_n\}$ in $C^2_0(\prt\Gw)$ with the following properties:
  \begin{equation}\label{N20-1}
  0\leq\eta_n\leq 1\,,\; \eta_n=1\; \text{in a neighborhood of } K \text{ and } \lim_{n \to \infty}\eta_n = 0 \text{  in }  B^{\frac{1+\xa}{q}-\xa,q'}(\prt\Gw).
\end{equation}
This implies that $0 \leq R[\eta_n] \leq 1$ and $\lim_{n \to \infty}R[\eta_n]=0$ a.e. in $\Gw$. Put $\gz_n=\vgf_\gm R[\eta_n]^{q'}$. Then
\bel{lum1} \lim_{n \to \infty}\int_{\Gw}|\nabla u|^q \zeta_n dx = 0 \quad \text{and} \quad \lim_{n \to \infty}\int_{\Gw}u \zeta_n dx = 0. \ee
From \eqref{N10-13}--\eqref{lum1}, we obtain
$$ \nu(K) \leq \int_{\prt \Gw}\eta_n d \nu \to 0 \quad \text{as } n \to \infty. $$
This implies that $\nu$ is absolutely continuous with respect to $C^{\BBR^{N-1}}_{\frac{\xa+1}{q}-\xa,q'}$.

\noindent \textbf{Case 2:} $q=\xa+1.$ Let $0<\xe<\xa+1$ and $\gz=\vgf_\gm R[\eta]^{q'}.$ Proceeding as the proof of \eqref{N10-1}, we can prove
\bel{N10-1b} \BAL
\left(\int_{\prt\Gw}\eta d\gn\right)^{\frac{\xa+1}{\xa}}&\leq
\int_{\Gw}|\nabla u|^{\xa+1}\gz dx+\xl_\xm\int_{\Gw}u\gz dx\\
&+\frac{\xa+1}{\xa}\left(\int_{\Gw}u^{\xa+1}\vgf_\xm^{-\frac{\xa+1-\xe}{\xa}}\gz dx\right)^{\frac{1}{\xa+1}}\left(\int_{\Gw}L[\eta]^{\frac{\xa+1}{\xa}}dx\right)^{\frac{\xa}{\xa+1}},
\EAL \ee
where
 \begin{equation}\label{N10-2b}
 L[\eta]=\left(2\vgf_\gm^{\frac{\ga+1-\xe}{\xa q}-\frac{1}{q}}|\nabla\vgf_\gm.\nabla R[\eta]|+\vgf_\gm^{1+\frac{\ga+1-\xe}{\xa q}-\frac{1}{q}}|\Gd R[\eta]|\right).
\end{equation}

Using \eqref{aniso1b} and the ideas of the proof of \eqref{N10-13} we can obtain the following inequality

\bel{N10-13b} \BAL
&\left(\int_{\prt\Gw}\eta d\gn\right)^{\frac{\xa+1}{\xa}}\leq
\int_{\Gw}|\nabla u|^{\xa+1}\gz dx+\xl_\xm\int_{\Gw}u\gz dx\\&
\;\;\;\;\;\;\; + C\|\eta\|^{\frac{1}{\xa}}_{L^\infty(\prt\Gw)}\left(1+\int_{\Gw}|\nabla u|^{\xa+1}\xd^\xa dx\right)^{\frac{1}{\xa+1}}\left(\|\eta\|^{\frac{1}{\xa}}_{L^\infty(\prt\Gw)}\|\eta\|_{B^{\xe+1-\xa,\frac{\xa+1}{\xa}}(\prt\Gw)}\right)^{\frac{\xa}{\xa+1}},
\EAL \ee
where the constant $C$ depends on $N,\mu,\Gw$ and $\vge$.

The rest of the proof follows by using a similar argument as in the first case.
\end{proof}
\begin{proposition} \label{existbdrtr}
Let $u\in C^2(\xO)$ be a positive solution of \eqref{problem}. If $|\nabla u|\in L^q(\xO,\xd^\xa)$
 it possesses a boundary trace $\xn \in \mathfrak{M}(\partial\xO),$ i.e., $u$ is the solution of
the boundary value problem \eqref{N1} with this measure $\xn.$
\end{proposition}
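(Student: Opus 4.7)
The plan is to exhibit $u$ as the difference of a Martin potential of some boundary measure and a Green potential of $|\nabla u|^q$, from which both the existence of the boundary trace and the weak formulation of \eqref{N1} will follow by subtracting the two linear weak identities.

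First I would set $v:=\BBG_\mu^\Gw[|\nabla u|^q]$. The hypothesis $|\nabla u|\in L^q(\Gw,\gd^\ga)$ guarantees that $v$ is well-defined, belongs to $L^1(\Gw,\gd^\ga)$, is nonnegative, satisfies $-L_\mu v=|\nabla u|^q$ in the sense of distributions, and has $\tr(v)=0$ by Proposition \ref{PropA}(i). Put $w:=u+v\ge 0$. Since $u\in C^2(\Gw)$ satisfies $-L_\mu u=-|\nabla u|^q$ pointwise, adding the two distributional equations gives
\[
-L_\mu w=-L_\mu u-L_\mu v=-|\nabla u|^q+|\nabla u|^q=0 \quad\text{in }\Gw,
\]
so $w$ is a nonnegative $L_\mu$-harmonic (in particular superharmonic) function.

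Next I would apply Proposition \ref{PropA}(iii) to $w$ to obtain $(\tau,\gn)\in\GTM^+(\Gw,\gd^\ga)\times\GTM^+(\prt\Gw)$ with $w=\BBG_\mu^\Gw[\tau]+\BBK_\mu^\Gw[\gn]$. This is the delicate step: I must conclude $\tau=0$. Since $-L_\mu\BBK_\mu^\Gw[\gn]=0$ distributionally in $\Gw$, the decomposition forces $-L_\mu w=\tau$ as measures; combined with $-L_\mu w=0$, the uniqueness part of Proposition \ref{PropA}(iv) (applied to the weak linear problem with $\tau=0$, $\nu=0$, whose only solution is $0$) yields $\tau=0$. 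Hence $w=\BBK_\mu^\Gw[\gn]$, that is,
\[
u+\BBG_\mu^\Gw[|\nabla u|^q]=\BBK_\mu^\Gw[\gn].
\]
The estimate \eqref{poi3} then gives $u\in L^1(\Gw,\gd^\ga)$, and linearity of the trace together with $\tr(v)=0$ and $\tr(w)=\nu$ (Proposition \ref{PropA}(i)) delivers $\tr(u)=\gn$.

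To close, for any $\zeta\in\mathbf X_\mu(\Gw)$ I would invoke the linear weak identity \eqref{lweakform} twice, once for $v$ with data $(|\nabla u|^q,0)$ and once for $w$ with data $(0,\gn)$, obtaining
\[
-\int_\Gw v\,L_\mu\zeta\,dx=\int_\Gw|\nabla u|^q\zeta\,dx,\qquad -\int_\Gw w\,L_\mu\zeta\,dx=-\int_\Gw\BBK_\mu^\Gw[\gn]\,L_\mu\zeta\,dx.
\]
Subtracting the first identity from the second and using $u=w-v$ recovers \eqref{intform} with $g(t)=t^q$, which is exactly Definition \ref{weaksol} for \eqref{N1}. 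The main obstacle is the identification $\tau=0$ in the Riesz--Martin decomposition of $w$; everything else is a short bookkeeping exercise once the decomposition $u=w-v$ is set up.
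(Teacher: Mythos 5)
Your proposal is correct and follows essentially the same route as the paper: set $v=\BBG_\mu^\Gw[|\nabla u|^q]$, observe that $u+v$ is a nonnegative $L_\mu$-harmonic function, represent it as $\BBK_\mu^\Gw[\gn]$ via Proposition \ref{PropA}, and read off the boundary trace and the weak formulation. Your extra steps (showing $\tau=0$ in the Riesz--Martin decomposition and subtracting the two linear identities \eqref{lweakform}) are just the details the paper leaves implicit in its appeal to Proposition \ref{PropA}.
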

\begin{proof}
If $v :=  \BBG_\mu^\xO[|\nabla u|^q]$ then $v\in L^1(\xO,\xd^\xa)$ and $u + v$ is a positive $L_{\xm }$ harmonic function.
Hence $u + v \in L^1(\xO,\xd^\xa)$ and there exists a non-negative measure $\xn \in \mathfrak{M}(\partial\xO)$ such
that $u + v =  \BBK_\mu^\xO[\xn].$ By Proposition \ref{PropA} we obtain the result.
\end{proof}

\begin{proof}[{\bf Proof of Theorem J}]
In view of the proof of \cite[Proposition A.2]{GkV} we can obtain the following estimates
 \be
|u(x)|\leq C_0\xd(x)^{\xa}\dist(x,K)^{-\frac{2-q}{q-1}-\xa}\qquad\forall x\in \xO,\label{3.4.24b}
  \ee
  \be
  |\nabla u(x)|\leq C_0\xd(x)^{\xa-1}\dist(x,K)^{-\frac{2-q}{q-1}-\xa}\qquad\forall x\in \xO,\label{3.4.24*b}
  \ee
where $C$ depends on $N,\mu,q,\Gw$ and $\sup _{\Gs_{\xb_0}}u$.

\noindent \textbf{Case 1:} Assume that $q\neq\xa+1$ and $C^{\BBR^{N-1}}_{\frac{\xa+1}{\xa}-\xa,q'}(K)=0$. Then there exists a sequence $\{\eta_n\}$ in $C^2_0(\prt\Gw)$ satisfying \eqref{N20-1}. In particular, there exists a decreasing sequence $\{\CO_n\}$ of relatively open subsets of $\prt\Gw$, containing $K$ such that $\eta_n=1$ on $\CO_n$ and thus $\eta_n=1$ on $K_n:=\overline\CO_n$.
We set $\tilde\eta_n=1-\eta_n$ and $\tilde \gz_n=\vgf_\gm R[\tilde\eta_n]^{2q'}$ where $R$ is defined by \eqref{N13}.
Then $0\leq\tilde\eta_n\leq 1$  and $\tilde\eta_n=0$ on $K_n$. Therefore
 \begin{equation}\label{N22}\BA {lll}
\tilde \gz_n(x)\leq \gf_\gm\min\left\{1, c\gd(x)^{1-N}e^{-(4\gd(x))^{-2}(\dist (x,K^c_n))^2}\right\}.
   \EA\end{equation}
Furthermore
\begin{equation}\label{N22-1}\BAL
&|\nabla R[\tilde\eta_n]|\leq c\min\left\{1, \gd(x)^{-2-N}e^{-(4\gd(x))^{-2}(\dist (x,K^c_n))^2}\right\},\\[2mm]
&|\Gd R[\tilde\eta_n]|\leq c\min\left\{1, \gd(x)^{-4-N}e^{-(4\gd(x))^{-2}(\dist (x,K^c_n))^2}\right\}.
\EAL \end{equation}
Proceeding as the proof of (3.65) in \cite[Theorem 3.10]{GkV} we have
\begin{equation}\label{N23}
\int_{\Gw}(uL_\gm\tilde \gz_n+|\nabla u|^q\tilde\gz_n) dx=0.
\end{equation}
Using the expression of $L_\gm\tilde\gz_n$, we derive from  \eqref{N23} that
\begin{equation}\label{N25}\BAL
 \int_{\Gw}|\nabla u|^q\tilde \gz_ndx&=\int_{\Gw}(-\gl_\gm \vgf_\gm R[\tilde\eta_n]^{2q'}+4q'R[\tilde\eta_n]^{2q'-1}\nabla\vgf_\gm.\nabla R[\tilde\eta_n]\\
&+ 2q'R[\tilde\eta_n]^{2q'-2}\vgf_\gm(R[\tilde\eta_n]\Gd R[\tilde\eta_n] +(2q'-1)|\nabla R[\tilde\eta_n]|^2))u dx \\
& \leq c \left(\int_{\Gw}u^q \vgf_\xm^{-\frac{q}{\xa}} \tilde \gz_n dx \right)^{\frac{1}{q}} \left(\int_{\Gw}\tilde L[\eta_n]^{q'}dx\right)^{\frac{1}{q'}}
 \EAL \end{equation}
where
 \begin{equation}\label{N25+}
\tilde L[\eta]=\vgf_\gm^{\frac{1}{\xa}-\frac{1}{q}}|\nabla\vgf_\gm.\nabla R[\eta_n]|+\vgf_\gm^{1+\frac{1}{\xa}-\frac{1}{q}}|\Gd R[\tilde\eta_n]|+
\vgf_\gm^{1+\frac{1}{\xa}-\frac{1}{q}}|\nabla R[\tilde\eta_n]|^2.
\end{equation}
By proceeding as in the proof of (3.75) in \cite[Theorem 3.10]{GkV} we can prove
\begin{equation}\label{N30}
         \int_{\xO}|\nabla u|^q \vgf_\mu R[\tilde\eta_n]^{2q'} dx
         \leq C\|\eta_n\|_{B^{\frac{\xa+1}{q}-\xa,q'}(\prt \Gw)}\left(\int_{\xO}\gd^{\ga-q} u^q R[\tilde\eta_n]^{2q'}dx\right)^{\frac{1}{q}}.
       \end{equation}
Next we recall that $\xb_0$ is the constant in Proposition \ref{geo}.

\noindent \textbf{Claim: } There exists a positive constant $\xb_1=\xb_1(q,\mu,N,\xO) \in (0,\gb_0)$ such that
\be\label{aniso1g}
\int_{\xO_{\xb_1}}\gd^{\ga-q}u^q R[\tilde\eta_n]^{2q'}dx\leq C\Big(\int_{\xO_{\xb_1}}\xd^\xa|\nabla u|^qdx+1+\|\eta_n\|_{B^{\frac{\xa+1}{q}-\xa,q'}(\prt \Gw)}^{q'}\Big),
\ee
where $C$ depends only on $N,\mu,\Gw$  and $\max(\sup_{ \Gs_{\xb_1}}u,\sup_{\Gs_{\xb_0}}u)$.

Indeed, let $\xb_1 \in (0,\xb_0)$.  By integration by parts, we have
\begin{align*}
&\int_{D_\xb\setminus D_{\xb_1}}\gd^{\ga-q} u^q R[\tilde\eta_n]^{2q'}dx=\frac{1}{\xa+1-q}\int_{D_\xb\setminus D_{\xb_1}}\nabla \xd^{\xa+1-q}\nabla \xd u^q R[\tilde\eta_n]^{2q'}dx\\
&=\frac{1}{\xa+1-q}\int_{D_\xb\setminus D_{\xb_1}}\Big(- \xd^{\xa+1-q} \xD\xd u^q R[\tilde\eta_n]^{2q'}-q \xd^{\xa+1-q}u^{q-1} R[\tilde\eta_n]^{2q'} \nabla\xd\nabla u
\\&\qquad \qquad \qquad\qquad \qquad -(2q'-1) \xd^{\xa+1-q}u^{q} R[\tilde\eta_n]^{2q'-1} \nabla\xd \nabla R[\tilde\eta_n]\Big)dx
\\&\;\;+\frac{1}{\xa+1-q}\Big(\int_{\Gs_{\xb_1}} \xd^{\xa+1-q} \frac{\partial\xd}{\partial \mathbf{n}_{\xb_1}} R[\tilde\eta_n]^{2q'} u^qdS+\int_{\Gs_{\xb}} \xd^{\xa+1-q} \frac{\partial\xd}{\partial \mathbf{n}_{\xb}} R[\tilde\eta_n]^{2q'}u ^qdS\Big)\\
&\leq\frac{C}{|\xa+1-q|}\Big(\int_{D_\xb\setminus D_{\xb_1}} \xd^{\xa+1-q}u^qR[\tilde\eta_n]^{2q'}dx+\int_{D_\xb\setminus D_{\xb_1}} \xd^{\xa+1-q}u^{q-1} |\nabla u|R[\tilde\eta_n]^{2q'}dx\\
&+\int_{D_\xb\setminus D_{\xb_1}} \xd^{\xa+1-q}u^{q} R[\tilde\eta_n]^{2q'-1} |\nabla R[\tilde\eta_n]|dx+\xb_1^{\xa+1-q}\sup_{\Gs_{\xb_1}}u^q+\int_{\Gs_{\xb}} \xd^{\xa+1-q}u^qR[\tilde\eta_n]^{2q'}dS\Big),
\end{align*}
where $C=C(N,\mu,q,\Gw)$. Now we choose $\xb_1$ small enough such that
\begin{align*}
\frac{C}{|\xa+1-q|}\int_{D_\xb\setminus D_{\xb_1}} \xd^{\xa+1-q}u^q R[\tilde\eta_n]^{2q'}dx\leq \frac{1}{16}\int_{D_\xb\setminus D_{\xb_1}} \xd^{\xa-q}u^qR[\tilde\eta_n]^{2q'}dx.
\end{align*}
By H\"{o}lder inequality we have
\begin{align*}
\frac{C}{|\xa+1-q|}\int_{D_\xb\setminus D_{\xb_1}}& \xd^{\xa+1-q}u^{q-1} |\nabla u|R[\tilde\eta_n]^{2q'}dx
\\
&\leq \frac{1}{16}\int_{D_\xb\setminus D_{\xb_1}}\xd^{\xa-q}u^{q}R[\tilde\eta_n]^{2q'}dx+C_1\int_{D_\xb\setminus D_{\xb_1}} \xd^{\xa} |\nabla u|^q R[\tilde\eta_n]^{2q'}dx
\end{align*}
where $C_1=C_1(N,\mu,q,\Gw)$. In view of the proof of  (3.53) in \cite[Lemma 3.9]{GkV}, by \eqref{3.4.24b} and  H\"{o}lder inequality, we obtain
\begin{align*}
&\frac{C}{|\xa+1-q|}\int_{D_\xb\setminus D_{\xb_1}} \xd^{\xa+1-q}u^{q} R[\tilde\eta_n]^{2q'-1} |\nabla R[\tilde\eta_n]|dx\\
&\leq \frac{C C_0}{|\xa+1-q|}\int_{D_\xb\setminus D_{\xb_1}} \xd^{\xa-1} u R[\tilde\eta_n]^{2q'-1} |\nabla R[\tilde\eta_n]|dx\\
&\leq \frac{1}{16}\int_{D_\xb\setminus D_{\xb_1}} \xd^{\xa-q}u^{q} R[\tilde\eta_n]^{2q'}dx + C_2 \int_{D_\xb\setminus D_{\xb_1}} \xd^{\xa} | \nabla R[\eta_n]|^{q'}dx\\
&\leq\frac{1}{16}\int_{D_\xb\setminus D_{\xb_1}} \xd^{\xa-q}u^{q} R[\tilde\eta_n]^{2q'}dx+C_3\|\eta_n\|_{B^{\frac{\xa+1}{q}-\xa,q'}(\prt \Gw)}^{q'}
\end{align*}
where $C_i=C_i(N,\mu,q,\Gw,C_0)$, $i=2,3$. Combining all above we can easily deduce
\begin{equation} \label{ani11} \BAL
&\int_{D_\xb\setminus D_{\xb_1}}\gd^{\ga-q} u^q R[\tilde\eta_n]^{2q'}dx\\
\leq &C\Big(\int_{D_\xb\setminus D_{\xb_1}}\xd^\xa|\nabla u|^qdx+1+\|\eta_n\|_{B^{\frac{\xa+1}{q}-\xa,q'}(\prt \Gw)}^{q'}+\int_{\Gs_{\xb}} \xd^{\xa+1-q}u^qR[\tilde\eta_n]^{2q'}dS\Big),
\EAL \end{equation}
where $C$ depends only on  $N,\mu,q,\Gw$ and $\max(\sup_{ \Gs_{\xb_1}}u,\sup_{ \Gs_{\xb_0}}u)$.

Now by \eqref{3.4.24b} and \eqref{N22}, we have
$$\lim_{\xb\rightarrow0}\int_{\Gs_{\xb}} \xd^{\xa+1-q}u^qR[\tilde\eta_n]^{2q'}dS=0,$$
hence letting $\xb \to 0$ in \eqref{ani11}, we obtain the claim.

Combining \eqref{3.4.24b}, \eqref{N30} and \eqref{aniso1g} leads to
\begin{align*}
\int_{\Gw_{\gb_1}}|\nabla u|^q \ei R[\tilde\eta_n]^{2q'} dx
         \leq C\|\eta_n\|_{B^{\frac{\xa+1}{q}-\xa,q'}(\prt \Gw)}\left(\int_{\Gw_{\gb_1}}\xd^\xa|\nabla u|^qdx+1+\|\eta_n\|_{B^{\frac{\xa+1}{q}-\xa,q'}(\prt \Gw)}^{q'}\right)^{\frac{1}{q}},
\end{align*}
which implies
\begin{align*}
\int_{\Gw_{\gb_1}}|\nabla u|^q(R[\tilde\eta_n])^{2q'}\ei dx
         \leq C\left(\|\eta_n\|_{B^{\frac{\xa+1}{q}-\xa,q'}(\prt \Gw)}^{q'}+\|\eta_n\|_{B^{\frac{\xa+1}{q}-\xa,q'}(\prt \Gw)} +1 \right),
\end{align*}
Letting $n\to\infty$ and using the fact that $\eta_n\to 0$, we obtain by Fatou's lemma that
       $$ \int_{\Gw_{\frac{\gb_1}{2}}}|\nabla u|^q\gf_\mu dx\leq C<\infty.$$
       Combining this with the fact that $|\nabla u|$ is bounded in $D_{\frac{\gb_1}{2}}$ due to \eqref{3.4.24*b},  we assert that $|\nabla u|\in L^q(\xO,\xd^\xa).$ Thus by Proposition \ref{existbdrtr}  there exists a nonnegative Radon measure $\xn$ with support in $K$ such that
       $$u+\BBG_\mu^\xO[|\nabla u|^q]=\BBK_\mu^\xO[\xn].$$
 In light of Theorem I, $\xn\equiv0$ which implies $u=0$ and the result follows in this case.

\noindent \textbf{Case 2:} Assume that $q=\xa+1$ and $C^{\BBR^{N-1}}_{\xe+1-\xa,q'}(K)=0$ for $\vge$ as in statement (ii).  Then we can obtain the desired result by combining the ideas in Case 1 of this theorem and in Case 2 of Theorem I.
\end{proof}

\appendix \section{Barrier}
\setcounter{equation}{0}

In this section we will construct a barrier which plays an important role. This barrier will have the same properties as the barrier in \cite[Proposition 6.1]{GkV}. Let $\gb_0$ be the constant in Proposition \ref{geo}.

\begin{proposition}\label{barr} Let $\Gw\subset\BBR^N$ be a $C^2$ domain, $0<\xm \leq\frac{1}{4}$ and $q>1$. Then for any $z\in \prt\Gw$ and $0<R\leq \frac{\xb_0}{16}$, there exists a super solution $w:=w_{z,R}$ of $(\ref{Epower})$ in $\Gw\cap B_R(z)$ such that $w\in C(\overline\Gw\cap B_R(z))$, $w(x)\to \infty$ when $\dist (x,K)\to 0$, for any compact subset $K\subset \Gw\cap \prt B_R(z)$ and $w$ vanishes on $\prt\Gw\cap B_R(z)$. More precisely
\begin{equation}\label{BAR1}
w(x)= \left\{\BAL &c(R^2-|x-z|^2)^{-b}\gd(x)^{\gamma} \qquad\forall \gamma\in (1-\xa,\xa)\,&&\text{ if }0<\gm<\frac{1}{4}\\
&c(R^2-|x-z|^2)^{-b}\gd(x)^{\frac{1}{2}}\left(\ln \frac{{\rm {diam}}(\Gw)}{\gd(x)}\right)^{\frac{1}{2}}  &&\text{ if }\gm=\frac{1}{4}
\EAL \right.
\end{equation}
where $b\geq \max\{\frac{4-q}{q-1}+\gg,\frac{N-2}{2},1\}$ and $c=c(N,\mu,q,b,\gg)$.
\end{proposition}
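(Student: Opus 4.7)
The plan is to take the explicit candidate $w$ in \eqref{BAR1} and verify directly that it is a supersolution of \eqref{Epower}. Write $w = c\eta^{-b}\psi$, with $\eta(x) := R^2 - |x-z|^2$ and $\psi$ the $\delta$-dependent factor. The boundary behavior is built in: since $R \leq \beta_0/16$ is small, $\eta$ stays bounded below in a neighborhood of $\partial\Omega \cap B_R(z)$, so $\psi \to 0$ forces $w \to 0$ there; conversely, on any compact $K \subset \Omega \cap \partial B_R(z)$, $\delta$ is bounded below while $\eta \to 0$, so $w \to \infty$.

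The substance is the differential inequality $-L_\mu w + |\nabla w|^q \geq 0$. A direct expansion using $\nabla\eta = -2(x-z)$, $\Delta\eta = -2N$ and the regularity $|\nabla\delta|=1$, $\delta \in C^2(\overline{\Omega_{\beta_0}})$ from Proposition 3.1 yields
\begin{equation*}
-L_\mu w = c\eta^{-b}(-\Delta\psi - \mu\delta^{-2}\psi) - 4cb(b+1)\eta^{-b-2}|x-z|^2\psi - 2cbN\eta^{-b-1}\psi - 4cb\eta^{-b-1}(x-z)\cdot\nabla\psi.
\end{equation*}
The first term is the \emph{reservoir}: for $\gamma \in (1-\alpha,\alpha)$ one has $\gamma(1-\gamma) - \mu > 0$, so
\begin{equation*}
-\Delta\psi - \mu\delta^{-2}\psi = [\gamma(1-\gamma)-\mu]\delta^{\gamma-2} - \gamma\delta^{\gamma-1}\Delta\delta \geq \tfrac{1}{2}[\gamma(1-\gamma)-\mu]\delta^{\gamma-2}
\end{equation*}
for $\delta$ small. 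At $\mu = 1/4$, where the indicial equation is degenerate, the log-corrected $\psi$ gives by the same scheme an analogous positive lower bound of order $\delta^{-3/2}(\ln(\mathrm{diam}(\Omega)/\delta))^{-3/2}$. The other three terms are negative or sign-indefinite of sizes $\eta^{-b-2}|x-z|^2\delta^\gamma$, $\eta^{-b-1}\delta^\gamma$ and $\eta^{-b-1}|x-z|\delta^{\gamma-1}$. For the nonlinear term, $\nabla w = 2cb\eta^{-b-1}(x-z)\psi + c\eta^{-b}\nabla\psi$, and in the sector where the first summand dominates (e.g.\ $2b|x-z|\delta \geq \gamma\eta$) one has $|\nabla w|^q \geq C_0 c^q b^q\eta^{-(b+1)q}|x-z|^q\delta^{q\gamma}$.

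The verification then splits into three regions. \emph{(i)} Near $\partial B_R(z)$ with $\delta$ bounded below: $|\nabla w|^q$ absorbs the bad $\eta^{-b-2}|x-z|^2$ contribution provided $(b+1)q \geq b+2$, i.e.\ $b \geq (2-q)/(q-1)$. \emph{(ii)} Near $\partial\Omega$ with $\eta$ bounded below: the reservoir $\eta^{-b}\delta^{\gamma-2}$ dominates each of the other terms, which carry at most $\delta^{\gamma-1}$. \emph{(iii)} The corner $\partial\Omega \cap \partial B_R(z)$, where both $\eta$ and $\delta$ vanish together and all terms become of comparable critical order: here the stated $b \geq \gamma + (4-q)/(q-1)$, together with the freedom to choose $c$ large, is exactly what forces the sum of reservoir and nonlinear term to strictly beat the negative ones; the dimensional lower bound $b \geq \max\{(N-2)/2, 1\}$ absorbs the subleading $\eta^{-b-1}$ contributions.

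The principal obstacle is the corner analysis in regime \emph{(iii)}: exponent matching must be executed carefully, and for $\mu = 1/4$ one additionally tracks the logarithmic factor through each term (it is always of lower order than the leading algebraic factor in its term, so the scheme survives with minor book-keeping). Once positivity has been checked in each region, choosing $c = c(N,\mu,q,b,\gamma)$ large enough absorbs the remaining geometric constants (depending on $\beta_0$ and $\mathrm{diam}(\Omega)$), and $w$ is the required supersolution.
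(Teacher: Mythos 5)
Your overall plan---verify the explicit candidate by expanding $-L_\mu w + |\nabla w|^q$ and absorbing bad terms with a reservoir plus the nonlinear term---is reasonable, and the computation of the reservoir $[\gamma(1-\gamma)-\mu]\delta^{\gamma-2}$ is correct (note $\gamma(1-\gamma)>\mu$ for $\gamma\in(1-\alpha,\alpha)$ since $\alpha,1-\alpha$ are the two roots of $t(1-t)=\mu$). But there is a genuine gap in your control of $|\nabla w|^q$, and it is exactly the step the paper's proof is built around.

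Writing $\nabla w = c\eta^{-b-1}\delta^{\gamma-1}\bigl[2b(x-z)\delta + \gamma\eta\nabla\delta\bigr]$, the two vectors $2b(x-z)\delta$ and $\gamma\eta\nabla\delta$ are not controlled in direction, and they can be nearly anti-parallel. If $(x-z)\cdot\nabla\delta(x)$ were allowed to approach $-|x-z|$, then precisely in the transition zone $2b|x-z|\delta\approx\gamma\eta$ between your two sectors, the bracket (and hence $|\nabla w|$) would tend to zero, and the nonlinear term would vanish while the negative contributions to $-L_\mu w$ do not. Your claimed bound $|\nabla w|^q \gtrsim \eta^{-(b+1)q}|x-z|^q\delta^{q\gamma}$ in the sector $2b|x-z|\delta\geq\gamma\eta$ is therefore unjustified: dominance of moduli does not prevent cancellation. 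The paper closes this gap with a geometric lemma derived from Proposition \ref{geo} (the interior ball condition at the $C^2$ boundary): for $z\in\prt\Gw$ and $x\in\Gw\cap B_R(z)$ with $R\leq\beta_0/16$, one has $(x-z)\cdot\nabla\delta(x)\geq -\tfrac{1}{3}|x-z|$. This yields, by Young's inequality, the uniform coercive bound
\begin{equation*}
\bigl|2b(x-z)\delta + \gamma\eta\nabla\delta\bigr|^2 \;\geq\; \tfrac{2}{3}\bigl(\gamma^2\eta^2 + 4b^2\delta^2|x-z|^2\bigr) \;\geq\; \tfrac{2}{3}\gamma^2\eta^2,
\end{equation*}
hence $|\nabla w|^q \gtrsim \Lambda^q\eta^{-qb}\delta^{q(\gamma-1)}$ \emph{everywhere} in $\Gw\cap B_R(z)$. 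That single global lower bound (not a sector-by-sector one) is what then absorbs the bad terms, and it is stronger than yours near $\prt\Gw$: it blows up like $\delta^{q(\gamma-1)}$ rather than staying of size $\delta^{q\gamma}$, which matters because the worst negative term near $\prt\Gw$ when $\gamma$ is close to $1-\alpha$ is of order $\delta^{\gamma-2}$.

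Secondly, you explicitly leave region (iii), which you yourself identify as "the principal obstacle," unverified. After obtaining the coercive gradient bound, the paper delegates the remaining exponent bookkeeping to the proof of \cite[Proposition 6.1]{GkV}, and your sketch of (iii) is a placeholder rather than an argument. To repair the proposal: invoke Proposition \ref{geo}, prove $(x-z)\cdot\nabla\delta(x)\geq-\tfrac{1}{3}|x-z|$ on $\Gw\cap B_R(z)$, derive the global lower bound $|\nabla w|^q\gtrsim\Lambda^q(R^2-|x-z|^2)^{-qb}\delta^{q(\gamma-1)}$, and then the region analysis (and the $\mu=\tfrac14$ logarithmic variant) can be carried out cleanly against a single competitor.
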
  
\begin{proof} Take $z \in \prt \Gw$ and $R \in (0,\frac{\gb_0}{16}]$. Without loss of generality, we may  assume  $z=0$.

\noindent \textbf{Case 1:} $\gm <\frac{1}{4}$. Set
$$ w(x)=\Gl(R^2-| x|^2)^{-b} \gd(x)^{\gamma} \quad x \in \Gw \cap B_R(0), $$
where $b,\gamma>0$ to be chosen later on. By straightforward calculation we have
\be\label{ineq111}\BAL
\xL^{-q}|\nabla w|^q &=\xL^{-q}(|\nabla w|^2)^\frac{q}{2} =\gd^{q(\xg-1)} (R^2-|x|^2)^{-q(b+1)} \times \\
& \times\left(\xg^2(R^2-|x|^2)^2+4b^2\gd^2|x|^2+4b \gg \gd (R^2-|x|^2) x\cdot\nabla \gd \right)^{\frac{q}{2}}.
\EAL \ee

Now let $x_0=x+(\frac{\xb_0}{4}-\gd(x))\nabla \gd(x)$ and $\xi_x$ be the unique point in $\partial\xO$ such that $\gd(x)=|x-\xi_x|.$ Then $\gd(x_0)=|x_0-\xi_x|=\frac{\xb_0}{4},$
$$B(x_0,\frac{\xb_0}{4})\subset\xO\quad\text{and}\quad \overline{B}(x_0,\frac{\xb_0}{4})\cap \partial\xO=\{\xi_x\}.$$
Furthermore
$$\nabla \gd(x)=\frac{x_0-x}{|x-x_0|}.$$
We have
$$ \BAL |x-x_0|^2\leq \frac{\xb_0^2}{16}\Longleftrightarrow |x|^2+|x_0|^2-2x\cdot x_0\leq \frac{\xb_0^2}{16}\Longleftrightarrow|x|^2+|x_0|^2-\frac{\xb_0^2}{16}\leq 2x\cdot x_0.
\EAL $$
Since $|x_0|^2\geq \frac{\xb_0^2}{16}$, it follows from the last inequality that
$x\cdot x_0\geq0$. Therefore
\be
x\cdot\nabla \gd(x)=\frac{x\cdot x_0-|x|^2}{|x-x_0|}\geq-\frac{|x|^2}{|x-x_0|}\geq -\frac{|x|}{3},\label{inq11}
\ee
where in the above inequality we have used the fact that $|x|\leq\frac{\xb_0}{16}$ and $|x-x_0|\geq\frac{3\xb_0}{16}$. By \eqref{inq11} and H\"{o}lder inequality we have
\bel{inq1111} \BAL
&\xg^2(R^2-|x|^2)^2+4b^2\gd(x)^2|x|^2+4b\xg (R^2-|x|^2) \gd(x) x\cdot\nabla \gd(x)\\
&\geq \frac{2}{3}\left(\xg^2(R^2-|x|^2)^2+4b^2\gd(x)^2|x|^2\right)\geq \frac{2\xg^2}{3}(R^2-|x|^2)^2.
\EAL \ee
Combining \eqref{inq1111} and \eqref{ineq111} we have
\be
\xL^{-q}|\nabla w(x)|^q\geq \left(\frac{2\xg^2}{3}\right)^{\frac{q}{2}} (R^2-|x|^2)^{-qb}  \gd(x)^{q(\xg-1)}.
\ee
The rest proof is similar to the proof of \cite[Proposition 6.1]{GkV} and we omit it.
 \medskip

\noindent \textbf{Case 2:} $\xm =\frac{1}{4}$. Set
$$ w(x):=\Gl(R^2-|x|^2)^{-b}\gd^{\frac{1}{2}}\left(\ln \frac{e^2R}{\gd}\right)^\frac{1}{2} \quad x \in \Gw \cap B_R,$$
for some $\Gl,b$ to be made precise later on. It is easy to calculate
\bel{fe5} \nabla w = (R^2-r^2)^{-b-1}\gd^{-\frac{1}{2}}  \Big(\ln \frac{e^2R}{\gd} \Big)^{\frac{1}{2}} \Big[ \frac{1}{2}(R^2-r^2) \Big(1-\Big(\ln\frac{e^2R}{\gd}\Big)^{-1}\Big)\nabla \gd+2b\gd x \Big].
\ee
By \eqref{inq11} and H\"{o}lder inequality, we have
$$\BAL
&\Big|\frac{1}{2}(R^2-r^2)\Big(1-\Big(\ln\frac{e^2R}{\gd}\Big)^{-1}\Big)\nabla \gd+2b\gd x\Big|^2\\
&=\frac{1}{4}\Big[(R^2-r^2)\Big(1-\Big(\ln\frac{e^2R}{\gd}\Big)^{-1}\Big)\Big]^2
+4b^2\gd^2|x|^2+2b(R^2-r^2)\Big(1-\Big(\ln\frac{e^2R}{\gd}\Big)^{-1}\Big)\gd x \cdot \nabla \gd\\
&\geq \frac{1}{4}\Big[(R^2-r^2)\Big(1-\Big(\ln\frac{e^2R}{\gd}\Big)^{-1}\Big)\Big]^2+4b^2\gd^2|x|^2
-\frac{2}{3}b(R^2-r^2)\Big(1-\Big(\ln\frac{e^2R}{\gd}\Big)^{-1}\Big)\gd |x|\\
&\geq \frac{1}{12}\Big[(R^2-r^2)\Big(1-\Big(\ln\frac{e^2R}{\gd}\Big)^{-1}\Big)\Big]^2\\
&\geq \frac{1}{48}(R^2-r^2)^2.
\EAL
$$
Hence,
\be
\xL^{-q}|\nabla w|^q\geq \left(\frac{1}{48}\right)^{\frac{q}{2}} (R^2-r^2)^{-qb} \gd^{-\frac{q}{2}}\Big(\ln\frac{e^2R}{\gd}\Big)^{\frac{q}{2}} .
\ee
Proceeding as the proof of \cite[Proposition 6.1]{GkV}, we obtain the desired result.
\end{proof}

\end{document}